\documentclass[12pt,reqno]{amsart}
\usepackage[utf8]{inputenc}
\usepackage{a4wide}
\numberwithin{equation}{section}
\usepackage{mathrsfs}
\usepackage{stmaryrd}
\usepackage{url}
\usepackage{amsfonts}
\usepackage{amscd}
\usepackage{indentfirst}
\usepackage{enumerate}
\usepackage{amsmath,amsfonts,amssymb,amsthm,mathtools}
\usepackage{graphicx,mathrsfs}
\usepackage{appendix}
\usepackage{enumitem}
\usepackage[numbers,sort&compress]{natbib}
\usepackage{color}
 \usepackage[colorlinks, linkcolor=blue, citecolor=blue]{hyperref}
\usepackage{float}

\newcommand{\dist}{\operatorname{dist}}

\newcommand\R{\mathbb R}

\newcommand\mbb\mathbb
\newcommand\mbf\mathbf
\newcommand\mcal\mathcal
\newcommand\mfrak\mathfrak
\newcommand\mrm\mathrm
\newcommand\msf\mathsf
\renewcommand\a\alpha
\renewcommand\b\beta
\newcommand\g\gamma
\newcommand\G\Gamma
\renewcommand\d\delta
\newcommand\D\Delta
\newcommand\e\epsilon
\newcommand\z\zeta
\renewcommand\t\theta
\newcommand\Th\Theta
\newcommand\la\lambda
\newcommand\La\Lambda
\newcommand\s\sigma
\newcommand\si\varsigma
\newcommand\Si\Sigma
\newcommand\ups\upsilon
\newcommand\U\Upsilon
\newcommand\ph\varphi
\renewcommand\o\omega
\renewcommand\O\Omega
\newcommand\wt\widetilde
\newcommand\wh\widehat
\newcommand\ol\overline
\newcommand\ul\underline
\newcommand\mr\mathring
\newcommand\ub\underbrace
\newcommand\pa\partial
\newcommand\n\nabla
\newcommand\fa\forall
\newcommand\ex\exists
\newcommand\es\emptyset
\newcommand\wk\rightharpoonup
\newcommand\inc\hookrightarrow
\newcommand\linf\varliminf
\newcommand\lsup\varlimsup
\newcommand\os\overset
\newcommand\us\underset
\newcommand\sr\stackrel
\newcommand\Ot\Leftarrow
\newcommand\To\Rightarrow
\newcommand\map\mapsto
\newcommand\ot\leftarrow
\newcommand\lot\longleftarrow
\newcommand\lto\longrightarrow
\newcommand\tot\leftrightarrow
\newcommand\ltot\longleftrightarrow
\newcommand\sm\backslash
\renewcommand\Cup\bigcup
\renewcommand\Cap\bigcap
\newcommand\sub\subset
\newcommand\Sub\Subset
\newcommand\sne\subsetneq
\newcommand\bus\supset
\newcommand\Bus\Supset

\newcommand\eq\equiv
\newcommand\ox\otimes
\newcommand\Ox\bigotimes
\newcommand\pl\oplus
\newcommand\Pl\bigoplus
\newcommand\x\times
\renewcommand\c\circ
\newcommand\q\quad
\newcommand\fr\frac

\usepackage{graphicx}
\usepackage{subcaption}

\usepackage[numbers,sort&compress]{natbib}
\usepackage[dvipsnames]{xcolor}
\usepackage{color}

\definecolor{bondiblue}{rgb}{0.0, 0.58, 0.71}
\def\sideremark#1{\ifvmode\leavevmode\fi\vadjust{\vbox to0pt{\vss% the remark
			\hbox to 0pt{\hskip\hsize\hskip1em%                          will appear only
				\vbox{\hsize2.1cm\tiny\raggedright\pretolerance10000%          on the side
					\noindent #1\hfill}\hss}\vbox to15pt{\vfil}\vss}}}%

\newtheorem{Thm}{Theorem}[section]
\newtheorem{Lem}[Thm]{Lemma}
\newtheorem{Cor}[Thm]{Corollary}
\newtheorem{Prop}[Thm]{Proposition}

\newtheorem{Def}[Thm]{Definition}
\newtheorem{Rem}[Thm]{Remark}

\newcommand{\eps}{\epsilon}
\newcommand{\dd}{\mathop{}\!\mathrm{d}}
\newcommand{\lammax}{\lambda_{\max}}

\DeclareMathOperator{\supp}{supp}

\title[Hessian degeneracy of the torsion function]
{Hessian degeneracy of the torsion function on smooth simply connected nonconvex planar domains}

\author{Xiuda Liang}
\address[Xiuda Liang]{School of Mathematics and Statistics, Central China Normal University, Wuhan 430079, China}
\email{lxddd@mails.ccnu.edu.cn}

\author{Peng Luo}
\address[Peng Luo]{School of Mathematics and Statistics, Key Laboratory of Nonlinear Analysis and Applications (Ministry of Education), and Hubei Key Laboratory of Mathematical Sciences, Central China Normal University, Wuhan 430079, China}
\email{pluo@ccnu.edu.cn}

\author{Wenjie Wang}
\address[Wenjie Wang]{School of Mathematics and Statistics, Central China Normal University, Wuhan 430079, China}
\email{wjwang3269@mails.ccnu.edu.cn}

\begin{document}

\begin{abstract}
We construct a family of bounded, smooth, simply connected, nonconvex planar domains $\Omega_{a,\epsilon}$. Let $u_{a,\epsilon}$ be  the corresponding torsion function satisfying
\[
        -\Delta u_{a,\epsilon}=1\quad\text{in }\Omega_{a,\epsilon},\qquad u_{a,\epsilon}=0\quad\text{on }\partial\Omega_{a,\epsilon}.
\]  There exists \(a^*\in(0,1)\) such that, for every \(a\in(a^*,1)\) and all sufficiently small \(\epsilon>0\), the origin is a unique global maximizer of \(u_{a,\epsilon}\). Moreover, 
\[ \lim_{a\downarrow a^*}\lim_{\epsilon\to0} \lambda_{\max}\bigl(D^2u_{a,\epsilon}(0)\bigr)=0. \]
In addition, the ratios $\text{diam}(\Omega_{a,\epsilon}) / \text{inrad}(\Omega_{a,\epsilon})$ are uniformly bounded.  Hence the Hessian estimate proved by Steinerberger (J. Funct. Anal. 274, 1611--1630, 2018) for convex planar domains cannot be extended to  smooth, simply connected, nonconvex planar domains.
\vskip0.2cm
 Our domains are star-shaped, symmetric with respect to both coordinate axes and convex in the horizontal direction. When the limiting slit is sufficiently long,  we  further show that certain superlevel sets of the torsion function are not star-shaped. This strengthens the counterexample to the star-shapedness question raised by Gladiali and Grossi (Amer. J. Math. 144, 1221--1240, 2022) under stronger geometric 
assumptions. 
\end{abstract}

\maketitle
{\small
\keywords {\noindent {\bf Keywords:} {\small 
Torsion function; Hessian estimate; simply connected nonconvex domain; Green function; Mosco convergence.
}
\smallskip
\newline
\subjclass{\noindent {\bf 2020 Mathematics Subject Classification:} Primary 35J25; Secondary 35J05, 35B25, 35J20.}
}

\section{Introduction and main results}
\setcounter{equation}{0}
In this paper, we study the classical torsion problem
 \begin{equation}\label{1h}
\begin{cases}
-\Delta u=1~&\mbox{in}\ \Omega,\\[1mm]
\quad \; \;u=0~&\mbox{on}\ \partial \Omega.
\end{cases}
\end{equation}
 The torsion problem \eqref{1h} originates in Saint-Venant's theory of elastic torsion and has become a classical model in elliptic PDE and geometric analysis, particularly in the study of maximum points, concavity, and the geometry of level sets.
Several qualitative properties of the torsion function on convex planar domains are well understood. A classical theorem of Makar-Limanov \cite{ML71} shows that  \(\sqrt{u}\) is concave on planar convex domains, which implies the convexity of its superlevel sets. Let \(y_0\) denote the unique maximizer and let \(\lambda_1, \lambda_2\) be the eigenvalues of the Hessian \(D^2u(y_0)\). Since \(-\Delta u=1\), we have
\[
\lambda_1+\lambda_2=\operatorname{tr} D^2u(y_0)=\Delta u(y_0)=-1.
\]
The eccentricity of the level sets near \(y_0\) is governed by these eigenvalues. If one eigenvalue is close to zero, the level sets become highly flattened and eccentric. However, Steinerberger \cite{Ste18} showed that such extreme eccentricity cannot occur when $\O$ is convex. In general, the behavior of critical points of torsion functions in nonconvex domains has attracted considerable attention. In particular, Gladiali and Grossi  \cite{GG2022} constructed a family of star-shaped simply connected domains on which the torsion function possesses an arbitrarily large number of maximum points. For further properties of the torsion function, see \cite{KM1993,HNST,K1987,B2020,HS2021,HNS2016}.
 \vskip 0.2cm
 
More generally,  related questions arise for the semilinear Dirichlet problem
 \begin{equation}\label{f(u)}
\begin{cases}
-\Delta u=f(u)~&\mbox{in}\ \Omega,\\[1mm]
\quad \; \; u=0~&\mbox{on}\ \partial \Omega.
\end{cases}
\end{equation}
 A well-known theorem of Gidas, Ni and Nirenberg in
 \cite{GNN1979}, proved by the moving-plane method,  implies that the superlevel sets of the solution $u$ of \eqref{f(u)} are convex, i.e., $u$ is quasiconcave, when $\O$ is a disk. Lions \cite{lions} conjectured in 1981 that  $u$ should be quasiconcave for general  $f$, and 
the question of quasiconcavity for \eqref{f(u)} has received renewed interest.  Recently, Steinerberger \cite{Steinerberger} established that under certain conditions on $f$, the concavity of $u$ propagates from the boundary to the interior. Chau and Weinkove \cite{weinkove2023} subsequently obtained related concavity results under weaker assumptions on $f$ and provided a geometric criterion involving ellipses.
 Related results can be found in \cite{BL1976,GG2004-1,GG2004-2,K1985, kennington1985power,CF1985,CS1982,CC1998,Grossi2024}.

\vskip 0.2cm

  Our starting point is Steinerberger's Hessian estimate
 in \cite{Ste18}, which we recall next.

\vskip 0.2cm

\noindent \textbf{Theorem A (Steinerberger \cite{Ste18}).} \emph{Let $\Omega\subset \R^2$ be a bounded  convex domain and $u_0(x)$ be the solution of problem \eqref{1h} with  maximizer $y_0\in \Omega$. Then there exist universal constants $c_1, c_2>0$ such that}
\begin{equation*}
\lambda_{\max}\left(D^2u_0(y_0)\right)\leq -c_1\mbox{exp}\left(-c_2\frac{\text{diam}(\Omega)}{\mbox{inrad}(\Omega)}\right).
\end{equation*}

\vskip 0.2cm

In \cite{Ste18}, the proof of Theorem A relies essentially on the convexity of the domain. Steinerberger therefore  proposed the following open problem: \textbf{\textit{``Does the above result hold true on domains that are not convex but merely simply connected or perhaps only bounded? The proof uses convexity of the domain $\Omega$ in a very essential way and it is not clear to us whether the statement remains valid in other settings."}}

 \vskip 0.2cm
Chen and Luo  \cite{C-L} constructed counterexamples by removing a small interior disk from a convex domain. Let $\Omega_\epsilon=\O\backslash B(x_0,\epsilon)$, where $\Omega\subset \R^2$ is  bounded and convex, $x_0\in\O$ and $B(x_0,\epsilon)$ denotes the ball centered at $x_0$ with radius $\epsilon$. Suppose $y_0$ is the maximizer of $u_0(x)$ as in Theorem A, $u_\e$ is the torsion function of $\Omega_\epsilon$, and $x_{\epsilon}\in\Omega_{\epsilon}$ is the maximizer of $u_{\epsilon}$. If $\lambda_1$ and $\lambda_2$ are the eigenvalues of $D^2u_0(y_0)$, then
\begin{equation*}
 \lim_{\e\to 0}\lambda_{\max}\big(D^2u_\e(x_\e)\big)=
 \begin{cases}
 \max\big\{\lambda_1,\lambda_2\big\} &\mbox{if} ~x_0\neq y_0,\\[2mm]
 \max\big\{\lambda_1,\lambda_2,-|\lambda_2-\lambda_1|\big\} &\mbox{if}~x_0= y_0.
 \end{cases}
\end{equation*}
Their construction shows that convexity is essential. \textbf{However, it destroys both convexity and simple connectivity by introducing an interior boundary component}. In their construction, the degeneration of the Hessian spectral gap is driven by the topological change.  \textbf{Thus their construction does not address the simply connected alternative explicitly raised in Steinerberger's question}. To the best of our knowledge, this case has
remained open. By contrast, the domains we construct are smooth, simply connected and nonconvex, and the Hessian degeneracy is produced by
the geometry of the outer boundary.
\begin{figure}[H]
    \centering
    \begin{subfigure}[b]{0.45\textwidth}
        \centering
        \includegraphics[width=0.6\linewidth]{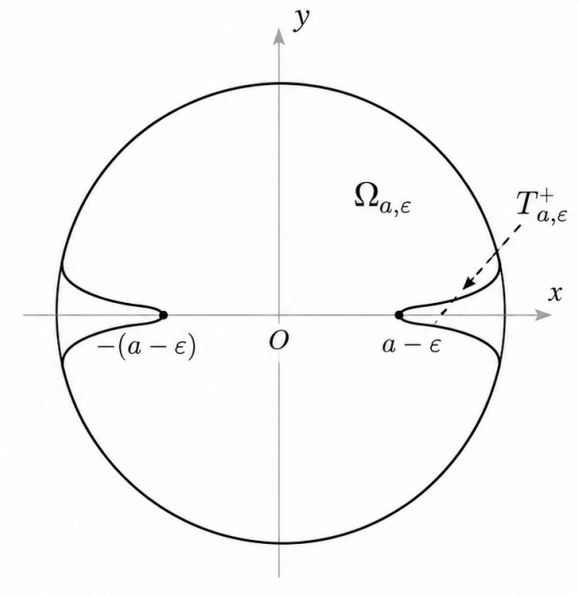}
        \caption{$\Omega= \Omega_{a,\epsilon}$}
        \label{fig:sub1}
    \end{subfigure}
    \hfill
    \begin{subfigure}[b]{0.45\textwidth}
        \centering
        \includegraphics[width=0.6\linewidth]{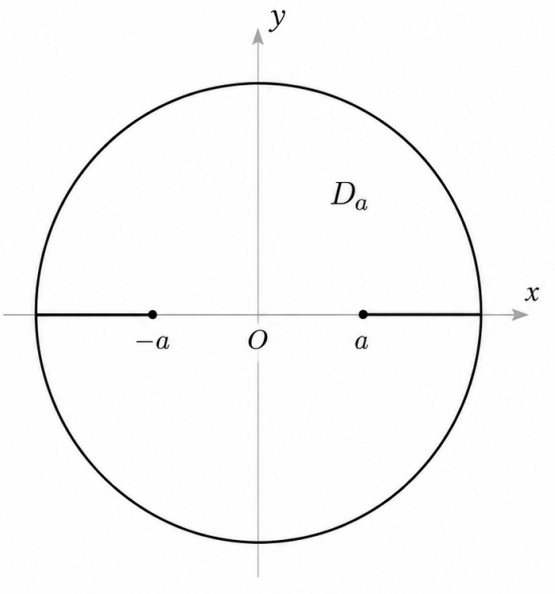}
        \caption{$\O = D_a$} 
        \label{fig:sub2}
    \end{subfigure}
\end{figure}

 We now introduce the family of domains and consider the corresponding torsion problem
\begin{equation}\label{fc1.1}
\begin{cases}
-\Delta u=1  &{\text{in}~\Omega_{a,\epsilon}},\\[0.5mm]
\quad \; \; u=0 &{\text{on}~\partial \Omega_{a,\epsilon}},
\end{cases}
\end{equation}
where $a\in (0,1)$,   
$\Omega_{a,\epsilon}=
B_1(0)\setminus \overline{T_{a,\epsilon}}$ and $T_{a,\epsilon}$ denotes the open slot, with
$\overline{T_{a,\epsilon}}
=T_{a,\epsilon}\cup\partial T_{a,\epsilon}$
(see Figure~\subref{fig:sub1}). We further assume that \begin{equation}\label{quyuomiga}
    \displaystyle \Omega_{a,\eps}
=
\big\{
(x,y)\in B_1(0):
|y|<1,~
-\rho_{a,\eps}(y)<x<\rho_{a,\eps}(y)
\big\},
\end{equation}
where $\displaystyle \rho_{a,\epsilon}
\in C^\infty\big((-1,1)\big)\cap C\big([-1,1]\big)$ is even and satisfies $\displaystyle 0<\rho_{a,\epsilon}(y)\leq\sqrt{1-y^2}$ for $|y|<1$, and $\rho_{a,\epsilon}(\pm1)=0$.  We assume in addition that the family $\Omega_{a,\eps}$ satisfies the following properties: 
\begin{enumerate}[label=(P\arabic*), ref=(P\arabic*)]
    \item \label{C1}
    \(\Omega_{a,\epsilon}\) is a bounded, \(C^\infty\),
    simply connected, nonconvex planar domain and is symmetric with respect to both coordinate axes;

    \item \label{C2}
    \(\Omega_{a,\epsilon}\subset D_a\), where 
    \(D_a := B_1(0)\setminus {S_a}\) and 
    \(S_a:=\bigl((-1,-a]\cup[a,1)\bigr)\times\{0\}\) (see Figure \subref{fig:sub2});
 \item \label{C3}
    In the Hausdorff distance,  
    $  d_{{H}}\!\left(\overline{{T_{a,\epsilon}}},\overline{S_a} \right) \to 0
    ~ \text{as }\epsilon\to0$, where $\overline{S_a}
=
\bigl([-1,-a]\cup[a,1]\bigr)\times\{0\}$.
 \item \label{C4}
    There exists $\e_0(a)>0$ such that, whenever \(0<\epsilon_1<\epsilon_2<\epsilon_0(a)\), 
    one has \(\Omega_{a,\epsilon_2}\subset \Omega_{a,\epsilon_1}\), and $\displaystyle \bigcup_{0<\epsilon<\epsilon_0(a)} \Omega_{a,\epsilon} = D_a$. 
    \item \label{C5}$\Omega_{a,\epsilon}$ is star-shaped with respect
to the origin, that is,
$
 t\Omega_{a,\epsilon}\subset\Omega_{a,\epsilon}
 ~\text{for every }t\in[0,1].
$
\item \label{C6} $\Omega_{a,\epsilon}$ is convex in the $x$-direction, namely,
for any $y\in(-1,1)$, if $(x_1,y),(x_2,y)\in\Omega_{a,\epsilon}$,
then $\bigl((1-t)x_1+tx_2,y\bigr)\in\Omega_{a,\epsilon}$ for any $t\in[0,1]$. 
\end{enumerate}
 Observe that \ref{C6} follows immediately from \eqref{quyuomiga}. Explicit examples satisfying \eqref{quyuomiga}  and \ref{C1}--\ref{C6} are constructed in Appendix \ref{A}. The next theorem implies that the estimate in Theorem A does not 
extend, with universal constants, to 
smooth simply connected 
nonconvex domains.

\begin{Thm}\label{Thm1.1}
Let $u_{a,\epsilon}$ be the solution of \eqref{fc1.1}.
There exists $a^*\in(0,1)$ such that, for every $a\in(a^*,1)$,
we can find $\epsilon(a)>0$ such that, for every
$0<\epsilon<\epsilon(a)$, the origin is a unique global maximizer
of $u_{a,\epsilon}$ in $\Omega_{a,\epsilon}$. Define
\[
\Lambda_{\epsilon}(a)=\lambda_{\max}\bigl(D^{2}u_{a,\epsilon}(0)\bigr).
\]
Then
\[
\lim_{a\downarrow a^{*}}\lim_{\epsilon\to0}\Lambda_{\epsilon}(a)=0.
\]
Moreover, the ratios $\text{diam}(\Omega_{a,\epsilon}) / \text{inrad}(\Omega_{a,\epsilon})$ are uniformly bounded. More precisely,
\begin{equation}\label{inrad}
    \sup\limits_{\substack{a\in(0,1)\\
0<\epsilon<\epsilon_0(a)}}
\displaystyle\frac{\operatorname{diam}(\Omega_{a,\epsilon})}
{\operatorname{inrad}(\Omega_{a,\epsilon})}
\leq 8.
\end{equation}

\end{Thm}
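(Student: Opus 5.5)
Since $\Omega_{a,\epsilon}\to D_a$ monotonically (P4), I will first pass to the limit in $\epsilon$ and then analyze $a\mapsto u_a$, the torsion function of the slit disk $D_a$. With $u_{a,\epsilon}$ extended by zero, the functions are pointwise nondecreasing in $\epsilon\downarrow0$ by the maximum principle and bounded above by $(1-|x|^2)/4$. By (P2)--(P4), $H^1_0(\Omega_{a,\epsilon})$ Mosco-converges to $H^1_0(D_a)$, so $u_{a,\epsilon}\to u_a$ strongly in $H^1_0$ and uniformly on compact subsets of $D_a$. Interior elliptic estimates then upgrade this to $C^k_{loc}(D_a)$, and in particular $D^2u_{a,\epsilon}(0)\to D^2u_a(0)$. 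By the double symmetry, $D^2u_a(0)=\mathrm{diag}(\mu_1(a),\mu_2(a))$ with $\mu_1+\mu_2=-1$. Here $\mu_1=\partial_{xx}u_a(0)$ is the curvature along the channel and $\mu_2=\partial_{yy}u_a(0)$ the curvature across the slits. Hence $\lim_{\epsilon\to0}\Lambda_\epsilon(a)=\max\{\mu_1(a),\mu_2(a)\}$.

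Next I will show that $a\mapsto\mu_1(a)$ is continuous on $(0,1)$ and study its endpoints. Continuity follows from the same Mosco-convergence argument applied to $D_a\to D_{a_0}$. As $a\to1$, $D_a\to B_1$ and $u_a\to(1-|x|^2)/4$, so $\mu_1\to-1/2$. As $a\to0$, the slits cut the disk into two half-disks joined through a vanishing gap, so $u_a$ near $0$ is small. To obtain a sign change I will compare with the strip $\{|y|<a\}$ near the origin: for small $a$ the function $u_a$ is nearly one-dimensional in $y$ near $0$, giving $\mu_2\approx-1$ and therefore $\mu_1\approx 0^+$. I will need to make this rigorous by showing that $\partial_{xx}u_a(0)>0$ for small $a$. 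Along the $x$-axis, $u_a(x,0)$ vanishes at $|x|=a$ while being larger near $|x|=a\pm$ off-axis. A cleaner route is to view $u_a$ as the limit of the torsion functions of the two half-disks glued through a window: $0$ is then a saddle point of $u_a$, since the maxima sit in the half-disks.

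I then define $a^*=\inf\{a:\mu_1(b)<0\ \text{for all}\ b\in(a,1)\}$. By continuity $\mu_1(a^*)=0$, so $\lim_{a\downarrow a^*}\max\{\mu_1,\mu_2\}=\max\{0,-1\}=0$. The main obstacle is proving that $0$ is the unique global maximizer of $u_{a,\epsilon}$ for every $a>a^*$ and small $\epsilon$. For this I plan to use (P5)--(P6) together with the symmetry. By moving planes in the $x$-direction (the domain is $x$-convex and symmetric), $\partial_xu<0$ for $x>0$. In $y$, however, convexity fails, so moving planes yields no monotonicity in $y$. Instead I will argue as follows. Any maximizer lies on the $y$-axis, where $u(0,y)$ solves $-u_{yy}=1+u_{xx}$ with $u_{xx}<0$ on the axis. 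Then $u_{yy}(0,y)<0$ on the axis, and $u(0,\cdot)$ is concave and even, hence maximized only at $0$. The claim $u_{xx}(0,y)<0$ on the axis should hold for $a>a^*$ by a compactness/continuity argument near $y=0$, using $\mu_1<0$. Away from the origin, comparison with the disk should give the bound: at $a=1$ the quantity $u_{xx}=-1/2$, and the channel only affects a neighborhood of $y=0$. Uniqueness for $\epsilon$ small will then follow from $C^2_{loc}$ convergence, the nondegeneracy $\mu_1,\mu_2<0$, and uniform smallness of $u$ near $\partial\Omega$.

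Finally, for \eqref{inrad}: $\operatorname{diam}\le2$. Since $\Omega_{a,\epsilon}\supset\Omega_{a',\epsilon}$-type limits, I expect the construction in Appendix A to contain the balls $B_{1/4}(0,\pm1/2)$ inside the upper and lower halves. These balls lie in the half-disks, away from the slits that the slot approximates. This gives $\operatorname{inrad}\ge1/4$ and hence a ratio of at most $8$.
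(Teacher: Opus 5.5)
\textbf{The small-$a$ regime and your definition of $a^*$ are wrong.} The slits lie on the $x$-axis for $|x|\ge a$. Near the origin, $D_a$ is therefore the window $(-a,a)\times\{0\}$ connecting the two half-disks, not a strip $\{|y|<a\}$. So $u_a$ vanishes at $(\pm a,0)$ and increases from $0$ into the half-disks, exactly as your own remark that ``the maxima sit in the half-disks'' suggests. The ascending direction of the saddle is $y$, not $x$. The paper makes this quantitative by a blow-up, $u_a(aX,aY)/a\to\beta\,\mathrm{Re}\sqrt{1-(X+iY)^2}$ in $C^2$ with $\beta=\partial_yW(0,0)>0$. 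This gives $\partial_{yy}u_a(0)\sim\beta/a\to+\infty$ and $\partial_{xx}u_a(0)\to-\infty$, so the claim ``$\partial_{xx}u_a(0)>0$ for small $a$'' is false. In fact the Green representation described below gives $\partial_{yy}u_a(0)=-\tfrac12+\int K(t,0)\,d\sigma_a(t)\ge-\tfrac12$, i.e.\ $\mu_1(a)\le-\tfrac12$ for \emph{every} $a\in(0,1)$. Your set $\{a:\mu_1<0\text{ on }(a,1)\}$ is therefore all of $(0,1)$, your $a^*$ equals $0$, and $\mu_1(a^*)=0$ never occurs. Even in your scenario, that set controls only $\mu_1$ and would not give $\mu_2<0$ on $(a^*,1)$, which you use later. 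The correct critical parameter is $a^*=\sup\{a:\Lambda(a)\ge0\}$ with $\Lambda=\max\{\mu_2,-1-\mu_2\}$. At $a^*$ one has $\mu_2=0$ and $\mu_1=-1$, the reverse of your picture. Its existence in $(0,1)$ needs a real proof that $\mu_2>0$ for small $a$. The paper obtains this from the explicit harmonic function $\mathrm{Re}\sqrt{a^2-z^2}-y$ on $B^+$, a mixed-problem maximum principle for the error terms, and Schauder estimates after rescaling.

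\textbf{The global-maximizer argument does not close.} Since $u_{yy}=-1-u_{xx}$, the sign $u_{xx}(0,y)<0$ supplied by moving planes gives only $u_{yy}(0,y)>-1$. Concavity of $u_a(0,\cdot)$ would require $u_{xx}(0,y)>-1$ along the \emph{whole} axis. Continuity from $y=0$ gives this only locally. Comparison with the disk is informative only for $a$ near $1$, not for $a$ near $a^*$, where the slits occupy a macroscopic part of $B$. The missing idea is the paper's axis-monotonicity lemma. One writes $U-u_a=\int_{\overline{S_a}}G_B(\cdot,\xi)\,d\sigma_a(\xi)$ with $\sigma_a\ge0$, obtained as the weak limit of $\chi_{T_{a,\epsilon}}\,dx-\partial_\nu u_{a,\epsilon}\,\mathcal H^1$ on $\partial T_{a,\epsilon}\cap B$ (nonnegative by Hopf). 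Then $\partial_yG_B((0,y),(t,0))=-yK(t,y)$ with $0\le K(t,y)\le K(t,0)$. This yields $\partial_yu_a(0,y)\le y\,\partial_{yy}u_a(0)$ for all $y\in(0,1)$. So the single condition $\partial_{yy}u_a(0)<0$ already forces strict decrease along the entire positive axis, and no concavity is needed. In the passage to $\Omega_{a,\epsilon}$, your ``uniform smallness near $\partial\Omega$'' should come from $u_{a,\epsilon}\le u_a$ together with $u_a\in C(\overline{D_a})$ and $u_a=0$ on the slits (regularity of slit points via the Wiener criterion). This gives the gap $u_a(0)-\max_{\overline{D_a}\setminus B_r}u_a>0$. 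The remaining steps---Mosco convergence, continuity in $a$, the limit $a\to1$, moving planes in $x$, and the inradius bound---agree with the paper.
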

\begin{Rem}
Theorem~\ref{Thm1.1} gives a negative answer to Steinerberger's
question for smooth, simply connected nonconvex domains. If
Steinerberger's estimate held for this class of domains with universal
constants \(c_1,c_2>0\), then
\[
\Lambda_\epsilon(a)
\leq
-c_1\exp\left(
-c_2\frac{\operatorname{diam}(\Omega_{a,\epsilon})}
{\operatorname{inrad}(\Omega_{a,\epsilon})}
\right)
\leq
-c_1e^{-8c_2}<0.
\]
It would then follow that $\lim\limits_{a\downarrow a^*}\lim\limits_{\epsilon\to0}
\Lambda_\epsilon(a)
\leq -c_1e^{-8c_2}<0$, contradicting
$
\lim\limits_{a\downarrow a^*}\lim\limits_{\epsilon\to0}
\Lambda_\epsilon(a)=0$.
Hence Steinerberger's estimate cannot be extended to smooth, simply connected nonconvex domains.
\end{Rem}
 Our construction also relates to a question raised by Gladiali and Grossi in \cite{GG2022}: \textbf{\textit{“If 
$\Omega$ is a star-shaped domain, does it follow that the superlevel sets of $u$ are also star-shaped?"}} This question is motivated by  Gidas, Ni and Nirenberg \cite{GNN1979}. They showed  that, if $f$ is  locally Lipschitz and $\O $ is  bounded, smooth, symmetric with respect to each coordinate plane, and convex in each coordinate direction, then every superlevel set of the positive solution to \eqref{f(u)} is star-shaped with respect to the origin.  Gladiali and Grossi \cite{GG2022} therefore posed this question and constructed a counterexample in which the domain is star-shaped but lacks the symmetry and convexity assumptions, and the superlevel sets of the  torsion function are not star-shaped. In contrast, our domains are symmetric with respect to both coordinate axes and convex in the x-direction.
The next result shows that these stronger geometric assumptions still do not guarantee
star-shaped superlevel sets.

\begin{Thm}\label{Thm1.2}
    Let $u_{a,\epsilon}$ be the solution of \eqref{fc1.1}. Then there exists  ${b}^{*}  \in (0,1)$ such that for every $a\in(0,b^*)$, we can find $\delta (a)>0$ such that, for any $\displaystyle \epsilon\in \big(0,\delta (a)\big)$,
\(u_{a,\epsilon}\) has at least two distinct global maximizers in \(\Omega_{a,\epsilon}\).
\end{Thm}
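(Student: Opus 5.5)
We reduce Theorem~\ref{Thm1.2} to a statement about the limiting slit domain $D_a$ and then transfer it to $\Omega_{a,\epsilon}$ by a stability argument.

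\textbf{Step 1 (convergence).} Let $u_a$ be the torsion function of $D_a$, i.e.\ the minimizer of $\int\frac12|\nabla v|^2-v$ over $H^1_0(D_a)$. By \ref{C4}, the spaces $H^1_0(\Omega_{a,\epsilon})$ increase as $\epsilon\downarrow0$ and their union is dense in $H^1_0(D_a)$. Hence they Mosco-converge, and $u_{a,\epsilon}\to u_a$ in $H^1_0$. Comparison gives monotonicity $u_{a,\epsilon}\uparrow u_a$. Interior elliptic estimates, together with \ref{C3}, upgrade this to $C^2_{\rm loc}$ convergence on compact subsets of $D_a$. We also need uniform smallness of $u_{a,\epsilon}\le u_a$ near $\partial D_a$. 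This follows from $u_a\le$ the torsion function of $B_1$ and from the local $\sqrt{r}$ behaviour of $u_a$ near the slit tips, which is a barrier estimate. Consequently, maximizers of $u_{a,\epsilon}$ stay in a fixed compact subset of $D_a$.

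\textbf{Step 2 (the limit has two maximizers off the origin).} By symmetry, $0$ is a critical point of $u_a$. It suffices to show that for $a$ small the origin is not a global maximizer of $u_a$, and more precisely that $\sup u_a>u_a(0)$ with the maximum attained at a point $p_a=(0,y_a)$, $y_a>0$, or at some point off the $x$-axis.

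As $a\to0$, $D_a$ converges to $B_1\setminus([-1,1]\times\{0\})$, which is the union of two half-discs. Its torsion function $u_0$ vanishes on the diameter and has positive maxima at the two symmetric points $(0,\pm y_0)$ with $y_0>0$. Using the Mosco convergence $H^1_0(D_a)\to H^1_0(\text{two half-discs})$ as $a\to0$, we get $u_a\to u_0$ uniformly. The slit $(-a,a)\times\{0\}$ has vanishing capacity effect only as $a\to0$; quantitatively, $u_a(0)\to0$ while $u_a(0,y_0)\to u_0(0,y_0)>0$. Thus for $a<b^*$ we have $u_a(0)<\max u_a$, and any maximizer $q$ lies off the $x$-axis.

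By the reflection symmetry $y\mapsto-y$, the reflected point $\bar q$ is also a maximizer, and $\bar q\neq q$. So $u_a$ has at least two global maximizers, and they are separated by the strict gap
\[
\eta_a:=\max u_a-\max_{\{|y|\le\delta\}}u_a>0
\]
for some $\delta>0$. Here we use uniform convergence near the $x$-axis together with the fact that $u_0=0$ there.

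\textbf{Step 3 (transfer to $\epsilon>0$).} By Step 1, for $\epsilon<\delta(a)$ we have $\|u_{a,\epsilon}-u_a\|_{L^\infty}<\eta_a/2$ on the relevant region. Then $\max u_{a,\epsilon}$ is attained in $\{|y|>\delta\}$. Each $\Omega_{a,\epsilon}$ is symmetric with respect to $y\mapsto-y$ by \ref{C1}, so $u_{a,\epsilon}$ is even in $y$. Therefore any maximizer $q_\epsilon$ with $|y(q_\epsilon)|>\delta$ has a distinct mirror image that is also a global maximizer.

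\textbf{Main obstacle.} The main difficulty is Step 2, i.e.\ showing $u_a(0)\to0$ as $a\to0$ uniformly enough to beat $u_0(0,y_0)$. I would first try a barrier argument. Near the origin, $u_a$ is bounded by the torsion function of a neighbourhood in which the slits $[a,1)$ and $(-1,-a]$ nearly close the gap. Comparison with an explicit harmonic function that vanishes on the slits, plus $\tfrac14(1-|x|^2)$, should bound $u_a(0)$ by the sup of $u_a$ on a circle of radius $r$ times the harmonic measure of that circle seen from $0$. That harmonic measure tends to $0$ as $a/r\to0$, and we then optimize in $r$.

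If this fails, the fallback is the $H^1$-Mosco limit combined with interior regularity, which gives $u_a\to u_0$ locally uniformly away from the points $(\pm1,0)$, in particular at the origin.
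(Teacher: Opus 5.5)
Your argument is correct in outline, but it uses a different mechanism from the paper. For Theorem~\ref{Thm1.2} the paper works at second order at the origin:
\begin{itemize}
\item Proposition~\ref{small-a-positive} shows $\mu(a)=\partial_{yy}u_a(0)=\beta/a+o(1/a)>0$, using the explicit function $\operatorname{Re}\sqrt{a^2-z^2}$, the correctors $\eta_a,E_a$, and a blow-up with Schauder estimates for the mixed problem.
\item $b^*$ is then the first zero of $\mu$.
\item Local $C^2$ convergence gives $\partial_{yy}u_{a,\epsilon}(0)>0$, hence $u_{a,\epsilon}(0,y)>u_{a,\epsilon}(0)$ for small $y\neq0$.
\item Lemma~\ref{lem:horizontal-monotonicity} and evenness in $y$ then give the pair $(0,\pm y_\epsilon)$.
\end{itemize}
You instead compare values: $u_a(0)\to0$ as $a\to0$, while $u_a(0,\tfrac12)\to u_0(0,\tfrac12)>0$ by Lemma~\ref{continuity-a} with $a_0=0$. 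This bypasses the Hessian asymptotics entirely and needs only a harmonic-measure estimate at the gap, which your barrier does deliver. With $V=(r^2-|z|^2)/4$ and $a<r<1$, the function $u_a-V$ is harmonic in $B_r\cap D_a$. It is at most $\max u_a\le\tfrac14$ on $\partial B_r$ and nonpositive on the slits. Hence $u_a(0)\le r^2/4+\tfrac14\,\omega_{a/r}(0)$, where $\omega_\delta$ is the harmonic measure of $\partial B_1$ in $B_1\setminus\bigl(([-1,-\delta]\cup[\delta,1])\times\{0\}\bigr)$. On the upper half of $B_{1/2}$, compare $\omega_\delta$ with $C\operatorname{Re}\sqrt{\delta^2-z^2}$. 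This function is $\ge y$, vanishes on the axis for $|x|\ge\delta$, and has zero normal derivative for $|x|<\delta$; on the arc $|z|=\tfrac12$ one has $\omega_\delta\le\omega_{1/4}\le Cy$. The comparison gives $\omega_\delta(0)\le C\delta$, so $u_a(0)\le Ca^{2/3}$ with $r=a^{1/3}$. What your route does not give is the saddle structure at the origin, or a threshold tied to $\mu$. In the paper, the same quantity $\mu$ also drives Theorem~\ref{Thm1.1}; your $b^*$ is merely ``$a$ small enough'', which the existential statement permits.

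Two further remarks. First, your fallback would not work. Mosco convergence plus interior estimates only give convergence on compact subsets of $D_0=B\setminus((-1,1)\times\{0\})$, and the origin lies on $\partial D_0$. Since points have zero capacity in the plane, $H^1$ convergence says nothing about $u_a(0)$, so the barrier is indispensable. Likewise, the uniform convergence $u_a\to u_0$ you assert in Step~2 is only available after the barrier step, e.g.\ by Dini's theorem: $u_a$ decreases as $a\downarrow0$ and is continuous on $\overline B$ by Lemma~\ref{capacity}.

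Second, Steps~1 and~3 are heavier than necessary. By Lemma~\ref{Lem2.2}, for $p=(0,\tfrac12)$,
\[
u_{a,\epsilon}(0)\le u_a(0)<u_a(p)=\lim_{\epsilon\to0}u_{a,\epsilon}(p),
\]
so the origin is not a global maximizer of $u_{a,\epsilon}$ for small $\epsilon$. Any interior maximizer $q=(x,y)\neq0$ then has a distinct reflection, $(-x,y)$ or $(x,-y)$, which is also a maximizer. No uniform convergence near $\partial D_a$, no gap near the $x$-axis, and no tip asymptotics are needed.
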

 \begin{Cor}\label{cor1.3}
Let $u_{a,\epsilon}$ be the solution of \eqref{fc1.1}. 
Fix $a\in(0,b^*)$, where $b^*$ is as in Theorem \ref{Thm1.2}. 
Then there exists $\epsilon^{*}(a)>0$ such that, for all $\displaystyle \epsilon\in\big(0,\epsilon^{*}(a)\big)$, 
the domain $\Omega_{a,\epsilon}$ is smooth, symmetric with respect to both coordinate axes, star-shaped with respect to the origin, and convex in the $x$-direction, whereas the superlevel set
\begin{equation}
\label{1.7}
\mathcal{L}_{a,\epsilon}
:=
\big\{
(x,y)\in\Omega_{a,\epsilon}:
u_{a,\epsilon}(x,y) > u_{a,\epsilon}(0,0)
\big\}
\end{equation}
is not star-shaped. In fact, $\mathcal{L}_{a,\epsilon}$ is disconnected.
\end{Cor}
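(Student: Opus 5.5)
The corollary concerns $\mathcal L_{a,\epsilon}$. The plan is to deduce it from Theorem~\ref{Thm1.2} together with the geometric properties \ref{C1}, \ref{C5}, \ref{C6} of the family.

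\emph{Geometry of the domain.} Take $\epsilon^*(a)\le\min\{\delta(a),\epsilon_0(a)\}$, where $\delta(a)$ is as in Theorem~\ref{Thm1.2}. For such $\epsilon$:
\begin{itemize}
\item smoothness and double symmetry are \ref{C1};
\item star-shapedness with respect to the origin is \ref{C5};
\item convexity in the $x$-direction is \ref{C6}, which follows from \eqref{quyuomiga}.
\end{itemize}

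\emph{The origin is a critical point but not a maximizer.} By uniqueness of the torsion function and the symmetry of $\Omega_{a,\epsilon}$, $u_{a,\epsilon}$ is even in $x$ and in $y$, so $\nabla u_{a,\epsilon}(0)=0$. Theorem~\ref{Thm1.2} gives two distinct global maximizers. I claim $0$ is not a global maximizer. Suppose it were, and let $p\neq 0$ be another one. By the symmetry, $-p$ and the reflections of $p$ are maximizers too. Along the $x$-axis inside $D_a$, namely the segment $(-a,a)\times\{0\}$, and along the $y$-axis, the function should be monotone. I would argue this via the Gidas--Ni--Nirenberg moving-plane method in the $x$-direction, which uses \ref{C6} and the symmetry in $x$. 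It gives $\partial_x u_{a,\epsilon}<0$ for $x>0$, so every maximizer lies on $\{x=0\}$. Then $p=(0,t)$ with $t\neq0$, and $u(0,t)\ge u(0,0)$.

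This is where I have to be careful. Monotonicity in $y$ is \emph{not} available, since the domain is not $y$-convex, so this route alone does not exclude $0$. Instead I would use the mechanism in the proof of Theorem~\ref{Thm1.2}. For small $a$ the limit domain $D_a$ nearly splits into two half-disks, and the maximizers of $u_{a,\epsilon}$ converge to points near $(0,\pm c)$ with $c>0$. Moreover $u_{a,\epsilon}(0,\pm c)-u_{a,\epsilon}(0)$ stays bounded below by a positive constant as $\epsilon\to0$. This uses the convergence $u_{a,\epsilon}\to u_a$ on $D_a$ (Mosco convergence, as in the keywords), together with the strict inequality $\max u_a>u_a(0)$ that presumably underlies Theorem~\ref{Thm1.2}. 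Hence $\max u_{a,\epsilon}>u_{a,\epsilon}(0)$, possibly after shrinking $\epsilon^*(a)$, and $\mathcal L_{a,\epsilon}$ is nonempty.

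\emph{Disconnectedness and failure of star-shapedness.} By the $x$-monotonicity, every point $(x,y)\in\mathcal L_{a,\epsilon}$ satisfies $u(0,y)\ge u(x,y)>u(0,0)$. On the $x$-axis, monotonicity gives $u(x,0)\le u(0,0)$ for every $(x,0)\in\Omega_{a,\epsilon}$. So $\mathcal L_{a,\epsilon}$ misses the line $\{y=0\}$. It contains the maximizers $(0,t)$ and $(0,-t)$, which lie in the two open half-planes $\{y>0\}$ and $\{y<0\}$, so $\mathcal L_{a,\epsilon}$ is disconnected.

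A star-shaped set with respect to some point $z$ is connected, because each point is joined to $z$ by a segment. Hence $\mathcal L_{a,\epsilon}$ is not star-shaped with respect to any point.

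\emph{Main obstacle.} The key step is the strict inequality $\max u_{a,\epsilon}>u_{a,\epsilon}(0)$. Theorem~\ref{Thm1.2} literally gives only two maximizers, so I would extract this strict gap quantitatively from its proof. The strict monotonicity $\partial_x u<0$ for $x>0$ via moving planes is standard under \ref{C1} and \ref{C6}.
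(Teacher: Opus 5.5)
\textbf{There is a genuine gap at the step you flag as the main obstacle.} Your disconnection step is the paper's argument. Lemma~\ref{lem:horizontal-monotonicity}, which your moving-plane argument reproduces, gives $\mathcal L_{a,\epsilon}\cap\{y=0\}=\varnothing$. So once $\mathcal L_{a,\epsilon}$ has points in both $\{y>0\}$ and $\{y<0\}$, it splits into two nonempty relatively open pieces, and a star-shaped set is connected.

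The unproved step is the strict inequality $\max u_{a,\epsilon}>u_{a,\epsilon}(0,0)$. It is essential: if the origin were a global maximizer, then $\mathcal L_{a,\epsilon}=\varnothing$ and the conclusion would fail. Your proposed mechanism does not supply it, for two reasons.
\begin{enumerate}
\item The picture of $D_a$ nearly splitting into two half-disks is heuristic and at best concerns small $a$. The corollary must hold for every $a\in(0,b^*)$, with $b^*$ fixed by Theorem~\ref{Thm1.2}.
\item The inequality $\max u_a>u_a(0)$ that you feed into the limit $\epsilon\to0$ is only presumed. Granting it, your transfer step would be fine: if $u_a(p)>u_a(0)$ at some $p\in D_a$, the local convergence in Lemma~\ref{smooth-convergence} gives $u_{a,\epsilon}(p)>u_{a,\epsilon}(0)$ for small $\epsilon$. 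But nothing in the proposal proves it, and the statement of Theorem~\ref{Thm1.2} alone does not give it.
\end{enumerate}

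\textbf{The missing idea is local and second order; you do not need to locate the maximizers.}
\begin{enumerate}
\item The threshold is defined as $b^*=\inf\{a\in(0,1):\mu(a)\le0\}$. Hence $\mu(a)=\partial_{yy}u_a(0)>0$ for every $a\in(0,b^*)$; see \eqref{critical-lambda-zeroF}. Positivity for small $a$ is Proposition~\ref{small-a-positive}.
\item Local $C^2$ convergence near the origin transfers this to $\mu_{a,\epsilon}=\partial_{yy}u_{a,\epsilon}(0)\ge\mu(a)/2>0$ for $\epsilon<\delta(a)$; this is Proposition~\ref{prop:saddle-persistence}.
\item Since $\nabla u_{a,\epsilon}(0)=0$ by symmetry, Taylor expansion gives $u_{a,\epsilon}(0,y)-u_{a,\epsilon}(0,0)=\tfrac12\mu_{a,\epsilon}y^2+o(y^2)$.
\item Therefore $(0,\pm t)\in\mathcal L_{a,\epsilon}$ for all small $t>0$, which is \eqref{eq:vertical-growth}.
\end{enumerate}
This gives nonemptiness of $\mathcal L_{a,\epsilon}$ on both sides of the $x$-axis directly, which is exactly what your disconnection argument needs.
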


The principal difficulty is to produce Hessian degeneracy through the outer-boundary geometry while preserving smoothness and simple connectivity. This requires a precise analysis of the singular slit limit, including the asymptotic behavior of the Hessian, as well as a global maximum argument in the absence of full convexity.

\vskip0.2cm

To overcome these challenges, we first analyze the torsion function $u_a$ on the slit domain $D_a$, and then transfer these conclusions to the smooth approximating domains using
Mosco convergence and capacity. The variational convergence argument under domain perturbation is motivated  by \cite{BV2000}. To analyze the Hessian of the torsion function $u_a$ at the origin as $a\rightarrow0$ and 
$a \rightarrow 1$, we establish boundary regularity on the slit domain $D_a$ and construct an explicit comparison function to derive sharp asymptotics for the Hessian as $a \rightarrow 0$. These results enable us to identify the critical parameter $a^*$ such that the largest Hessian eigenvalue tends to zero as $a\to a^*$. In particular, Lemma \ref{lem:horizontal-monotonicity} uses a reflection argument to show that every global maximizer of the torsion function lies on the \(y\)-axis, which is inspired by the moving-plane method in \cite{GNN1979}. Furthermore, Lemma \ref{green-monotonicity} establishes the strict monotonicity of  $u_a$ along the \(y\)-axis using the Green function and Radon measure representation. Lemma \ref{lem:horizontal-monotonicity} and Lemma \ref{green-monotonicity}  identify the origin as the unique global maximizer in the relevant parameter regime, forming the core of the proof of Theorem \ref{Thm1.1}. 
 \vskip 0.2cm
The paper is organized as follows.  In Section \ref{preliminary results}, we introduce some  basic analytical results that will be used throughout the paper. In Section \ref{sign change}, we establish the change of Hessian signature and use it to define
the critical parameters $a^*$ and $b^*$. Moreover, we  prove Theorem \ref{Thm1.2} and Corollary \ref{cor1.3}.  Section \ref{global maximum} is devoted to proving Theorem \ref{Thm1.1}. Finally, some  technical proofs are deferred to the appendices.
\vskip 0.2cm
Throughout the paper, we set
$B_r:=B_r(0)$,  $B_r^+:=B_r(0)\cap\{y>0\}$,
and write $B:=B_1$,  $B^+:=B_1^+$. Unless otherwise specified,
every function in $H_0^1(G)$, $G\subset B$, is identified with its
zero extension to $B$. In particular, $u_a$ and
$u_{a,\epsilon}$ are regarded as elements of $H_0^1(B)$.

\section{Preliminaries and convergence to the slit domain}\label{preliminary results}
In this section, we recall some basic facts about capacity and Mosco convergence and establish several useful results on symmetry and regularity.

\vskip0.1cm

 Let \(u_a\in H^1_0(D_a)\) be the weak solution of $- \Delta u=1$ in $D_a$ and define \begin{equation}\label{Ju}
    J(u)=\frac12\int_{B}|\nabla u|^2\dd x-\int_{B}u\dd x.
\end{equation}
The existence and uniqueness of $u_a$ follow from the Lax--Milgram theorem.  We next  improve the boundary regularity of  $u_a$ by a capacity method, and the proof is postponed to Appendix \ref{B}.
\begin{Lem}\label{capacity}
Every point of $\partial D_a$ is regular for the Dirichlet problem. Consequently,
\[
u_a \in C(\overline{D_a}), \quad u_a = 0 \quad \text{pointwise on } \partial D_a.
\]
\end{Lem}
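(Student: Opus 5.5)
Every point of $\partial D_a$ will be shown to be regular via the Wiener criterion, using the lower bound on logarithmic capacity supplied by the slit itself. Note that $\partial D_a=\partial B\cup \overline{S_a}$. Points of $\partial B$ are regular by the exterior cone (even exterior ball) condition, since the complement of $D_a$ contains $\R^2\setminus B$. So the real content is regularity at points $z_0\in\overline{S_a}$, in particular the interior slit tips $(\pm a,0)$ and points on the segments.

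For $z_0\in \overline{S_a}$, the complement $K:=\R^2\setminus D_a$ contains a line segment emanating from $z_0$: for $z_0=(x_0,0)$ with $a\le x_0<1$, the segment $[x_0,1]\times\{0\}$ (together with the exterior of $B$) lies in $K$; symmetrically for the left slit. Thus for all small $r>0$, $K\cap \overline{B_r(z_0)}$ contains a segment of length at least $\min(r,1-x_0)\ge r$ for $r$ small, or, near $x_0=1$, contains the arc of $\partial B$ plus exterior, so in all cases a segment of length comparable to $r$. I would then invoke the planar Wiener test: $z_0$ is regular iff $\sum_{k}\,k\,\bigl/\log(1/\mathrm{cap}(K\cap B_{2^{-k}}(z_0)))$ diverges (equivalently, using the Wiener capacity relative to a fixed ball, $\sum_k \mathrm{Cap}(K\cap \overline{B_{2^{-k}}}(z_0),B_{2^{1-k}}(z_0))=\infty$). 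Since a segment of length $\ell$ has logarithmic capacity $\ell/4$, the relative condenser capacity of $K\cap \overline{B_{2^{-k}}}(z_0)$ in $B_{2^{1-k}}(z_0)$ is bounded below by a constant $c>0$ independent of $k$, by scale invariance of two-dimensional capacity (rescale by $2^k$ to a fixed configuration: a segment of length $\ge 1/2$ inside the unit ball, relative to the ball of radius 2, which has positive capacity). Hence every term is $\ge c$ and the series diverges. Alternatively, and more elementarily, I could produce an explicit barrier at $z_0$: after translating $z_0$ to the origin with the slit along the positive $x$-axis, $w(z)=\operatorname{Re}\sqrt{z}$ (branch cut along the slit direction, i.e. $\sqrt{r}\cos(\theta/2)$ with $\theta\in(0,2\pi)$ measured from the slit) is harmonic in $D_a\cap B_\rho(z_0)$, positive away from $z_0$, and vanishes only at $z_0$ on the closure; modifying it with a standard local-to-global barrier argument gives regularity. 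I would present the Wiener/capacity version since the paper signals a capacity method, keeping the $\sqrt z$ barrier as a remark.

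Once all boundary points are regular, the Perron solution $h$ of $\Delta h=0$ with boundary data $-\phi$, where $\phi(z)=-|z|^2/4$ is a smooth function with $-\Delta\phi=1$ defined on all of $\R^2$, is continuous on $\overline{D_a}$ and equals $-\phi$ pointwise on $\partial D_a$. Then $v:=\phi+h$ solves $-\Delta v=1$ classically, $v\in C(\overline{D_a})$, $v=0$ on $\partial D_a$. The remaining step, which I expect to be the main technical obstacle, is to identify $v$ with the variational solution $u_a\in H^1_0(D_a)$. For this I would use that on a bounded open set, the Perron solution with continuous data agrees with the $H^1$-weak solution whenever the data extends to an $H^1$ function (here $-\phi$ is smooth on $\R^2$): approximate $D_a$ from inside by smooth exhausting domains $G_n$ (e.g. the $\Omega_{a,\epsilon}$ themselves via \ref{C4}), solve on $G_n$, and pass to the limit using monotonicity of solutions in the domain (maximum principle) together with the fact that, since every boundary point is regular, quasi-every point of $\R^2\setminus D_a$ is a point where $H^1_0$ functions vanish quasi-everywhere, so $H^1_0(D_a)=\{w\in H^1_0(B): w=0\ \text{q.e. on } K\}$ and both limits coincide. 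Uniqueness in $H^1_0(D_a)$ then gives $u_a=v$, yielding $u_a\in C(\overline{D_a})$ with $u_a=0$ pointwise on $\partial D_a$.
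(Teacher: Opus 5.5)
This is essentially the paper's proof. Points of $\partial B$ are handled by exterior balls. At slit points you use the planar Wiener criterion, with a segment of length comparable to $r$ in the complement whose relative capacity is bounded below uniformly by two-dimensional scale invariance. You then identify the variational solution with the continuous Perron solution, which the paper simply cites as standard.

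Two slips do not affect the main line. First, in your barrier remark $\sqrt{r}\cos(\theta/2)$ changes sign on $\theta\in(\pi,2\pi)$. The correct function is $\sqrt{r}\sin(\theta/2)$, which vanishes on the slit and so is only a weak barrier. Second, your exhaustion sketch does not actually say why the limit equals $v$ rather than just lying below it. It closes most directly by comparison: since $v\ge 0$ by the minimum principle, $0\le u_{a,\epsilon}\le v$ on $\Omega_{a,\epsilon}$, and $u_{a,\epsilon}\to u_a$ then gives $0\le u_a\le v$. Hence $u_a$ inherits the zero boundary values of $v$, and $v-u_a$ is harmonic, continuous on $\overline{D_a}$ and zero on $\partial D_a$. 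The quasi-everywhere description of $H^1_0(D_a)$ holds for every open set, needs no regularity, and is not what is needed here.
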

\begin{proof}
In a planar slit domain, boundary regularity of the Perron solution is established through the Wiener criterion using a capacity estimate. The full proof is given in Appendix~\ref{B}, and a similar argument can be found in \cite{BV2000}.
\end{proof}

\vskip 0.1cm
\begin{Lem}\label{Lem2.2}
Let $u_{a, \e}$ be the solution of \eqref{fc1.1}, and $u_a$ be the solution of \eqref{1h}  when  $\O = D_a$. Since $\Omega_{a,\epsilon} \subset D_a$, we have
\begin{equation}\label{u_a}
0 < u_{a,\e} \le u_a \quad \text{in } \Omega_{a,\epsilon}.
\end{equation}
\end{Lem}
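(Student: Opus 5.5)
The plan is to establish the two inequalities separately: positivity by the strong maximum principle on the smooth domain, and the upper bound by the weak maximum principle (comparison) applied to $w:=u_a-u_{a,\epsilon}$. Both require care with the nonsmooth slit boundary of $D_a$.

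\emph{Positivity.} Since $\Omega_{a,\epsilon}$ is a bounded $C^\infty$ domain by \ref{C1}, classical Schauder theory gives $u_{a,\epsilon}\in C^\infty(\overline{\Omega_{a,\epsilon}})$. Because $-\Delta u_{a,\epsilon}=1>0$, the function $u_{a,\epsilon}$ is superharmonic. The weak minimum principle therefore gives $u_{a,\epsilon}\ge 0$. Since $u_{a,\epsilon}\not\equiv 0$ (as $\Delta u_{a,\epsilon}=-1$), the strong maximum principle then yields $u_{a,\epsilon}>0$ in $\Omega_{a,\epsilon}$.

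\emph{Upper bound.} Set $w:=u_a-u_{a,\epsilon}$ on $\Omega_{a,\epsilon}$. Then $w$ is harmonic in $\Omega_{a,\epsilon}$, because both functions solve $-\Delta u=1$ there, and interior elliptic regularity makes $u_a$ smooth inside $D_a\supset\Omega_{a,\epsilon}$ by \ref{C2}. For the boundary, take a point $p\in\partial\Omega_{a,\epsilon}$. Then $u_{a,\epsilon}(p)=0$. Also $p\in\overline{D_a}$, and Lemma~\ref{capacity} gives $u_a\in C(\overline{D_a})$ with $u_a\ge0$; the latter holds by the weak maximum principle in $H^1_0$, testing with $u_a^-$. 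Hence $w$ is continuous on $\overline{\Omega_{a,\epsilon}}$ with $w\ge0$ on $\partial\Omega_{a,\epsilon}$, and the maximum principle for harmonic functions gives $w\ge 0$, that is, $u_{a,\epsilon}\le u_a$.

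\emph{Alternative and main obstacle.} One can avoid pointwise boundary values entirely by a variational route. Both functions lie in $H^1_0(B)$, being zero-extended, and $u_{a,\epsilon}\in H^1_0(\Omega_{a,\epsilon})\subset H^1_0(D_a)$. Test the weak formulation of $-\Delta w=0$ on $\Omega_{a,\epsilon}$ with $(u_{a,\epsilon}-u_a)^+$. This function belongs to $H^1_0(\Omega_{a,\epsilon})$ since $0\le (u_{a,\epsilon}-u_a)^+\le u_{a,\epsilon}^+$, using $u_a\ge0$, which is a standard lattice property of $H^1_0$. The test gives $\int|\nabla (u_{a,\epsilon}-u_a)^+|^2=0$, hence $(u_{a,\epsilon}-u_a)^+=0$.

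The only delicate point is this justification of boundary behaviour near the slit. In the pointwise argument it is supplied by Lemma~\ref{capacity}; in the variational argument it is supplied by the $H^1_0$ lattice property. I expect this step to be the main obstacle, and I would present the variational version as the primary proof because it is cleanest.
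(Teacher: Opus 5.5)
Your proof is correct. Your pointwise argument is exactly the paper's proof. The paper sets $v=u_a-u_{a,\epsilon}$, observes that $v$ is harmonic in $\Omega_{a,\epsilon}$ with $v=u_a\ge0$ on $\partial\Omega_{a,\epsilon}$, and applies the maximum principle, with the strong maximum principle giving $u_{a,\epsilon}>0$. The continuity of $u_a$ up to $\partial\Omega_{a,\epsilon}$ that this tacitly uses is what Lemma~\ref{capacity} supplies, as you say.

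The variational version you prefer is a genuinely different route. You test $\int\nabla(u_a-u_{a,\epsilon})\cdot\nabla\varphi=0$, $\varphi\in H^1_0(\Omega_{a,\epsilon})$, with $\varphi=(u_{a,\epsilon}-u_a)^+$. This uses only $u_a\ge0$ and the lattice property of $H^1_0$. It therefore needs no boundary regularity of either domain, and it proves monotonicity of torsion functions under inclusion of arbitrary bounded open sets. That fits naturally with the paper's $H^1_0$/Mosco framework. The paper's route is shorter to write but leans on the Wiener-criterion lemma.

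The obstacle you flag is milder than you suggest, even for the pointwise route. Since $u_a\ge0$ in $D_a$ and $u_{a,\epsilon}\in C(\overline{\Omega_{a,\epsilon}})$ vanishes on $\partial\Omega_{a,\epsilon}$, you get $\liminf_{x\to p}(u_a-u_{a,\epsilon})(x)\ge0$ at every $p\in\partial\Omega_{a,\epsilon}$. That is all the maximum principle for harmonic functions on a bounded domain requires, so Lemma~\ref{capacity} is not actually needed for this statement.
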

\begin{proof}
Set $v := u_a - u_{a,\epsilon}$ in $\Omega_{a,\epsilon}$. Then 
$-\Delta v = 0 ~ \text{in } \Omega_{a,\epsilon}$. Since $u_{a,\e}=0$ on $\partial\Omega_{a,\e}$, this yields 
\[
v=u_{a}\ge 0
\quad \text{on } \partial\Omega_{a,\e} .
\]
The strong maximum principle gives $u_{a, \e}>0$ in $\Omega_{a,\epsilon}$, while the maximum principle applied to $v$
yields $v\geq 0$ in $\Omega_{a,\epsilon}$, proving \eqref{u_a}.
\end{proof}
 \vskip 0.1cm
 
 \subsection{Results on symmetry and horizontal monotonicity}
\begin{Lem}
For every $a\in(0,1)$, the torsion function $u_a$ is positive in $D_a$
and  even in both variables:
\[
u_a(x,y)=u_a(-x,y)=u_a(x,-y).
\]
In particular,
\begin{equation}\label{mu(a)}
\nabla u_a(0)=0,
\qquad
D^2u_a(0)=
\begin{pmatrix}
-1-\mu(a) & 0\\
0 & \mu(a)
\end{pmatrix},
\qquad
\mu(a):=\partial_{yy}u_a(0).
\end{equation}
Consequently,
\begin{equation}\label{Lambda-formula}
\Lambda(a):=\lammax\bigl(D^2u_a(0)\bigr)
=\max\bigl\{\mu(a),-1-\mu(a)\bigr\}.
\end{equation}
\end{Lem}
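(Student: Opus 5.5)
The plan is to get positivity from the maximum principle and evenness from the uniqueness of the weak solution, and then read off the gradient and Hessian at the origin from symmetry and the equation.

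\emph{Positivity.} By Lemma~\ref{capacity}, $u_a\in C(\overline{D_a})$ with $u_a=0$ on $\partial D_a$. Since $-\Delta u_a=1\ge 0$, $u_a$ is superharmonic in $D_a$, so the weak maximum principle gives $u_a\ge 0$. The strong maximum principle then gives $u_a>0$ in the connected open set $D_a$, because $u_a\not\equiv 0$. Note that $D_a$ is connected: the slits leave the segment $(-a,a)\times\{0\}$ open.

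\emph{Symmetry.} Consider the reflections $R_1(x,y)=(-x,y)$ and $R_2(x,y)=(x,-y)$. Both map $B$ to itself and map $S_a$ to itself, hence preserve $D_a$. If $v\in H_0^1(D_a)$, then $v\circ R_i\in H_0^1(D_a)$ with $J(v\circ R_i)=J(v)$, since the Dirichlet integral and $\int v$ are invariant under isometries. Equivalently, $u_a\circ R_i$ is a weak solution of $-\Delta w=1$ in $D_a$, because the Laplacian commutes with reflections. Uniqueness (Lax--Milgram, or strict convexity of $J$) then forces $u_a\circ R_i=u_a$.

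\emph{Gradient and Hessian at the origin.} Interior elliptic regularity makes $u_a$ smooth (indeed real analytic) near $0\in D_a$. Differentiating $u_a(x,y)=u_a(-x,y)$ gives $\partial_x u_a(0)=0$, and similarly $\partial_y u_a(0)=0$. For the mixed derivative, $\partial_{xy}u_a$ is odd in $x$, so $\partial_{xy}u_a(0)=0$. With $\mu(a):=\partial_{yy}u_a(0)$, the equation gives $\partial_{xx}u_a(0)=\Delta u_a(0)-\mu(a)=-1-\mu(a)$, which is \eqref{mu(a)}. Since the Hessian is diagonal, its eigenvalues are $\mu(a)$ and $-1-\mu(a)$, and this yields \eqref{Lambda-formula}.

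The only step needing care is the positivity claim up to the slit tips. Here it is handled entirely by Lemma~\ref{capacity}: with continuity up to the boundary and zero boundary values, the classical maximum principle applies. Otherwise, one can argue variationally: $u_a^-\in H_0^1$, and testing the equation with it gives $u_a^-=0$. Then the weak Harnack inequality gives strict positivity.
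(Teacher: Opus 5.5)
Your proof is correct and follows the same route as the paper. Positivity comes from Lemma~\ref{capacity} plus the maximum principle, evenness from the reflection invariance of $D_a$ and uniqueness of the Dirichlet problem, and the Hessian form from symmetry ($\nabla u_a(0)=0$, $\partial_{xy}u_a(0)=0$) together with $\Delta u_a(0)=-1$; your extra details (connectedness of $D_a$, the variational alternative of testing with $u_a^-$) are sound and only make explicit what the paper leaves implicit.
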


\begin{proof}
By Lemma \ref{capacity} and the maximum principle, $u_a>0$ in $D_a$.
The symmetry of $D_a$ and the uniqueness of the Dirichlet problem imply that
$u_a$ is even in both variables. Since $u_a$ is smooth near the origin,
$\nabla u_a(0)=0$ and  $\partial_{xy}u_a(0)=0$.
Since $-\Delta u_a=1$ near the origin,
\[
\partial_{xx}u_a(0)+\partial_{yy}u_a(0)=-1,
\]
which gives \eqref{mu(a)} and \eqref{Lambda-formula}.
\end{proof}

 We next use a planar reflection argument to show that every maximum point of $u_a$ and $u_{a,\e}$ lies on the \(y\)-axis, which relies crucially  on the symmetry of $D_a$ and $\O_{a,\e}$.

\begin{Lem}\label{lem:horizontal-monotonicity}
Let \(G\) be either  \(\Omega_{a,\epsilon}\) or \(D_a\), and let
\(u\in C^2(G)\cap C(\overline{G})\) be the corresponding torsion
function, namely,
\begin{equation*}
-\Delta u=1 \quad \text{in }G,
\quad  u=0  \quad\text{on }\partial G.
\end{equation*}
Then 
\begin{equation*}
u(0,y)>u(x,y)
\quad
\text{for every }(x,y)\in G\text{ with }x\neq0.
\end{equation*}
Consequently, every global maximizer of $u$ lies on the $y$-axis.
\end{Lem}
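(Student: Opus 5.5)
The plan is a one-step moving-plane (reflection) comparison with respect to the $y$-axis, which uses only the symmetry of $G$ in $x$ and its convexity in the $x$-direction. By the symmetry of $G$ under $x\mapsto -x$ and uniqueness of the Dirichlet problem, $u$ is even in $x$. It therefore suffices to prove $u(0,y)>u(x,y)$ for $x>0$.

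Fix $\lambda\in\mathbb R$ and set $\Sigma_\lambda:=\{(x,y)\in G: x>\lambda\}$ and $w_\lambda(x,y):=u(2\lambda-x,y)-u(x,y)$ on $\Sigma_\lambda$. I will show $w_\lambda>0$ in $\Sigma_\lambda$ for every $\lambda\in(0,\sup_G x)$. The reflected set $\Sigma_\lambda':=\{(2\lambda-x,y):(x,y)\in\Sigma_\lambda\}$ is contained in $G$ when $\lambda> 0$. Indeed, for $(x,y)\in\Sigma_\lambda$ the point $(-x,y)$ lies in $G$ by symmetry. Since $2\lambda-x\in(-x,x)$, convexity in the $x$-direction (property \ref{C6} for $\Omega_{a,\epsilon}$; for $D_a$ the horizontal sections are intervals or a union of intervals symmetric under $x\mapsto -x$) gives $(2\lambda-x,y)\in G$.

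In the case of $D_a$, the section at $y=0$ is $(-a,a)$, which is still an interval, so \ref{C6} holds for $D_a$ too. Hence $w_\lambda$ is well-defined, harmonic in $\Sigma_\lambda$, and continuous on $\overline{\Sigma_\lambda}$; for $D_a$ this uses Lemma \ref{capacity}. On the boundary of $\Sigma_\lambda$ it satisfies two conditions. On $\{x=\lambda\}$ we have $w_\lambda=0$. On $\partial G\cap\{x\ge\lambda\}$ we have $w_\lambda=u(2\lambda-x,y)\ge 0$, with strict positivity at points where the reflected point lies inside $G$. By the weak maximum principle, $w_\lambda\ge0$ in $\Sigma_\lambda$. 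By the strong maximum principle on each connected component of $\Sigma_\lambda$, either $w_\lambda>0$ or $w_\lambda\equiv0$ there.

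The main obstacle is excluding $w_\lambda\equiv 0$ on a component. I would take a boundary point $(x_0,y_0)\in\partial G$ of that component with $x_0>\lambda$. Its reflection $(2\lambda-x_0,y_0)$ has $|2\lambda-x_0|<x_0$, so it lies strictly inside the horizontal section and hence in $G$, where $u>0$. This contradicts $w_\lambda=0$ there by continuity. For $D_a$, a boundary point on the slit $[a,1)\times\{0\}$ reflects to $(2\lambda-x_0,0)$. If that point is still on the slit, I would pick instead a point of $\partial B$ on the component's boundary, which is available because every component of $\Sigma_\lambda$ touches the circle: components above and below the slit reach $\partial B$.

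With $w_\lambda>0$ in hand, take any $(x,y)\in G$ with $x>0$ and set $\lambda=x/2$. Then $(x,y)\in\Sigma_\lambda$ and $u(0,y)=u(2\lambda-x,y)>u(x,y)$. Note that $(0,y)\in G$ by horizontal convexity and symmetry. Combined with evenness in $x$, this gives the claim, and in particular that every global maximizer lies on the $y$-axis.
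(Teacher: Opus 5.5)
Your proposal is correct and follows the paper's proof essentially step for step. The common steps are reflection across $\{x=\lambda\}$ for $\lambda>0$, the inclusion $R_\lambda(\Sigma_\lambda)\subset G$ from the interval structure of the horizontal sections, the weak and strong maximum principles on each component of $\Sigma_\lambda$, and the choice $\lambda=x/2$. The only minor difference is in ruling out $w_\lambda\equiv0$: the paper takes the right endpoint $(\rho_G(y_0),y_0)$ of the horizontal segment through any point of the component, which works uniformly for both domains (on the row $y=0$ of $D_a$ that endpoint is $(a,0)$), whereas you fall back on a point of $\partial B$ off the axis when a slit point reflects onto the slit.
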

\begin{proof}
Since \(G\) is symmetric with respect to the \(y\)-axis, the uniqueness
of the torsion function gives $u(-x,y)=u(x,y)$ for any $(x,y)\in G$.
By \eqref{quyuomiga} and the definition of $D_a$, for each \(y \in (-1,1)\), there exists
\(\rho_G(y)>0\) such that
\[
\big\{x\in\mathbb{R}:(x,y)\in G\big\}
=
\bigl(-\rho_G(y),\rho_G(y)\bigr).
\]
Fix \(\lambda>0\) and define $$\Sigma_\lambda:=G\cap\{x>\lambda\} \text{ and } R_\lambda(x,y):=(2\lambda-x,y).$$
 We claim that
$R_\lambda(\Sigma_\lambda)\subset G$.
Indeed, if \((x,y)\in\Sigma_\lambda\), then $\lambda<x<\rho_G(y)$.
Since \(\lambda>0\), we have $-\rho_G(y)
<
2\lambda-x
<
\rho_G(y)$,
which proves $R_\lambda(\Sigma_\lambda)\subset G$.
Define
$$w_\lambda(x,y)
:=
u\bigl(R_\lambda(x,y)\bigr)-u(x,y), \quad \text{for }(x,y)\in\Sigma_\lambda.$$
Then $\Delta w_\lambda=0$ in $\Sigma_\lambda$.
Moreover, $w_\lambda=0$ on
$\partial\Sigma_\lambda\cap\{x=\lambda\}$, while
$w_\lambda\geq0$ on
$\partial\Sigma_\lambda\cap\partial G$.
Hence, by the maximum principle,
\begin{equation}
\label{eq:w-nonnegative}
w_{\lambda}\geq0
\quad \text{in }\Sigma_{\lambda}.
\end{equation}
 Let \(\mathcal{C}\) be any nonempty
connected component of \(\Sigma_\lambda\), and choose
\((x_0,y_0)\in \mathcal{C}\). Then
$\lambda<x_0<\rho_G(y_0)$,
and the  segment
$\bigl\{(x,y_0):x_0<x<\rho_G(y_0)\bigr\}$
is contained in \(\mathcal{C}\). Therefore,
$$p:=\bigl(\rho_G(y_0),y_0\bigr)
\in\partial \mathcal{C}\cap\partial G.$$
Moreover,
$R_\lambda(p)
=
\bigl(2\lambda-\rho_G(y_0),y_0\bigr)$.
Since \(\lambda<\rho_G(y_0)\), we have
$-\rho_G(y_0)
<
2\lambda-\rho_G(y_0)
<
\rho_G(y_0)$,
and hence
$R_\lambda(p)\in G$.
By the strong maximum principle, we obtain
$u\bigl(R_\lambda(p)\bigr)>0$.
Since \(u(p)=0\),
$$w_\lambda(p)
=
u\bigl(R_\lambda(p)\bigr)-u(p)>0.$$
Thus \(w_\lambda\) is not identically zero in \(\mathcal{C}\). Combining this with
\eqref{eq:w-nonnegative} and the strong maximum principle, we obtain $w_\lambda>0$ in $\mathcal{C}$.
Since \(\mathcal{C}\) was arbitrary, we have $$w_\lambda>0 \quad\text{in } \Sigma_\lambda.$$
 Let \((x,y)\in G\) with \(x>0\) and choose
$\lambda={x}/{2}$.
Then \((x,y)\in\Sigma_\lambda\) and
$R_\lambda(x,y)=(0,y)$.
Therefore, 
\[
u(0,y)-u(x,y)
=
w_\lambda(x,y)>0.
\]
The case \(x<0\) follows from the symmetry of \(u\) with respect to the
\(y\)-axis.
\end{proof}

\subsection{Mosco convergence }$\,$  \vskip 0.2cm  We recall the definition of Mosco convergence, and see \cite{Attouch1984,BV2000} for  further properties.
    \begin{Def}
 Let $E, ~E_n \subset B$. 
  We say that $H_0^1(E_n)$ converges to $H_0^1(E)$ in the  sense of Mosco  if the following two conditions hold:
  \begin{enumerate}[label=(M\arabic*), ref=(M\arabic*)]
    \item \label{M1}(Mosco-liminf condition)  If $u_n \in H_0^1(E_n)$ and ${u}_n\rightharpoonup{u}
~\text{weakly in }H_0^1(B)$,
 then $u \in H_0^1(E)$.
    \item \label{M2}(Mosco-limsup condition) For any given $u \in H_0^1(E)$, there exists  $u_n \in H_0^1(E_n)$ such that \[{u}_n \to {u}
\quad \text{strongly in } H_0^1(B).\]    
  \end{enumerate}
  \end{Def}

\begin{Lem}\label{Mosco2}
For every $a_0 \in [0,1]$, we have
\begin{equation}\label{Mosco1-1}
H_0^1(D_{a})
\xrightarrow{M}
H_0^1(D_{a_0})
\quad \text{as}~~  a \to a_0.
\end{equation}
\end{Lem}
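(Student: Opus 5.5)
The plan is to verify \ref{M1} and \ref{M2} directly for an arbitrary sequence $a_n\to a_0$, using two facts. First, the slits $S_a$ are nested: $a\mapsto D_a$ is increasing. Second, $H_0^1$ of an arbitrary open subset of $B$ can be described through quasi-continuous representatives. Here $D_0=B\setminus\bigl((-1,1)\times\{0\}\bigr)$ and $D_1=B$. Throughout, $H_0^1(E)$ is viewed as a closed subspace of $H_0^1(B)$; in particular it is weakly closed.

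\emph{Limsup condition \ref{M2}.} Let $u\in H_0^1(D_{a_0})$ and $\eta>0$. Choose $\varphi\in C_c^\infty(D_{a_0})$ with $\|u-\varphi\|_{H_0^1(B)}<\eta$. Since $K:=\operatorname{supp}\varphi$ is compact in $D_{a_0}$, it has positive distance $d$ from $\overline{S_{a_0}}\cup\partial B$. If $|a_n-a_0|<d$, then $S_{a_n}$ lies within distance $|a_n-a_0|$ of $\overline{S_{a_0}}$, so $S_{a_n}\cap K=\emptyset$. Hence $\varphi\in C_c^\infty(D_{a_n})$ for all large $n$. The same holds for $a_0=1$, where $S_{a_n}$ shrinks to the empty set near $\partial B$. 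A standard diagonal argument over $\eta=1/k$ then produces $u_n\in H_0^1(D_{a_n})$ with $u_n\to u$ strongly in $H_0^1(B)$.

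\emph{Liminf condition \ref{M1}.} Let $u_n\in H_0^1(D_{a_n})$ with $u_n\rightharpoonup u$ weakly. Set $b_m:=\sup_{n\ge m}a_n$, so that $b_m\downarrow a_0$ and $b_m\ge a_0$. For $n\ge m$ we have $a_n\le b_m$, hence $D_{a_n}\subset D_{b_m}$ and $u_n\in H_0^1(D_{b_m})$. Since $H_0^1(D_{b_m})$ is weakly closed, $u\in H_0^1(D_{b_m})$ for every $m$. It therefore suffices to show
\[
\bigcap_{m} H_0^1(D_{b_m}) = H_0^1(D_{a_0}).
\]
The inclusion $\supseteq$ is trivial. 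If $b_m=a_0$ for some $m$, we are already done, so assume $b_m>a_0$ for all $m$. For the inclusion $\subseteq$, I will use the characterization (Hedberg / Bagby, cf. \cite{Attouch1984,BV2000}), valid for every open $E\subset B$:
\[
H_0^1(E)=\{v\in H_0^1(B):\ \tilde v=0 \text{ quasi-everywhere on } B\setminus E\},
\]
where $\tilde v$ is the quasi-continuous representative. A function in the intersection then has $\tilde v=0$ q.e. on $\bigcup_m \overline{S_{b_m}}\cap B$. This union equals $\overline{S_{a_0}}\cap B$ minus the two points $(\pm a_0,0)$. In the plane a single point has zero $H^1$-capacity, so $\tilde v=0$ q.e. on $B\setminus D_{a_0}$, i.e.\ $v\in H_0^1(D_{a_0})$. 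When $a_0=0$ the exceptional set is just the origin, and when $a_0=1$ no points are missing.

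\emph{Main obstacle.} The only nontrivial point is the last step. Weak limits a priori only vanish on the open part of the limit slit, and one must see that the missing tip points $(\pm a_0,0)$ impose no extra condition. I would handle this by invoking the quasi-everywhere characterization of $H_0^1$ together with $\operatorname{cap}(\{p\})=0$ in $\mathbb R^2$. If one prefers to avoid that machinery, an alternative is to multiply $v$ by the logarithmic cutoffs $\psi_\delta$, equal to $1$ outside $B_{\sqrt\delta}(p)$ and $0$ on $B_\delta(p)$, which have $\|\nabla\psi_\delta\|_{L^2}^2\lesssim 1/|\log\delta|\to0$. This reduces the claim to functions vanishing near $p$, where the slit $S_{b_m}$ already covers the relevant part of $S_{a_0}$.
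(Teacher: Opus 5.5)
Your proposal is correct and follows the paper's proof essentially step for step. For (M2) you use density of $C_c^\infty(D_{a_0})$ together with a diagonal argument. For (M1) you use the nesting $D_{a_n}\subset D_{b_m}$, weak closedness of $H_0^1(D_{b_m})$, the quasi-everywhere characterization of $H_0^1$, countable subadditivity, and the zero capacity of the tip points $(\pm a_0,0)$. The only cosmetic difference is your explicit choice $b_m=\sup_{n\ge m}a_n$, where the paper takes an arbitrary strictly decreasing sequence $b_m\downarrow a_0$ in $(a_0,1)$ and then uses $a_n<b_m$ for $n$ large.
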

\begin{proof}
Define $$S_0:= (-1,1)\times \{0\}, ~D_0=B \setminus S_0, ~ \text{and} ~ D_1=B.$$
As usual, we identify $H_0^1(D_a)$ with a closed linear subspace of
$H_0^1(B)$ by zero extension. Let $\{a_n\}\subset [0,1]$ be any sequence such that $a_n\to a_0$.
We verify the two conditions in the
definition of Mosco convergence.
\vskip0.1cm
\par We first verify \ref{M1}. Assume \(0\leq a_0<1\) and let $v_n\in H_0^1(D_{a_n})$ satisfy
\begin{equation}\label{weakly1}
    v_n\rightharpoonup v \quad \text{weakly in } H_0^1(B).
\end{equation}
We need to prove that $v\in H_0^1(D_{a_0})$. Choose a strictly decreasing sequence \(\{b_m\}\subset(a_0,1)\)
such that $b_m\downarrow a_0$. For each fixed \(m\), since \(a_n\to a_0<b_m\), there exists
\(N_m\in\mathbb N\) such that $a_n<b_m$ for all $n\geq N_m$. Thus,
$$S_{b_m}\subset S_{a_n},~ D_{a_n} \subset D_{b_m}, \text{ and } H_0^1(D_{a_n})
\subset
H_0^1(D_{b_m}).$$
The space $H_0^1(D_{b_m})$ is a closed linear subspace of
$H_0^1(B)$ and is therefore weakly closed. Hence \eqref{weakly1} gives $v\in H_0^1(D_{b_m})$
for any fixed $m$.
Let $\widetilde v$ denote the quasi-continuous representative of
$v$. 
The capacitary characterization of \(H_0^1(D_{b_m})\) yields
\[
\widetilde v=0
\quad \text{quasi-everywhere on }S_{b_m}~~\text{for every }m.\] For each $m$, there exists
$N_m'\subset S_{b_m}$ with
$\operatorname{Cap}_2(N_m',B)=0$
such that $\widetilde v=0$ on $S_{b_m}\setminus N_m'$. By countable
subadditivity of the relative Sobolev capacity,
\[
\operatorname{Cap}_2\left(\bigcup_{m=1}^{\infty}N_m',B\right)=0.
\]
Moreover,  $\displaystyle \bigcup_{m=1}^{\infty}S_{b_m}
=
S_{a_0}\setminus
\bigl\{(-a_0,0),(a_0,0)\bigr\}$.
Since points have zero $H^1$-capacity in dimension two
(see \cite{Mazya2011}),
\[
\operatorname{Cap}_2
\bigl(\{(-a_0,0),(a_0,0)\},B\bigr)=0.
\]
Consequently,
\[
\widetilde v=0
\quad\text{quasi-everywhere on }S_{a_0}.\]
Hence the capacitary characterization of $H_0^1$ gives
$v\in H_0^1(D_{a_0})$.
If $a_0=1$, then $D_{a_0}=B$ and the weak limit already satisfies
$v\in H_0^1(B)=H_0^1(D_1)$. Thus the Mosco-liminf condition holds for
every $a_0\in[0,1]$.
\vskip0.1cm

 Next, we verify \ref{M2}. Assume  that $0\leq a_0<1$. By a standard diagonal argument and the density of
$C_c^\infty(D_{a_0})$ in $H_0^1(D_{a_0})$, it suffices to verify
the recovery condition for $v\in C_c^\infty(D_{a_0})$. Since $\operatorname{supp}v\subset \subset D_{a_0}$ and $a_n\to a_0$, we have  $\operatorname{supp}v\cap S_{a_n}=\varnothing$  for all sufficiently large $n$. Hence 
\[ v\in C_c^\infty(D_{a_n})\subset H_0^1(D_{a_n}). \]
The recovery condition follows by taking $v_n=v$. The case $a_0=1$ is similar. For every $v\in C_c^\infty(B)$, the compactness of $\operatorname{supp}v$ and $a_n\to1$ imply that $\operatorname{supp}v\cap S_{a_n}=\varnothing$ for all sufficiently large $n$. This proves \ref{M2}.
Both Mosco conditions are therefore satisfied, and
\eqref{Mosco1-1} follows.
\end{proof}

\begin{Lem}\label{Mosco1}
For any $a \in (0,1)$, as $\epsilon \to 0$, we have
\[
H_0^1(\Omega_{a,\epsilon})
\xrightarrow{M}
H_0^1(D_a).
\]
\end{Lem}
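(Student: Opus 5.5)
We verify the two Mosco conditions directly, using monotonicity \ref{C4}, the Hausdorff convergence \ref{C3}, and the capacitary argument already used in Lemma~\ref{Mosco2}. Let $\epsilon_n\to0$ be arbitrary and write $\Omega_n:=\Omega_{a,\epsilon_n}$. By \ref{C2}, $H_0^1(\Omega_n)\subset H_0^1(D_a)$ for every $n$, and $H_0^1(D_a)$ is a closed, hence weakly closed, subspace of $H_0^1(B)$.

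\emph{Step 1: \ref{M1}.} Let $v_n\in H_0^1(\Omega_n)$ with $v_n\rightharpoonup v$ weakly in $H_0^1(B)$. Since each $v_n\in H_0^1(D_a)$, weak closedness gives $v\in H_0^1(D_a)$ at once. Inclusion \ref{C2} therefore makes the liminf condition trivial. There is no need to pass to capacities here, unlike Lemma~\ref{Mosco2}, where the approximating domains were not contained in the limit domain.

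\emph{Step 2: \ref{M2}.} By density of $C_c^\infty(D_a)$ in $H_0^1(D_a)$ and a diagonal argument, it suffices to treat $v\in C_c^\infty(D_a)$. Set $K:=\operatorname{supp}v$, a compact subset of $D_a$. The sets $\{\Omega_{a,\epsilon}\}_{0<\epsilon<\epsilon_0(a)}$ are open, increase as $\epsilon\downarrow0$ by \ref{C4}, and have union $D_a$. They therefore form an open cover of $K$ that is directed by inclusion. Compactness gives some $\bar\epsilon>0$ with $K\subset\Omega_{a,\bar\epsilon}$, and monotonicity then gives $K\subset\Omega_{a,\epsilon}$ for all $\epsilon<\bar\epsilon$. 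Hence $v\in C_c^\infty(\Omega_n)$ for all large $n$, and $v_n:=v$ is a recovery sequence.

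\emph{Obstacle.} The only real point is justifying that every compact $K\Subset D_a$ is eventually contained in $\Omega_{a,\epsilon}$. This must hold even near the slit tips $(\pm a,0)$ and near $\partial B$, where $\Omega_{a,\epsilon}$ differs from $D_a$. Hypothesis \ref{C4}, the exhaustion $\bigcup\Omega_{a,\epsilon}=D_a$ by a monotone family of open sets, is exactly what makes this immediate, so I would rely on it rather than on \ref{C3}.

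If one wanted to avoid \ref{C4}, a fallback would use \ref{C3} together with $\Omega_{a,\epsilon}\subset B$ and the graph description \eqref{quyuomiga}. One would check that $\operatorname{dist}(K,\overline{S_a}\cup\partial B)>0$ forces $K\cap\overline{T_{a,\epsilon}}=\varnothing$ once $d_H(\overline{T_{a,\epsilon}},\overline{S_a})<\operatorname{dist}(K,\overline{S_a})$. In that case, $K\subset B\setminus\overline{T_{a,\epsilon}}=\Omega_{a,\epsilon}$, which again yields \ref{M2}.
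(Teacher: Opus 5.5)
Your proof is correct and follows the paper's argument essentially verbatim. You get \ref{M1} at once from $\Omega_{a,\epsilon}\subset D_a$ and the weak closedness of $H_0^1(D_a)$ in $H_0^1(B)$, and you reduce \ref{M2} by density to $C_c^\infty(D_a)$, where compactness of the support and the monotone exhaustion in \ref{C4} yield the constant recovery sequence. Your alternative route through \ref{C3} also works, but it is not needed.
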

\begin{proof}
We verify the two Mosco conditions. For \ref{M1}, let $\epsilon_n \to 0$ and  $v_n \in H_0^1(\Omega_{a,\epsilon_n})$
satisfy
\[
v_n \rightharpoonup v
\quad \text{weakly in } H_0^1(B).
\]
Since $H_0^1(D_a)$ is a closed linear subspace of $H_0^1(B)$,
it is weakly closed. Hence
$v\in H_0^1(D_a)$
and \ref{M1} follows.
For \ref{M2}, by a standard diagonal
argument and the density of $C_c^\infty(D_a)$ in $H_0^1(D_a)$,
it suffices to verify the recovery condition for
$\varphi\in C_c^\infty(D_a)$. Note that \ref{C4} implies
\[
\Omega_{a,\epsilon}\uparrow D_a
\quad \text{as } \epsilon\to 0.
\]
Since $\operatorname{supp}\varphi\subset\subset D_a$, the compactness of
$\operatorname{supp}\varphi$ yields $\epsilon_\varphi>0$ such that
\[
\operatorname{supp}\varphi\subset\Omega_{a,\epsilon}
\quad
\text{whenever }0<\epsilon<\epsilon_\varphi.
\]
Therefore, for all sufficiently small $\epsilon$,
\[
\varphi\in C_c^\infty(\Omega_{a,\epsilon})
\subset H_0^1(\Omega_{a,\epsilon}).
\]
Thus the recovery condition
holds on the dense subspace $C_c^\infty(D_a)$, and hence \ref{M2}
follows by the standard diagonal argument.
Thus,
\[
H_0^1(\Omega_{a,\epsilon})
\xrightarrow{M}
H_0^1(D_a)
\quad \text{as } \epsilon\to 0.
\]
\end{proof}

\subsection{Results on local regularity}$\,$\vskip 0.1cm
We conclude the section with two local regularity consequences of the preceding Mosco convergence results.

 \begin{Lem}\label{smooth-convergence}
For every $a\in(0,1)$, we have
\[
        u_{a,\eps}\to u_a\quad\hbox{strongly in }H^1_0(B)
        \quad\text{as }\eps\to0.
\]
Moreover, for every compact set $K\subset \subset D_a$, we have
\[
        u_{a,\eps}\to u_a\quad\text{in }C^2(K)\quad\text{as }\eps\to0.
\]
\end{Lem}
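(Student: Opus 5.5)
The plan is to combine the standard variational consequence of Mosco convergence with interior elliptic estimates. Write $J_\epsilon$ and $J_a$ for the functional $J$ in \eqref{Ju} restricted to $H_0^1(\Omega_{a,\epsilon})$ and to $H_0^1(D_a)$. Then $u_{a,\epsilon}$ and $u_a$ are the unique minimizers of these restricted functionals. Testing the weak equation with $u_{a,\epsilon}$ itself gives
\[
\|\nabla u_{a,\epsilon}\|_{L^2(B)}^2=\int_B u_{a,\epsilon}\dd x\le C\|\nabla u_{a,\epsilon}\|_{L^2(B)}
\]
by Poincaré on $B$, so the family is bounded in $H_0^1(B)$.

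For the $H^1$ convergence, take any sequence $\epsilon_n\to0$ and extract a subsequence with $u_{a,\epsilon_n}\rightharpoonup w$ weakly in $H_0^1(B)$, hence strongly in $L^2(B)$ by Rellich. By \ref{M1} in Lemma \ref{Mosco1}, $w\in H_0^1(D_a)$. Next fix $\varphi\in H_0^1(D_a)$ and use \ref{M2} to choose $\varphi_n\in H_0^1(\Omega_{a,\epsilon_n})$ with $\varphi_n\to\varphi$ strongly. Passing to the limit in
\[
\int_B\nabla u_{a,\epsilon_n}\cdot\nabla\varphi_n\dd x=\int_B\varphi_n\dd x
\]
(a weak–strong pairing) yields $\int_B\nabla w\cdot\nabla\varphi=\int_B\varphi$. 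So $w$ is the weak solution on $D_a$, and by uniqueness $w=u_a$. Strong convergence then follows from convergence of norms:
\[
\|\nabla u_{a,\epsilon_n}\|_{L^2}^2=\int_B u_{a,\epsilon_n}\to\int_B u_a=\|\nabla u_a\|_{L^2}^2.
\]
The limit is independent of the subsequence, so the whole family converges as $\epsilon\to0$. Alternatively, one can use monotonicity: by \ref{C4} and the argument of Lemma \ref{Lem2.2}, $u_{a,\epsilon}$ increases as $\epsilon$ decreases.

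For the $C^2(K)$ convergence, fix $K\Subset D_a$ and choose an open set $U$ with $K\Subset U\Subset D_a$. By \ref{C4} and compactness of $\overline U$ (the $\Omega_{a,\epsilon}$ are an increasing open cover of $D_a$), $\overline U\subset\Omega_{a,\epsilon}$ for all small $\epsilon$. Write $q(x,y)=-(x^2+y^2)/4$, so that $-\Delta q=1$. On $U$ the function $h_\epsilon:=u_{a,\epsilon}-u_a$ is harmonic, being the difference of two solutions of $-\Delta u=1$. It tends to $0$ in $L^2(U)$, in fact in $H^1$. Interior estimates for harmonic functions, via the mean value property on balls of radius $\tfrac12\dist(K,\partial U)$, bound $\|h_\epsilon\|_{C^k(K)}\le C_k\|h_\epsilon\|_{L^1(U)}$. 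Hence $h_\epsilon\to0$ in $C^2(K)$, which is the claim. One could instead compare each of $u_{a,\epsilon}$ and $u_a$ with $q$ separately, but the difference formulation avoids this.

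The main obstacle is the identification of the weak limit. This rests entirely on both Mosco conditions of Lemma \ref{Mosco1}: \ref{M1} places $w$ in the correct space and \ref{M2} allows arbitrary test functions in $H_0^1(D_a)$. Given that lemma, this step is routine, and the interior $C^2$ step is elementary because the difference of the two solutions is harmonic.
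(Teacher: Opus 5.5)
Your proposal is correct and follows the paper's proof in structure. You use the uniform $H_0^1(B)$ bound from testing with $u_{a,\epsilon}$, \ref{M1} to place the weak limit in $H_0^1(D_a)$, \ref{M2} to identify it as $u_a$, the convergence $\|\nabla u_{a,\epsilon_n}\|_{L^2}^2=\int_B u_{a,\epsilon_n}\to\|\nabla u_a\|_{L^2}^2$ to upgrade to strong convergence, and interior harmonic estimates for $u_{a,\epsilon}-u_a$ on $K\subset\subset U\subset\subset D_a$. The only variation is the identification step: you pass to the limit in the weak equation with recovery sequences for arbitrary test functions, whereas the paper recovers $u_a$ itself and compares energies via weak lower semicontinuity of $J$ and uniqueness of the minimizer. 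Both are standard consequences of Mosco convergence and work equally well here.
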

\begin{proof}
By  \ref{C4}, we have $\Omega_{a,\epsilon}\uparrow D_a$ as $\epsilon\to 0$. 
We view $H^1_0(\Omega_{a,\epsilon})$ and $H^1_0(D_a)$ as closed subspaces of $H^1_0(B)$ via zero extension.
The functions $u_{a,\epsilon}$ and $u_a$ are the unique minimizers of $J$ over $H^1_0(\Omega_{a,\epsilon})$ and $H^1_0(D_a)$, respectively, where $J$ is given in \eqref{Ju}.
The Euler--Lagrange equation for $u_{a,\epsilon}$ is
\begin{equation}\label{Euler}
    \int_B \nabla u_{a,\epsilon} \cdot \nabla v \, dz
    = \int_B v \, dz,
    \quad \text{for all } v \in H^1_0(\Omega_{a,\epsilon}).
\end{equation}
Choosing $v=u_{a,\epsilon}$ in \eqref{Euler} yields
\[\displaystyle\int_B |\nabla u_{a,\epsilon}|^2 \, dz = \int_B u_{a,\epsilon} \, dz .\]
By  H\"older's inequality and  Poincar\'e's inequality, 
\[
\int_B |\nabla u_{a,\epsilon}|^2 \, dz
\le |B|^{1/2} \|u_{a,\epsilon}\|_{L^2(B)}
\le C \|\nabla u_{a,\epsilon}\|_{L^2(B)} .
\]
Hence $\|u_{a,\epsilon}\|_{H^1_0(B)}\le C$ uniformly for all sufficiently small $\epsilon>0$.
Let $\epsilon_n\to 0$ be arbitrary. Passing to a subsequence,
still denoted by $\epsilon_n$, we may assume that
\[
u_{a,\epsilon_n}\rightharpoonup u
\quad\text{weakly in }H_0^1(B)
\]
for some $u\in H_0^1(B)$. Lemma~\ref{Mosco1} and \ref{M1} yield $u\in H_0^1(D_a)$.
Since $J$ is weakly lower semicontinuous on $H_0^1(B)$,
\[
J(u)
\le
\liminf_{n\to\infty}J(u_{a,\epsilon_n}).
\]
On the other hand, by \ref{M2}, there exists
$w_n\in H_0^1(\Omega_{a,\epsilon_n})$ such that
\[
w_n\to u_a
\quad\text{strongly in }H_0^1(B).
\]
Since $u_{a,\epsilon_n}$ minimizes $J$ over
$H_0^1(\Omega_{a,\epsilon_n})$, we have $J(u_{a,\epsilon_n})\le J(w_n)$.
Therefore,
\[
J(u)
\le
\liminf_{n\to\infty}J(u_{a,\epsilon_n})
\le
\limsup_{n\to\infty}J(u_{a,\epsilon_n})
\le
J(u_a).
\]
Since $u\in H_0^1(D_a)$ and $u_a$ is the unique minimizer of $J$
over $H_0^1(D_a)$, we also have
$J(u_a)\le J(u)$.
Consequently,
$u=u_a$
and
$J(u_{a,\epsilon_n})\to J(u_a)$.
The compact embedding
$H_0^1(B)\hookrightarrow L^2(B)$ gives
\[
u_{a,\epsilon_n}\to u_a
\quad\text{strongly in }L^2(B).
\]
In particular,
$\displaystyle\int_B u_{a,\epsilon_n}\,dx
\to
\int_B u_a\,dx$.
Therefore
\[
\|\nabla u_{a,\epsilon_n}\|_{L^2(B)}^2
\to
\|\nabla u_a\|_{L^2(B)}^2.
\]
Together with the weak convergence, this yields
\[
u_{a,\epsilon_n}\to u_a
\quad\text{strongly in }H_0^1(B).
\]
Since the sequence $\{\epsilon_n\}$ was arbitrary, we conclude that
\[
u_{a,\epsilon}\to u_a
\quad\text{strongly in }H_0^1(B).
\]
It remains to prove the local $C^2$ convergence. Let $K\subset\subset D_a$
and choose an open set $U$ such that
$K\subset \subset U\subset \subset D_a$.
By \ref{C4} and the compactness of $\overline{U}$, we have
$\overline{U}\subset\Omega_{a,\epsilon}$ for all sufficiently small
$\epsilon>0$. Hence
\[
-\Delta(u_{a,\epsilon}-u_a)=0
\quad\text{in }U.
\]
By the interior estimates for harmonic functions,
\[
\|u_{a,\epsilon}-u_a\|_{C^{2,\alpha}(K)}
\le
C_{K,U,\alpha}\,
\|u_{a,\epsilon}-u_a\|_{L^2(U)}.
\]
The right-hand side tends to zero as $\epsilon\to0$.
Therefore,
\[
u_{a,\epsilon}\to u_a
\quad\text{in }C^2(K).
\]
\end{proof}
\begin{Lem}\label{continuity-a}
The map $a\mapsto u_a$ is continuous from $[0,1]$ into $H_0^1(B)$.
Moreover, for every $a_0\in[0,1]$ and every
$K\subset\subset D_{a_0}$,
\[
u_a\to u_{a_0}
\quad\text{in }C^2(K)
\quad\text{as }a\to a_0.
\]
Consequently, $\mu$ and $\Lambda$, defined in \eqref{mu(a)}
and \eqref{Lambda-formula}, respectively, are continuous on $(0,1]$.
\end{Lem}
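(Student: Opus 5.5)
We follow the same pattern as Lemma~\ref{smooth-convergence}, now using Lemma~\ref{Mosco2} in place of Lemma~\ref{Mosco1}.

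\textbf{Step 1: uniform bound and weak limit.} Fix $a_0\in[0,1]$ and a sequence $a_n\to a_0$ in $[0,1]$. The function $u_{a_n}$ is the unique minimizer of $J$ over $H_0^1(D_{a_n})$. Testing its Euler--Lagrange equation with $u_{a_n}$ itself, and then using H\"older and Poincar\'e on $B$, gives $\|u_{a_n}\|_{H_0^1(B)}\le C$ with $C$ independent of $n$. Hence, after passing to a subsequence, $u_{a_n}\rightharpoonup u$ weakly in $H_0^1(B)$. By \ref{M1} in Lemma~\ref{Mosco2}, the limit satisfies $u\in H_0^1(D_{a_0})$.

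\textbf{Step 2: identify the limit and upgrade to strong convergence.} By \ref{M2} there are $w_n\in H_0^1(D_{a_n})$ with $w_n\to u_{a_0}$ strongly. Weak lower semicontinuity of $J$ and minimality of $u_{a_n}$ give
\[
J(u)\le\liminf_n J(u_{a_n})\le\limsup_n J(u_{a_n})\le\lim_n J(w_n)=J(u_{a_0}).
\]
Since $u\in H_0^1(D_{a_0})$ and $u_{a_0}$ is the unique minimizer there, $u=u_{a_0}$ and $J(u_{a_n})\to J(u_{a_0})$. Rellich compactness gives $\int_B u_{a_n}\to\int_B u_{a_0}$, so the $H_0^1$ norms converge. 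Together with weak convergence this yields strong convergence. The limit is independent of the subsequence, so the whole sequence converges, and the map $a\mapsto u_a$ is continuous from $[0,1]$ into $H_0^1(B)$.

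\textbf{Step 3: local $C^2$ convergence.} Let $K\subset\subset D_{a_0}$ and choose an open $U$ with $K\subset\subset U\subset\subset D_{a_0}$. Since $\overline U$ is compact and disjoint from $\overline{S_{a_0}}\cap B$, we have $\dist(\overline U,S_{a})>0$ for all $a$ near $a_0$. For $a_0<1$ this holds because $S_a\to S_{a_0}$ in the Hausdorff sense within $B$, and the endpoints $(\pm a_0,0)$ lie in $\overline{S_{a_0}}$. For $a_0=1$ it holds because $S_a$ lies in $\{|x|\ge a\}$, which is eventually away from $\overline U$. Thus $\overline U\subset D_a$ for $a$ close to $a_0$, and $u_a-u_{a_0}$ is harmonic in $U$. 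Interior estimates for harmonic functions then give
\[
\|u_a-u_{a_0}\|_{C^{2,\alpha}(K)}\le C\,\|u_a-u_{a_0}\|_{L^2(U)}\to0 .
\]

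\textbf{Step 4: continuity of $\mu$ and $\Lambda$.} For $a_0\in(0,1]$, take $K$ a small closed disk around $0$; it lies in $D_{a_0}$ because $a_0>0$. The $C^2$ convergence gives continuity of $\mu(a)=\partial_{yy}u_a(0)$. Continuity of $\Lambda$ then follows from \eqref{Lambda-formula}, since it is the maximum of continuous functions.

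The only delicate point is the geometric check in Step 3 that $\overline U$ avoids $S_a$ uniformly for $a$ near $a_0$, especially at the moving slit tips and at $a_0=1$. Everything else repeats the argument of Lemma~\ref{smooth-convergence} verbatim.
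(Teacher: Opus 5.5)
Your proposal is correct and follows the paper's proof essentially step for step. The paper also reruns the variational argument of Lemma~\ref{smooth-convergence} with $D_{a_n}$ in place of $\Omega_{a,\epsilon_n}$ (via Lemma~\ref{Mosco2}), then uses $\overline U\subset D_a$ for $a$ near $a_0$ together with interior harmonic estimates to get $C^2(K)$ convergence and hence continuity of $\mu$ and $\Lambda$ on $(0,1]$. Your explicit check that $\overline U$ stays away from $S_a$, including at the moving tips and in the case $a_0=1$, fills in a detail the paper only asserts.
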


\begin{proof}
Fix $a_0\in[0,1]$ and let $a_n\to a_0$. By
Lemma~\ref{Mosco2},
\[
H_0^1(D_{a_n})\xrightarrow{M}H_0^1(D_{a_0}).
\]
Repeating the variational argument in the proof of
Lemma~\ref{smooth-convergence}, with $D_{a_n}$ in place of
$\Omega_{a,\epsilon_n}$, gives
\[
u_{a_n}\to u_{a_0}
\quad\text{strongly in }H_0^1(B).
\]
Since the sequence $a_n\to a_0$ was arbitrary, this proves the first assertion.

Now let $K\subset\subset U\subset\subset D_{a_0}$.
When the parameter $a$ is sufficiently close to $a_0$, we have $\overline U\subset D_a$, and hence
\[
-\Delta(u_a-u_{a_0})=0\quad\text{in }U.
\]
Thus, for every $\alpha\in(0,1)$,
\[
\|u_a-u_{a_0}\|_{C^{2,\alpha}(K)}
\le C_{K,U,\alpha}
   \|u_a-u_{a_0}\|_{L^2(U)}
\to 0.
\]
This proves the local $C^2$ convergence. For $a_0\in(0,1]$, choosing $K\subset\subset D_{a_0}$ with
$0\in \operatorname{int}K$ yields
\[
\partial_{yy}u_a(0)\to\partial_{yy}u_{a_0}(0) \quad \text{as }a\to a_0.
\]
Then $\mu(a)\to\mu(a_0)$. Thus $\mu$ is continuous on $(0,1]$, and the continuity of
$\Lambda$ follows from \eqref{Lambda-formula}.
\end{proof}

\section{Change of Hessian signature at the origin}\label{sign change}
In this section, we consider the Hessian at the origin of the torsion function $u_a$ on $\Omega = D_a$. Proposition \ref{small-a-positive} and Proposition \ref{large-a-negative} show that $\Lambda(a)$ is positive for small $a$ and negative for $a$ close to $1$. Furthermore, Proposition~\ref{small-a-positive} will also be used to prove Theorem~\ref{Thm1.2} and Corollary~\ref{cor1.3}.

\begin{Prop}\label{small-a-positive}
As $a\downarrow0$, 
\begin{equation}\label{mu-positive-small-a}
        \mu(a)=\partial_{yy}u_a(0)=\frac{\beta}{a}+o\left(\frac1a\right)
\end{equation}
for $\beta:=\partial_yW(0,0)$, where $W$ is the solution of
\begin{equation}\label{WWWW}
    -\Delta W=1\quad\text{in }B^+,
        \quad W=0\quad\text{on }\partial B^+.
\end{equation}
In particular, $\Lambda(a)>0$ for  sufficiently small $a$.
\end{Prop}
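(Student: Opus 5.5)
Rescale the problem around the small gap. Set $v_a(z):=a^{-1}u_a(az)$ on $D_a/a=B_{1/a}\setminus\bigl(((-1/a,-1]\cup[1,1/a))\times\{0\}\bigr)$. Then $-\Delta v_a=a$, and $\partial_{yy}v_a(0)=a\,\partial_{yy}u_a(0)=a\mu(a)$. The claim is therefore that $\partial_{yy}v_a(0)\to\beta$.

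\textbf{Step 1: the limit profile.} As $a\to0$, the domain $D_0=B\setminus S_0$ splits into the two half-discs $B^\pm$. By Lemma~\ref{continuity-a}, $u_a\to u_0$ in $H_0^1(B)$. Since $u_0=W$ on $B^+$ and $u_0(x,-y)=W(x,y)$ on $B^-$, the function $u_0$ is the even reflection of $W$.

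Near the origin, write $u_a(x,y)=u_0(x,y)+h_a(x,y)$. Away from the slit, $u_0$ is approximately $\beta|y|-\tfrac12y^2+O(|z|^2)$, because $W$ is smooth up to the flat boundary with $W(x,0)=0$, $\partial_yW(0,0)=\beta>0$ by Hopf, and $\partial_{yy}W(0,0)=-1$. So $u_a$ should look like the solution of the gap problem with the kink $\beta|y|$ smoothed out through the window $(-a,a)\times\{0\}$.

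\textbf{Step 2: the canonical corrector.} Let $\Phi$ be the harmonic function in $\R^2\setminus\bigl(((-\infty,-1]\cup[1,\infty))\times\{0\}\bigr)$ that vanishes on the two half-lines and satisfies $\Phi\sim\beta|y|$ at infinity. Explicitly, $\Phi=\beta\,\operatorname{Re}\sqrt{1-\zeta^2}$ with a suitable branch, i.e.\ $\beta\,\operatorname{Im}\sqrt{\zeta^2-1}$ made even in $y$. Along the $y$-axis, $\Phi(0,y)=\beta\sqrt{1+y^2}$, so $\partial_{yy}\Phi(0)=\beta$.

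The aim is to show $v_a\to\Phi$ in $C^2_{\mathrm{loc}}$ near $0$. The consequence is $a\mu(a)=\partial_{yy}v_a(0)\to\beta$, which is \eqref{mu-positive-small-a}.

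\textbf{Step 3: the comparison argument.} Construct explicit barriers $a\Phi(z/a)+\psi_a^\pm$ in the region $B_r\cap D_a$ for a fixed small $r$. Here $\psi_a^\pm$ absorb three things:
\begin{itemize}
\item the source term, via $-\tfrac12y^2$ or $-\tfrac14|z|^2$;
\item the mismatch $a\Phi(z/a)-\beta|y|=O(a^2/|z|)$ on $|z|=r$;
\item the error $u_0-\beta|y|=O(r^2)$ on $|z|=r$.
\end{itemize}
Compare $u_a$ with these barriers using Lemma~\ref{capacity}, which gives $u_a\in C(\overline{D_a})$ with $u_a=0$ on the slit, together with the maximum principle.

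In addition, $u_a\to u_0$ uniformly on $\partial B_r$. This requires uniform convergence near $\partial B_r\cap\{y=0\}$, which I would get from the uniform boundary regularity of Lemma~\ref{capacity}, or from the Lipschitz bound $u_a\le C|y|$ obtained by comparison with a half-disc barrier.

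The result should be $|u_a-a\Phi(\cdot/a)|\le\eta(r)a+C(r)o(a)$ on $B_{2a}$, with $\eta(r)\to0$ as $r\to0$. After rescaling, this says $v_a\to\Phi$ uniformly on $B_2$. Since $v_a-\Phi$ satisfies $-\Delta(v_a-\Phi)=a$ away from the slit, interior estimates upgrade this to $C^2$ convergence at $0$.

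\textbf{Main obstacle.} The delicate step is the quantitative matching error. The boundary data on $\partial B_r$ must differ from $a\Phi(z/a)$ by $o(a)$, not merely $O(r^2)$. The remedy is to exploit harmonicity of the difference: the effect of an $O(r^2)$ mismatch on $\partial B_r$ on the Hessian at $0$ must be shown to be $O(1)$ rather than $O(1/a)$. Concretely, the harmonic measure of $\partial B_r$ seen from $B_{2a}$ in the slit disc is $O(\sqrt{a/r})$, since a small window in a line has $\sqrt{\cdot}$ decay. So the mismatch contributes $O(r^2\sqrt{a/r})$ to $u_a$ on $B_{2a}$. After rescaling by $a^{-1}$ and applying interior estimates on $B_a$ at scale $a$, this becomes $o(1/a)$ in $\mu(a)$.

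\textbf{Conclusion.} Since $\beta>0$, \eqref{mu-positive-small-a} gives $\mu(a)\to+\infty$. By \eqref{Lambda-formula}, $\Lambda(a)\ge\mu(a)>0$ for small $a$.
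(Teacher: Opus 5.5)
\textbf{There is a genuine gap, in exactly the step you flag as the main obstacle.} You bound the harmonic measure of $\partial B_r$ on $B_{2a}$ by $O(\sqrt{a/r})$ and then claim an $O(r^2)$ mismatch on $\partial B_r$ costs only $o(1/a)$ in $\mu(a)$. That does not follow. The mismatch contributes $O(r^{3/2}a^{1/2})$ to $u_a$ on $B_{2a}$, hence $O(r^{3/2}a^{-1/2})$ to $v_a$. After the interior estimate at scale $a$ it contributes $O(r^{3/2}a^{-3/2})$ to $\mu(a)$. For fixed $r$ this is far larger than $\beta/a$.

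The same bound also spoils the term $u_a-u_0$ on $\partial B_r$. Knowing only $\sup_{\partial B_r}|u_a-u_0|=o(1)$ gives $o(\sqrt a)$ on $B_{2a}$, not $o(a)$. So neither your Step~3 target $|u_a-a\Phi(\cdot/a)|\le\eta(r)a+C(r)o(a)$ on $B_{2a}$ nor $v_a\to\Phi$ follows. Taking $r=r(a)\to0$ does not rescue it: that needs a quantitative rate for $u_a\to u_0$ on shrinking circles, which Lemma~\ref{continuity-a} does not give.

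The heuristic itself is misplaced. Square-root decay belongs to the tip of a single slit. Near the origin the boundary is a whole line with a small window, and positive harmonic functions vanishing on it grow linearly, like $\operatorname{Re}\sqrt{a^2-z^2}\sim|y|$. The correct order is $a/r$, and it must be proved. The barriers $\psi_a^\pm$ in Step~3 are never actually constructed.

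\textbf{How to repair it.} Absorb the source exactly with $-\tfrac12y^2$, which vanishes on the slit. Then $k_a:=u_a-\beta\operatorname{Re}\sqrt{a^2-z^2}+\tfrac12y^2$ is harmonic in $B_r\cap D_a$, continuous up to the boundary by Lemma~\ref{capacity}, and zero on $S_a\cap B_r$. On $\partial B_r$ every mismatch vanishes linearly in $|y|$:
\begin{itemize}
\item $|u_0-\beta|y|+\tfrac12y^2|\le C|y|\,|z|^2$, because $W=y\,(\beta-\tfrac12y+O(|z|^2))$;
\item $\bigl|\operatorname{Re}\sqrt{a^2-z^2}-|y|\bigr|\le Ca^2|y|/|z|^2$ for $|z|\ge2a$;
\item $|u_a-u_0|\le\epsilon_r(a)|y|$ with $\epsilon_r(a)\to0$.
\end{itemize}
For the last bound, uniform convergence is not enough near $(\pm r,0)$, and $u_a\le C|y|$ cannot hold globally since $u_a(0)>0$. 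Instead, reflect the harmonic function $u_a-u_0$ oddly across the slit near $(\pm r,0)$ and use interior gradient estimates.

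Since $\operatorname{Re}\sqrt{a^2-z^2}\ge|y|$ in the whole plane, the maximum principle gives $|k_a|\le\kappa\operatorname{Re}\sqrt{a^2-z^2}\le\sqrt5\,\kappa a$ on $B_{2a}$, with $\kappa=Cr^2+Ca^2r^{-2}+\epsilon_r(a)$. Hence $|a\mu(a)-\beta|\le a+C\kappa$. Let $a\to0$ and then $r\to0$.

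\textbf{Comparison with the paper.} The paper avoids the intermediate circle altogether. It subtracts $W$ on the whole half-disc, so the remainder has zero Dirichlet data on the entire semicircle and only Neumann data $\beta+g$ on the window. The linear bound is then built into $Q_a=P_a+\eta_a$, with $\|\eta_a\|_{L^\infty}\le Ca^2$, and into the comparison $|E_a|\le Ca^2Q_a$. This even yields the rate $a\mu(a)=\beta+O(a)$, which your local matching argument does not give without a rate for $u_a\to u_0$.
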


\begin{Lem}
Let $z = x + iy$ and define $P_a(x,y)$ in the upper half-plane by
\begin{equation}\label{defP_a}
    P_a(x,y) := \mathrm{Re}\,\sqrt{a^2 - z^2}-y,
\end{equation}
where the analytic branch of
$\displaystyle \sqrt{a^2-z^2}$ is chosen such that
$\sqrt{a^2-x^2}>0$
for all $x\in(-a,a)$.
Then $P_a$ satisfies 
\begin{equation*}
    \begin{cases}
\quad \; \; -\Delta P_a = 0 & \text{in } {B}^+,\\
\quad  P_a(x,0) = 0, & |x|>a,\\
\partial_\nu P_a(x,0) = 1, & |x|<a.
\end{cases}
\end{equation*}
Moreover, there exists $C>0$ such that, for any $a\in(0,\frac{1}{2})$, we have
\begin{equation}\label{P_a}
    |P_a| \le C a^2 \quad \text{on }  \partial {B}^+ \cap \{y>0\}.
\end{equation}
\end{Lem}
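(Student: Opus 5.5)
The function $P_a$ is the real part of an analytic function minus a linear function, so the plan is to read off all three properties from explicit formulas for $f(z):=\sqrt{a^2-z^2}$.

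\textbf{Harmonicity.} First I would fix the branch. Take $f$ analytic on $\mathbb C\setminus\bigl((-\infty,-a]\cup[a,\infty)\bigr)$ with $f>0$ on $(-a,a)$. Equivalently, $f(z)=-i\sqrt{z^2-a^2}$, where $\sqrt{z^2-a^2}\sim z$ at infinity. This function is analytic in the open upper half-plane and extends continuously to its closure. Hence $\operatorname{Re} f$ is harmonic in $B^+$, and so is $y$. This gives $\Delta P_a=0$.

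\textbf{Boundary values on $\{y=0\}$.} I would approach the real axis from above.
\begin{itemize}
\item For $|x|<a$, $f(x)=\sqrt{a^2-x^2}$ is real.
\item For $|x|>a$, the boundary value is $f(x+i0)=\mp i\sqrt{x^2-a^2}$, which is purely imaginary. Hence $\operatorname{Re} f=0$ there, and since $y=0$ we get $P_a(x,0)=0$.
\end{itemize}

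\textbf{Normal derivative for $|x|<a$.} Here $f$ is analytic across the segment, so Cauchy--Riemann gives $\partial_y\operatorname{Re} f=-\partial_x\operatorname{Im} f$. Since $\operatorname{Im} f\equiv0$ on the segment, this is $0$ there. Therefore $\partial_y P_a=-1$. With the outward normal $\nu=(0,-1)$ this yields $\partial_\nu P_a=1$.

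\textbf{Estimate on the arc.} On $|z|=1$, $y>0$, I would expand
\[
\sqrt{z^2-a^2}=z\sqrt{1-a^2/z^2}=z\bigl(1+h(a^2/z^2)\bigr),
\]
where $h(w)=\sqrt{1-w}-1$ satisfies $|h(w)|\le|w|$ for $|w|\le1/4$. This applies since $a<1/2$. Then
\[
f(z)=-iz-iz\,h(a^2/z^2),
\]
so $\operatorname{Re} f(z)=y+O(a^2)$ uniformly on the arc. This gives $|P_a|\le Ca^2$.

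\textbf{Main obstacle.} The only delicate point is consistency of the branch. I need the choice $f=-i\sqrt{z^2-a^2}$ with $\sqrt{z^2-a^2}\sim z$ to agree with the prescribed $f>0$ on $(-a,a)$, and to be continuous up to the segment from above. I would check this by evaluating at $z=it$: there $\sqrt{z^2-a^2}=i\sqrt{t^2+a^2}$, so $f(it)=\sqrt{t^2+a^2}>0$. Letting $t\downarrow0$ gives $a>0$, and by connectedness this matches the positive branch on $(-a,a)$.
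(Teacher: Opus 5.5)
Your proof is correct and follows the same route as the paper. You fix the branch, read off the boundary values of $\operatorname{Re}\sqrt{a^2-z^2}$ on the real axis, and expand $-iz\sqrt{1-a^2/z^2}$ on the arc to get $\operatorname{Re}\sqrt{a^2-z^2}=y+O(a^2)$. Your Cauchy--Riemann argument for $\partial_y\operatorname{Re} f=0$ on $(-a,a)$ and the explicit bound $|h(w)|\le|w|$ spell out steps the paper only asserts. One small correction: the identity $f=-i\sqrt{z^2-a^2}$ holds only in the upper half-plane (the two functions have different branch cuts, and the sign flips in the lower half-plane), which is all you use.
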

\begin{proof}
 Since the map $\displaystyle z \mapsto \sqrt{a^2-z^2}$
is analytic in the upper half-plane, its real part
is harmonic, and hence $\displaystyle -\Delta P_a=0$ in $ \displaystyle {B}^+$.
When \(y=0\) and \(|x|>a\),  we have $a^2-x^2<0$
and $\displaystyle \operatorname{Re}\sqrt{a^2-z^2}=0$,
thus $P_a(x,0)=0$ for all $|x|>a$.
For \(y=0\) and \(|x|<a\), we have $\displaystyle \operatorname{Re}\sqrt{a^2-z^2}
=
\sqrt{a^2-x^2}$,
hence  $\displaystyle \partial_\nu P_a(x,0)=1$.
It remains to prove the estimate on the semicircular boundary. 
For \(0<a< \frac12\) and \(|z|=1\), the choice of branch gives $\displaystyle \sqrt{a^2-z^2}
=
-iz\sqrt{1-\frac{a^2}{z^2}}$. Then a  Taylor expansion yields 
\begin{equation}\label{genhaotailezhankai}
\sqrt{a^2-z^2}
=
-iz+\frac{i a^2}{2z}+O(a^4), \quad \text{uniformly for } z\in\partial B^+\cap\{y>0\}.
\end{equation}
Taking real parts in \eqref{genhaotailezhankai}, we get
$\displaystyle \operatorname{Re}\sqrt{a^2-z^2}
=
y+O(a^2)$.
Thus
\[
|P_a|\leq Ca^2
\quad\text{on }\partial B^+\cap\{y>0\},
\]
where \(C>0\) is independent of \(a\in(0,\frac{1}{2})\).
\end{proof}

\begin{Lem}
There exists  a unique weak solution \(\eta_a\in H^1(B^+)\) of
\begin{equation}\label{eta_a}
    \begin{cases}
-\Delta \eta_a = 0 & \text{in } {B}^+,\\
\quad \; \; \eta_a = -P_a & \text{on } \Gamma_a,\\
\; \; \partial_\nu \eta_a = 0 & \text{on } I_a,
\end{cases}
\end{equation}
where $I_a := \bigl(-a,a\bigr)\times\{0\}$ and $\Gamma_a = \partial {B}^+ \setminus I_a$. Moreover,  there exists  \(C>0\) independent of \(a\) such that
\begin{equation}\label{etasmall}
    \|\eta_a\|_{L^\infty({B}^+)} \leq C a^2.
\end{equation}
\end{Lem}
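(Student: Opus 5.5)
Existence and uniqueness will come from the Lax--Milgram theorem in a suitable closed subspace of \(H^1(B^+)\), and the \(L^\infty\) bound from the weak maximum principle for mixed Dirichlet--Neumann problems, combined with \eqref{P_a}. The key observation is that \(-P_a\) restricted to \(\Gamma_a\) is small: on the flat part \(\Gamma_a\cap\{y=0\}=\{|x|\ge a\}\) we have \(P_a=0\) by the previous lemma, while on the semicircle \(|P_a|\le Ca^2\) by \eqref{P_a}.

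For existence, I first need an \(H^1(B^+)\) extension \(g\) of the Dirichlet datum \(-P_a|_{\Gamma_a}\). The natural choice is \(g=-P_a\) itself. This is legitimate provided \(P_a\in H^1(B^+)\). Near the endpoints \(\pm a\), \(\sqrt{a^2-z^2}\) behaves like \(c\sqrt{z\mp a}\), so its gradient is \(O(|z\mp a|^{-1/2})\), which is square integrable in two dimensions; elsewhere \(P_a\) is smooth up to \(\overline{B^+}\). Set \(V_a:=\{v\in H^1(B^+): v=0 \text{ on }\Gamma_a\text{ in the trace sense}\}\). I then look for \(\eta_a=g+w\) with \(w\in V_a\) and
\[
\int_{B^+}\nabla w\cdot\nabla\varphi=-\int_{B^+}\nabla g\cdot\nabla\varphi\quad\text{for all }\varphi\in V_a .
\]
Since \(\Gamma_a\) contains the semicircle, which has positive length, a Poincaré inequality holds on \(V_a\), so the form is coercive. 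Lax--Milgram then gives a unique \(w\), hence a unique weak solution \(\eta_a\); uniqueness of \(\eta_a\) is independent of the choice of extension. The Neumann condition on \(I_a\) is encoded by testing against \(\varphi\) that need not vanish on \(I_a\).

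For \eqref{etasmall}, set \(M:=\sup_{\Gamma_a}|P_a|\le Ca^2\). I will show \((\eta_a-M)^+\in V_a\). This holds because \(\eta_a-M\le0\) on \(\Gamma_a\) in the trace sense, and truncation preserves the trace inequality. Testing the weak formulation with \(\varphi=(\eta_a-M)^+\) gives
\[
\int_{B^+}|\nabla(\eta_a-M)^+|^2=0,
\]
so \((\eta_a-M)^+\) is constant. Being zero on \(\Gamma_a\), it vanishes identically, i.e.\ \(\eta_a\le M\). The same argument applied to \((-\eta_a-M)^+\) gives \(\eta_a\ge-M\). Hence \(\|\eta_a\|_{L^\infty(B^+)}\le Ca^2\), with \(C\) the constant from \eqref{P_a}, valid for \(a\in(0,\tfrac12)\). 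Larger \(a\) are irrelevant for the asymptotics as \(a\downarrow0\); if needed, the bound for \(a\in[\tfrac12,1)\) follows the same way from the crude estimate \(\sup_{\Gamma_a}|P_a|\le C\).

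The main obstacle is the justification that \(P_a\in H^1(B^+)\) and that \(P_a\) has the stated trace, given the square-root singularities at \((\pm a,0)\). I would handle this by the explicit local expansion above, checking that \(|\nabla P_a|^2\sim|z\mp a|^{-1}\) is integrable. An alternative that avoids the issue is to take as extension a smooth cutoff of \(-P_a\) supported near the semicircle. In that case \(P_a\) is smooth on the support of the cutoff, and \(P_a=0\) on \(\{|x|\ge a\}\) only needs to be imposed away from the semicircle, where the cutoff already vanishes. Either way, the maximum-principle step is routine once the variational formulation is set up.
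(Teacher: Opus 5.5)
Your proposal is correct and follows essentially the same route as the paper: existence and uniqueness come from the variational (Lax--Milgram) theory for the mixed problem, and the bound comes from testing with $(\eta_a-Ca^2)^+$ and $(-\eta_a-Ca^2)^+$, using $P_a=0$ on the flat part of $\Gamma_a$ and $|P_a|\le Ca^2$ on the semicircle. Beyond the paper's terse argument, you also supply the points it leaves implicit: $P_a\in H^1(B^+)$ serves as an admissible extension of the Dirichlet datum, truncation preserves the trace condition, and the range $a\ge\tfrac12$ is handled by a crude bound.
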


\begin{proof}
The existence and uniqueness follow from the standard variational theory for mixed Dirichlet--Neumann
problems (see
\cite{GT1983,
grisvard2011elliptic}).
By  \eqref{P_a}, 
\[
|\eta_a| = |-P_a| \leq C a^2 \quad \text{on } \partial {B}^+ \cap \{y>0\}.
\]
By the boundary condition in the mixed problem and the estimate for $P_a$, \[|\eta_a| \leq C a^2 \quad \text{on } \Gamma_a.\]
Define $$w := (\eta_a - C a^2)^+ ~~\text{ and }~~ V_a := \{v\in H^1({B}^+) : v=0 \text{ on } \Gamma_a\}.$$
Then we have $w \in V_a$.
By \eqref{eta_a}, for any \(\varphi \in V_a\), we have
\begin{equation}\label{fangchengruojie}
    0
=
\int_{{B}^+} (\Delta \eta_a)\varphi\,dx
=
-\int_{{B}^+} \nabla \eta_a \cdot \nabla \varphi\,dx
+
\int_{\partial {B}^+} \partial_\nu \eta_a\,\varphi\,ds=-\int_{{B}^+} \nabla \eta_a \cdot \nabla \varphi\,dx .
\end{equation}
Taking \(\varphi=w\) in \eqref{fangchengruojie} yields
$\displaystyle \int_{{B}^+} \nabla \eta_a \cdot \nabla w\,dx = 0$,
hence $\displaystyle \int_{{B}^+} |\nabla w|^2\,dx = 0$.
 Since $w=0$ on  $\Gamma_a$, 
\begin{equation}\label{yifang}
    \eta_a \leq C a^2
\quad \text{a.e. in } {B}^+.
\end{equation}
 Let $\displaystyle \overline w := (-Ca^2-\eta_a)^+$.
Similarly, we obtain
\begin{equation}\label{linyifang}
    \eta_a \geq -C a^2
\quad \text{a.e. in } {B}^+.
\end{equation}
Therefore, \eqref{yifang} and \eqref{linyifang} give \eqref{etasmall}.
\end{proof}

\begin{proof}[\bf{Proof of Proposition  \ref{small-a-positive}}]
The Hopf lemma gives $\beta > 0$.
 By symmetry, $v_a:=u_a|_{B^+}$ solves the mixed problem
\begin{equation*}
\begin{cases}
        -\Delta v_a=1 & \text{in} ~B^+,\\
        \quad \; \; v_a=0&  \text{on}~\partial B^+\setminus I_a,\\
       \; \; \partial_\nu v_a=0& \text{on}~ I_a,
\end{cases}
\end{equation*}
where  $I_a := \bigl(-a,a\bigr)\times\{0\}$ and $\nu=(0,-1)$ on the flat part of $\partial B^+$.
  Set \begin{equation}\label{hadef}
    h_a:=v_a-W,
\end{equation} 
where $W$ is the solution of \eqref{WWWW}.
Then $h_a$ satisfies
\begin{equation*}
\begin{cases}
        -\Delta h_a=0&  \text{in}~B^+,\\
        \quad \; \; h_a=0 & \text{on}~ \partial B^+\setminus I_a,\\
       \; \; \partial_\nu h_a=\partial_{y}W(x,0)  &\text{on } ~ I_a.
\end{cases}
\end{equation*}
Set $$g(x):=\partial_{y}W(x,0)-\beta.$$
Then  $\partial_{\nu}h_{a}
=
\beta+g(x) ~~\text{on } I_{a}$.  By the symmetry of $B^+$ with respect to the $y$-axis and the uniqueness
of the Dirichlet problem,
$W(-x,y)=W(x,y)$.
Hence $g$ is even. In particular,
$g(0)=g'(0)=0$. By the smooth boundary regularity of $W$ near the origin, there exist
$r_0>0$ and $C>0$ such that
\begin{equation}\label{g-quadratic}
|g(x)|\le Cx^2,
\quad
|g'(x)|\le C|x|
\quad\text{for all } |x|<r_0.
\end{equation} Consequently, for all sufficiently small $a>0$,
\begin{equation}\label{gxiaoyudengyuCa}
    |g(x)|\leq C a^2
\quad\text{for }x\in(-a,a).
\end{equation}
Let $Q_a=P_a+\eta_a$, where $P_a$ is defined by \eqref{defP_a} and $\eta_a$ is given by \eqref{eta_a}. Then \(Q_a\) is the
unique weak solution of
\begin{equation*}
\begin{cases}
        -\Delta Q_a=0 & \text{in}~B^+,\\
        \quad \; \;Q_a=0& \text{on}~\partial B^+\setminus I_a,\\
        \; \; \partial_\nu Q_a=1& \text{on}~I_a.
\end{cases}
\end{equation*}
 For any test function \(\varphi\) satisfying $\varphi\big|_{\Gamma_a}=0$, we have \[
\int_{{B}^+} \nabla Q_a\cdot \nabla \varphi \, dx
=
\int_{I_a} \varphi \, ds .
\]
Taking $\varphi = Q_a^- := \max\{-Q_a,0\}$, we get
\[
-\int_{{B}^+} |\nabla Q_a^-|^2 \, dx
=
\int_{I_a} Q_a^- \, ds
\ge 0 .
\]
Therefore, $Q_a\ge 0$ in ${B}^+$.
By the strong maximum principle, we obtain
\begin{equation}\label{Qadayu0}
    Q_a>0 \quad \text{in } {B}^+.
\end{equation}
Set \begin{equation}\label{Eadef}
    E_a:=h_a-\beta Q_a.
\end{equation} Then $E_a$ is the unique weak solution of \begin{equation*}
    \begin{cases}
-\Delta E_{a}=0 & \text{in }B^{+},\\
\quad \; \; E_{a}=0 & \text{on }\partial B^{+}\setminus I_{a},\\
\; \; \partial_{\nu}E_{a}=g(x) & \text{on }I_{a}.
\end{cases}
\end{equation*}
Choose $C>0$ so that \eqref{gxiaoyudengyuCa} holds. Testing the weak
equations with
\[(E_a-Ca^2Q_a)^+
\quad \text{and} \quad
(-E_a-Ca^2Q_a)^+,\]
 respectively, yields
\[E_a\le Ca^2Q_a
\quad \text{and} \quad
-E_a\le Ca^2Q_a.\]
Together with \eqref{Qadayu0}, this yields
\begin{equation}\label{Ea-small}
        |E_a|\le C a^2 Q_a\quad\text{in }B^+.
\end{equation}
We next rescale around the origin. Fix
$0<\rho<\rho'<1$.
Define
\[
\widehat\eta_a(X,Y)
:=
\frac{\eta_a(aX,aY)}{a}.
\]
Since $\partial_\nu\eta_a=0$ on $I_a$, the even reflection
$\widehat\eta_a^*(X,Y)
:=
\widehat\eta_a(X,|Y|)$
is weakly harmonic in $B_{\rho'}$ for all sufficiently small $a>0$.
By \eqref{etasmall},
\[
\|\widehat\eta_a^*\|_{L^\infty(B_{\rho'})}
\le Ca.
\]
The interior estimates for harmonic functions therefore imply \begin{equation}\label{Pazhengze}
    \|\widehat \eta_{a}\|_
{C^{2,\alpha}(\overline{B_{\rho}^{+}})}
\leq
Ca,
\end{equation}
where $C>0$ depends only on $\rho$, $\rho'$ and $\alpha$.
Set
 $$\Phi(X,Y)
:=
\operatorname{Re}\sqrt{1-(X+iY)^{2}}.$$
From  $Q_a=P_a+\eta_a$, the definition of $P_a$  and the estimate \eqref{Pazhengze}, we obtain \begin{equation}\label{Qalim}
    \frac{Q_{a}(aX,aY)}{a}
\to
\Phi(X,Y)-Y \quad  \text{in } C^{2,\alpha}\Bigl(\overline{B_{\rho}^{+}}\Bigl) \quad\text{as }a \to 0.
\end{equation}
Moreover, using the same identity and
$\|\eta_a\|_{L^\infty(B^+)}\le Ca^2$, we obtain
\begin{equation}\label{Qa-rescaled-bound}
\left\|
\frac{Q_a(a\,\cdot)}{a}
\right\|_{L^\infty(B_{\rho'}^+)}
\le C_{\rho'}.
\end{equation}
Note that \eqref{Qa-rescaled-bound} is obtained directly on
$B_{\rho'}^+$ and does not require enlarging the radius in
\eqref{Qalim}.
Next define
\[
\widehat E_a(X,Y)
:=
\frac{E_a(aX,aY)}{a},
\quad
G_a(X):=g(aX).
\]
Then
\begin{equation}\label{Eamixed}
    \begin{cases}
-\Delta\widehat E_a=0
& \text{in }B_{\rho'}^+,\\
\; \; \partial_\nu\widehat E_a=G_a
& \text{on }\{Y=0,\ |X|<\rho'\}.
\end{cases}
\end{equation}
Since $g(0)=g'(0)=0$ and $g$ is smooth near the origin,
\eqref{g-quadratic} gives
\begin{equation*}
    \| G_{a}\|_
{C^{1,\alpha}(-\rho',\rho')}
\leq Ca,
\end{equation*} where $C>0$ depends only on $\rho'$ and $\alpha$.
 Moreover,  \eqref{Ea-small} and \eqref{Qa-rescaled-bound} imply \begin{equation*}
    \|\widehat E_{a}\|_{L^{\infty}(B_{\rho'}^{+})}
\leq Ca.
\end{equation*} 
By the local Schauder estimate for the Neumann problem for \eqref{Eamixed}
(see \cite{GT1983}), \begin{equation}\label{EaC2a}
    \|\widehat E_{a}\|_
{C^{2,\alpha}(\overline{B_{\rho}^{+}})}
\leq
C
\big(
\|\widehat E_{a}\|_{L^{\infty}(B_{\rho'}^{+})}
+
\| G_{a}\|_
{C^{1,\alpha}(-\rho',\rho')}
\big)
\leq
Ca,
\end{equation}
where $C>0$ depends only on $\rho$, $\rho'$ and $\alpha$. Finally, define $$\displaystyle \widehat W_{a}(X,Y)
:=
\frac{W(aX,aY)}{a}.$$ Since $W(0,0)=\partial_{x}W(0,0)=0$, $\partial_{y}W(0,0)=\beta$, a Taylor expansion gives
\begin{equation}\label{Walim}
    \widehat W_{a}
\to
\beta Y
\quad\text{in }
C^{2}\bigl(\overline{B_{\rho}^{+}}\bigr).
\end{equation}
By \eqref{hadef} and \eqref{Eadef}, we have $$v_{a}=W+\beta Q_{a}+E_{a}.$$
Combining \eqref{Qalim}, \eqref{EaC2a} and \eqref{Walim},  we obtain
\begin{equation*}
    \frac{v_{a}(aX,aY)}{a}
\to
\beta\Phi(X,Y) \quad \text{in } C^{2}\bigl(\overline{B_{\rho}^{+}}\bigr).
\end{equation*} Both $u_a(aX,aY)/a$ and $\Phi(X,Y)$ admit even $C^2$ extensions
across $\{Y=0,\ |X|<\rho\}$. Therefore the preceding convergence
extends to a full neighborhood of the origin. Since
$\partial_{YY}\Phi(0,0)=1$
and
\[
\partial_{YY}
\left(\frac{u_a(aX,aY)}{a}\right)\bigg|_{(0,0)}
=
a\,\partial_{yy}u_a(0,0),
\]
we conclude that
$a\,\partial_{yy}u_a(0,0)\to\beta ~~ \text{as }a \to 0$,
hence \eqref{mu-positive-small-a} follows.
\end{proof}

Let $\displaystyle U:=\frac{1-x^2-y^2}{4}$. Then $U$ is the torsion function of $B$, i.e.,  \begin{equation} \label{UU}
 -\Delta U=1\; \text{ in }B,
        \quad U=0 \; \text{ on }\partial B.
\end{equation}
\begin{Prop}\label{large-a-negative}
As $a\uparrow1$, one has
\[
        u_a\to U
        \quad\text{in }C^2_{\rm loc}(B).
\]
Consequently, 
\[
        D^2u_a(0)\to -\frac12 I_2,
        \qquad \Lambda(a)\to -\frac12,
\]
and hence $\Lambda(a)<0$ for all $a$ sufficiently close to $1$.
\end{Prop}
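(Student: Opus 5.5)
The plan is to deduce Proposition~\ref{large-a-negative} directly from Lemma~\ref{continuity-a}, applied at the endpoint $a_0=1$, where by definition $D_1=B$. The torsion function of $D_1=B$ is the explicit function $U=(1-x^2-y^2)/4$ from \eqref{UU}: it is smooth, vanishes on $\partial B$ and satisfies $-\Delta U=1$, so by uniqueness of the Lax--Milgram solution in $H_0^1(B)$ we have $u_1=U$.

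First, Lemma~\ref{continuity-a} gives $u_a\to u_1=U$ strongly in $H_0^1(B)$ as $a\uparrow1$. Its Mosco input, Lemma~\ref{Mosco2}, covers $a_0=1$: (M1) is trivial and (M2) holds because any $v\in C_c^\infty(B)$ has support disjoint from $S_a$ once $a$ is close to $1$.

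Second, fix a compact $K\subset\subset B$ and choose an open $U'$ with $K\subset\subset U'\subset\subset B$. Then $\overline{U'}\subset B_r$ for some $r<1$. For $a>r$ the slit $S_a$ lies outside $\overline{B_r}$, so $\overline{U'}\subset D_a$, and $u_a-U$ is harmonic in $U'$. Interior estimates for harmonic functions give
\[
\|u_a-U\|_{C^{2,\alpha}(K)}\le C\|u_a-U\|_{L^2(U')}\to0 .
\]
This proves $u_a\to U$ in $C^2_{\rm loc}(B)$; it is exactly the local part of Lemma~\ref{continuity-a}.

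Third, take $K=\overline{B_{1/2}}$. Then $D^2u_a(0)\to D^2U(0)=-\tfrac12 I_2$, so $\mu(a)\to-\tfrac12$. By \eqref{Lambda-formula},
\[
\Lambda(a)=\max\{\mu(a),-1-\mu(a)\}\to\max\{-\tfrac12,-\tfrac12\}=-\tfrac12 ,
\]
and therefore $\Lambda(a)<0$ for $a$ sufficiently close to $1$.

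The only point needing care is the identification $u_1=U$ together with the validity of Lemma~\ref{continuity-a} at the endpoint $a_0=1$. Both are already contained in the earlier results, so no real obstacle is expected; the rest is routine bookkeeping.
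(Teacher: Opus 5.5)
Your proposal is correct and follows the paper's route exactly. The paper proves this proposition as the case $a_0=1$ of Lemma~\ref{continuity-a}, with $u_1=U$ the torsion function of $D_1=B$. Your check of the interior $C^{2,\alpha}$ estimates and of $\Lambda(a)\to\max\{-\tfrac12,-\tfrac12\}=-\tfrac12$ simply unpacks that lemma.
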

\begin{proof}
This is precisely the case $a_0=1$ of Lemma \ref{continuity-a}.
\end{proof}
Set
\begin{equation*}
        E:=\{a\in(0,1):\Lambda(a)\ge0\}
        , ~~~~ a^*:=\sup E
\end{equation*} and likewise
\begin{equation*}
        F:=\{a\in(0,1):\mu(a)\le0\},~~~~
         b^*:=\inf F.
\end{equation*}
 Proposition \ref{small-a-positive} and Proposition \ref{large-a-negative} show that $E,F\neq\varnothing$ and $a^*$, $b^*$ lie in $(0,1)$. Since $\Lambda(a)$ and $\mu(a)$ are continuous on $(0,1)$, we have
\begin{equation}\label{critical-lambda-zero}
        \Lambda(a^*)=0 \text{ and }
        \Lambda(a)<0 \quad\hbox{for every }a\in(a^*,1).
\end{equation}
Moreover,
\begin{equation}\label{critical-lambda-zeroF}
        \mu(a)>0  \quad\hbox{for every }a\in(0,b^*).
        \end{equation}
\begin{Prop}
\label{prop:saddle-persistence}
Fix $a\in(0,b^*)$. Then there exists $\delta(a)>0$ such that for all $\epsilon\in(0,\delta(a))$, the origin is a nondegenerate saddle point of $u_{a,\epsilon}$.
More precisely, letting $\mu_{a,\epsilon}:=\partial_{yy}u_{a,\epsilon}(0,0)$, we have $\mu_{a,\epsilon}>0$ and
\begin{equation}
\label{eq:Hessian-smooth-domain}
D^{2}u_{a,\epsilon}(0,0)=
\begin{pmatrix}
-1-\mu_{a,\epsilon} & 0\\
0 & \mu_{a,\epsilon}
\end{pmatrix}.
\end{equation}
\end{Prop}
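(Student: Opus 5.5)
The proof combines the symmetry of $u_{a,\epsilon}$ with the local $C^2$ convergence of Lemma~\ref{smooth-convergence}.

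\emph{Step 1: the Hessian at the origin has the stated diagonal form.} By \ref{C1}, $\Omega_{a,\epsilon}$ is symmetric with respect to both coordinate axes. Uniqueness of the Dirichlet problem then gives
\[
u_{a,\epsilon}(x,y)=u_{a,\epsilon}(-x,y)=u_{a,\epsilon}(x,-y).
\]
The origin lies in $\Omega_{a,\epsilon}$, since the domain is star-shaped with respect to it by \ref{C5}, and $u_{a,\epsilon}$ is smooth there. Evenness in each variable gives $\nabla u_{a,\epsilon}(0)=0$ and $\partial_{xy}u_{a,\epsilon}(0)=0$. From $-\Delta u_{a,\epsilon}=1$ we get $\partial_{xx}u_{a,\epsilon}(0)=-1-\mu_{a,\epsilon}$. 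This is exactly \eqref{eq:Hessian-smooth-domain}, and it holds for every $\epsilon$.

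\emph{Step 2: $\mu_{a,\epsilon}$ is positive for small $\epsilon$.} Fix $a\in(0,b^*)$ and a closed disk $K=\overline{B_r}$ with $r<a$. Then $K\subset\subset D_a$, because the slit $S_a$ stays at distance $a$ from the origin. Lemma~\ref{smooth-convergence} gives $u_{a,\epsilon}\to u_a$ in $C^2(K)$, so
\[
\mu_{a,\epsilon}=\partial_{yy}u_{a,\epsilon}(0)\to\partial_{yy}u_a(0)=\mu(a).
\]
By \eqref{critical-lambda-zeroF}, $\mu(a)>0$. Choose $\delta(a)\le\epsilon_0(a)$ such that $|\mu_{a,\epsilon}-\mu(a)|<\mu(a)/2$ for $0<\epsilon<\delta(a)$; then $\mu_{a,\epsilon}>\mu(a)/2>0$.

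\emph{Step 3: the origin is a nondegenerate saddle.} For such $\epsilon$ the Hessian eigenvalues are $\mu_{a,\epsilon}>0$ and $-1-\mu_{a,\epsilon}<-1<0$. Both are nonzero with opposite signs, so the critical point $0$ is a nondegenerate saddle.

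The one point needing care is that $\mu_{a,\epsilon}\to\mu(a)$ along the whole family $\epsilon\to0$, not just along a subsequence. This follows because Lemma~\ref{smooth-convergence} is stated for the full limit $\epsilon\to0$: its proof shows every subsequence has the same limit $u_a$. Apart from this, the argument is routine.
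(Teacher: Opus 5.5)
Your proof is correct and follows essentially the same route as the paper. The paper likewise uses the local $C^2$ convergence of Lemma~\ref{smooth-convergence} near the origin to get $\mu_{a,\epsilon}\ge\mu(a)/2>0$ for small $\epsilon$, and then uses the axis symmetry and $\Delta u_{a,\epsilon}=-1$ to obtain the diagonal Hessian with entries $-1-\mu_{a,\epsilon}<0$ and $\mu_{a,\epsilon}>0$.
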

\begin{proof}
For any fixed $a\in(0,b^*)$, we can choose $r_a>0$ such that $\displaystyle \overline{B_{2r_a}(0)}\subset \subset D_a$. Since $a\in(0,b^*)$,  \eqref{critical-lambda-zeroF} gives $\mu(a)>0$.
By Lemma \ref{smooth-convergence}, it follows that
\begin{equation*}
   \mu_{a,\epsilon}
:= \partial_{yy}u_{a,\epsilon}(0,0)
\to
\partial_{yy}u_a(0,0)
=
\mu(a)>0.
\end{equation*}
Hence, there exists $\delta(a)>0$ such that
\begin{equation}\label{mu_{a,epsilon}}
\mu_{a,\epsilon}
\geq
\frac{\mu(a)}{2}>0 \quad \text{for all } \epsilon \in (0,\delta(a)).
\end{equation}
The symmetry of $u_{a,\epsilon}$ with respect to the coordinate
axes implies
\begin{equation}\label{symmetry-u_e}
\partial_xu_{a,\epsilon}(0,0)
=
\partial_yu_{a,\epsilon}(0,0)
=
\partial_{xy}u_{a,\epsilon}(0,0)
=
0.
\end{equation}
Since $-\Delta u_{a,\epsilon}=1$, we have
\[
\partial_{xx}u_{a,\epsilon}(0,0)
=
-1-\mu_{a,\epsilon}<0,
\]
which proves \eqref{eq:Hessian-smooth-domain}. Therefore the Hessian  has one
strictly negative and one strictly positive eigenvalue, so the origin
is a nondegenerate saddle point.
\end{proof}

\begin{proof}[\bf{Proof of Theorem \ref{Thm1.2}}]
Fix $a\in(0,b^*)$,  let $\delta(a)$ be as in
Proposition~\ref{prop:saddle-persistence}, and take $\displaystyle \epsilon\in \big(0,\delta(a)\big)$. By \eqref{symmetry-u_e} and the Taylor expansion, 
\[
u_{a,\epsilon}(0,y)
-
u_{a,\epsilon}(0,0)
=
\frac{\mu_{a,\epsilon}}{2}y^{2}
+
o(y^{2}),
~~\text{as }y\to0.
\]
Since $a \in (0,b^*)$, \eqref{mu_{a,epsilon}} yields $\mu_{a,\epsilon}>0$. Therefore, there exists $r_{a,\epsilon}>0$ such that
\begin{equation}\label{eq:vertical-growth}
u_{a,\epsilon}(0,y)>u_{a,\epsilon}(0,0)
\quad
\text{for every }0<|y|<r_{a,\epsilon}.
\end{equation}
In particular,
$\max\limits_{\overline{\Omega_{a,\epsilon}}}u_{a,\epsilon}
>
u_{a,\epsilon}(0,0).$
Since $u_{a,\epsilon}\in C(\overline{\Omega_{a,\epsilon}})$,
$u_{a,\epsilon}$  attains its global maximum. Moreover,
$u_{a,\epsilon}>0$ in $\Omega_{a,\epsilon}$ and
$u_{a,\epsilon}=0$ on $\partial\Omega_{a,\epsilon}$, so every
global maximizer lies in $\Omega_{a,\epsilon}$.
By Lemma~\ref{lem:horizontal-monotonicity}, every global maximizer
lies on the $y$-axis. Hence there exists $y_\epsilon$ such that
\[
u_{a,\epsilon}(0,y_\epsilon)
=
\max_{\overline{\Omega_{a,\epsilon}}}u_{a,\epsilon}.
\]
By \eqref{eq:vertical-growth}, we have $y_\epsilon\neq0$.
Since $\Omega_{a,\epsilon}$ is symmetric with respect to the
$x$-axis, uniqueness of the Dirichlet problem implies
$u_{a,\epsilon}(x,-y)=u_{a,\epsilon}(x,y)$.
Consequently,
\[
u_{a,\epsilon}(0,-y_\epsilon)
=
u_{a,\epsilon}(0,y_\epsilon)
=
\max_{\overline{\Omega_{a,\epsilon}}}u_{a,\epsilon}.
\]
Since $y_\epsilon\neq0$, the points
$(0,y_\epsilon)$ and $(0,-y_\epsilon)$ are distinct global
maximizers of $u_{a,\epsilon}$.
\end{proof}

\begin{proof}[\bf{Proof of Corollary \ref{cor1.3}}]
Fix $a\in(0,b^*)$. Let $\delta(a)>0$ be given by
Proposition~\ref{prop:saddle-persistence}. For every
$0<\epsilon<\delta(a)$, \eqref{eq:vertical-growth}
implies that there exists $t_\epsilon>0$ sufficiently small such that
    $u_{a,\epsilon}(0,\pm t_\epsilon)
    >
    u_{a,\epsilon}(0,0)$.
Thus
\[
\mathcal{L}_{a,\epsilon}^{+}
:=
\mathcal{L}_{a,\epsilon}\cap\{y>0\}
\text{ and }
\mathcal{L}_{a,\epsilon}^{-}
:=
\mathcal{L}_{a,\epsilon}\cap\{y<0\}
\]
are both nonempty.
By Lemma~\ref{lem:horizontal-monotonicity} and the definition of
$\mathcal L_{a,\epsilon}$ in \eqref{1.7},
\[
    \mathcal L_{a,\epsilon}\cap\{y=0\}
    =
    \varnothing.
\]
Consequently,
\[
    \mathcal L_{a,\epsilon}
    =
    \mathcal L_{a,\epsilon}^{+}
    \cup
    \mathcal L_{a,\epsilon}^{-},
\]
where $\mathcal L_{a,\epsilon}^{+}$ and
$\mathcal L_{a,\epsilon}^{-}$ are nonempty, disjoint,
and relatively open in $\mathcal L_{a,\epsilon}$. Hence $\mathcal L_{a,\epsilon}$ is
disconnected. In particular, $\mathcal L_{a,\epsilon}$ cannot be star-shaped.
\end{proof}

\section{Global maximizer and proof of Theorem~\ref{Thm1.1}} \label{global maximum}

In this section, we prove Lemma~\ref{green-monotonicity}, which establishes the strict monotonicity of $u_a$ along the $y$-axis if $\mu(a)<0$. Using this monotonicity, Proposition~\ref{origin-global-max-slit} shows that the origin is the unique global maximizer of $u_a$ in $D_a$ for $a\in(a^*,1)$. Finally, we complete the proof of Theorem~\ref{Thm1.1}.

\begin{Lem}\label{green-monotonicity}
Let $a\in(0,1)$.  $\mu(a)=\partial_{yy}u_a(0)$ is defined by \eqref{mu(a)}.  If $\mu(a)<0$, then
\begin{equation*}
        \partial_yu_a(0,y)<0 \quad\text{for }0<y<1.
\end{equation*}
Hence $\displaystyle y\mapsto u_a(0,y)$ is strictly decreasing on $[0,1)$, and strictly increasing on $(-1,0]$ by symmetry.
\end{Lem}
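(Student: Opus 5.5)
The plan is to represent $u_a$ on the whole disk $B$ as the torsion function $U$ of $B$ (see \eqref{UU}) minus a Green potential of a nonnegative Radon measure carried by the slit $S_a$. Then $y\mapsto\partial_yu_a(0,y)/y$ will be shown to be strictly decreasing on $(0,1)$, and its limit as $y\downarrow0$ equals $\mu(a)$.

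\textbf{Step 1 (measure representation).} Extend $u_a$ by zero outside $D_a$, viewed as an element of $H_0^1(B)$. For $\varphi\in C_c^\infty(B)$ set
\[
T(\varphi):=\int_B\varphi\,dz-\int_B\nabla u_a\cdot\nabla\varphi\,dz .
\]
Then $T$ vanishes on $C_c^\infty(D_a)$, so $T$ is supported on $S_a$. By Lemma \ref{capacity}, $u_a\ge0$ is continuous and vanishes on $S_a$. Hence $u_a$ is superharmonic-type across the slit: $-\Delta u_a-1\le0$ in $\mathcal D'(B)$. One can check this by testing with $\varphi\ge0$ against $(u_a-\delta)^+$-type truncations, or by approximating $S_a$ with thin neighborhoods. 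Therefore $T=-\sigma$ for a nonnegative Radon measure $\sigma$ supported on $\overline{S_a}$, that is, $-\Delta u_a=1-\sigma$ in $\mathcal D'(B)$. By the $x$-axis symmetry, formally $\sigma=2\partial_yu_a(x,0^+)\,dx$ on $S_a$. Since $u_a=0$ on $\partial B$, we get
\[
u_a(z)=U(z)-\int_{\overline{S_a}}G_B(z,\xi)\,d\sigma(\xi),
\]
where $G_B$ is the Dirichlet Green function of $B$. This identity holds in $H_0^1(B)$, and pointwise in $D_a$. Because $(0,y)$ with $y>0$ stays at positive distance from $S_a$, we may differentiate under the integral there.

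\textbf{Step 2 (explicit kernel).} For $z=(0,y)$ and $\xi=(s,0)$ with $a\le|s|\le1$,
\[
G_B(z,\xi)=\tfrac1{4\pi}\bigl[\log(1+y^2s^2)-\log(y^2+s^2)\bigr].
\]
Together with $\partial_yU(0,y)=-y/2$, this gives
\[
\frac{\partial_yu_a(0,y)}{y}=-\frac12+\frac1{2\pi}\int\Bigl[\frac1{y^2+s^2}-\frac{s^2}{1+y^2s^2}\Bigr]d\sigma(s)=:-\frac12+\int k(y^2,s)\,d\sigma(s).
\]
For $t=y^2\in(0,1)$ and $s^2\le1$,
\[
\partial_tk=\tfrac1{2\pi}\Bigl[-\frac1{(t+s^2)^2}+\frac{s^4}{(1+ts^2)^2}\Bigr]<0 ,
\]
because $s^2(t+s^2)<1+ts^2$. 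The only equality case is $s^2=1$, where the kernel vanishes identically; this reflects the fact that $G_B$ vanishes on $\partial B$. So whenever $\sigma\not\equiv0$, the function $y\mapsto\partial_yu_a(0,y)/y$ is strictly decreasing on $(0,1)$. If $\sigma\equiv0$, then $u_a=U$ and $\mu(a)=-\tfrac12$, so the conclusion holds trivially.

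\textbf{Step 3 (conclusion).} By evenness in $y$, $\partial_yu_a(0,0)=0$, so $\lim_{y\downarrow0}\partial_yu_a(0,y)/y=\partial_{yy}u_a(0)=\mu(a)<0$. Strict monotonicity then gives $\partial_yu_a(0,y)/y<\mu(a)<0$ for all $0<y<1$. The monotonicity statements for $u_a(0,\cdot)$ follow, and the case $y<0$ follows from symmetry.

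I expect the main obstacle to be Step 1: rigorously producing the nonnegative measure $\sigma$ on the slit and the Green representation. The difficulties are the singular behavior of $\partial_yu_a$ near the tips $(\pm a,0)$ and near $(\pm1,0)$, and checking that $\sigma$ is a finite Radon measure with no mass escaping to $\partial B$. My first attempt would be the distributional sign argument: compare $u_a$ with its harmonic corrections on $\{u_a>\delta\}$ and let $\delta\to0$. If that is delicate, I would instead use the smooth domains $\Omega_{a,\epsilon}$, where $\sigma_\epsilon=|\nabla u_{a,\epsilon}|\,ds$ on the boundary is classical. I would then pass to the limit using Lemma \ref{smooth-convergence}, obtaining uniform mass bounds from $\sigma_\epsilon(\partial\Omega_{a,\epsilon})=|\Omega_{a,\epsilon}|$.
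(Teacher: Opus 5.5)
Your proposal is correct and follows the paper's proof. The paper also writes $U-u_a$ as the Green potential of a nonnegative measure on $\overline{S_a}$, obtained exactly as in your fallback: a weak limit of $\chi_{T_{a,\epsilon}}\,dx-\partial_\nu u_{a,\epsilon}\,\mathcal H^1$, with total mass at most $|B|$. It then computes the same kernel $\frac{1-t^4}{2\pi(1+t^2y^2)(t^2+y^2)}$ on the $y$-axis and uses $K(t,y)\le K(t,0)$ to obtain $\partial_yu_a(0,y)/y\le\mu(a)$. Your primary Step 1 route also works and avoids the approximating domains: test with $\varphi\min\{u_a/\delta,1\}\in H_0^1(D_a)$ for $\varphi\ge0$ to get $-\Delta u_a\le1$ in $\mathcal D'(B)$. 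Note that this gives $T=\sigma\ge0$, not $T=-\sigma$; $T=\sigma$ is what your own formula $-\Delta u_a=1-\sigma$ requires.
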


\begin{proof}
Let $H_a:=U-u_a$, where $U$ is defined in \eqref{UU}.
Then $H_a \in H_0^1(B)$ and is harmonic in $D_a$. The maximum principle gives $ H_a \ge 0$  in $D_a$. Define
$$H_{a,\epsilon}:=U-u_{a,\epsilon}\text{ and }
\Sigma_{a,\epsilon}
:=
\partial T_{a,\epsilon}\cap B,$$
 where \(u_{a,\epsilon}\) is the
torsion function of \(\Omega_{a,\epsilon}\), extended by zero to
\(B\). 
For every \(\varphi\in C_c^\infty(B)\), integration by parts gives
\begin{equation*}
\left\langle-\Delta H_{a,\epsilon},\varphi\right\rangle
=
\int_{T_{a,\epsilon}}\varphi\,dx
-
\int_{\Sigma_{a,\epsilon}}
\partial_\nu u_{a,\epsilon}\,
\varphi\,d\mathcal H^1,
\end{equation*}
where \(\nu\) is the outward unit normal of
\(\Omega_{a,\epsilon}\), and
\(\mathcal H^1\) is the one-dimensional Hausdorff measure.
By \ref{C1}, $\partial\Omega_{a,\epsilon}$ is smooth. Since
\(u_{a,\epsilon}>0\) in \(\Omega_{a,\epsilon}\) and
\(u_{a,\epsilon}=0\) on \(\Sigma_{a,\epsilon}\), the Hopf
boundary lemma yields $\partial_\nu u_{a,\epsilon}<0$ on $\Sigma_{a,\epsilon}$. 
Consequently, 
\begin{equation*}
\sigma_{a,\epsilon}
:=
\chi_{T_{a,\epsilon}}\,dx
-
\bigl(\partial_\nu u_{a,\epsilon}\bigr)
\mathcal H^1\lfloor_{\Sigma_{a,\epsilon}}
\end{equation*}
is a nonnegative Radon measure, and
\begin{equation}\label{support-correct}
\operatorname{supp}\sigma_{a,\epsilon}
\subset\overline{T_{a,\epsilon}}.
\end{equation}
The divergence theorem gives
\[
        \int_{\partial\Omega_{a,\epsilon}}\partial_\nu u_{a,\epsilon}\dd\mathcal{H}^1
        =\int_{\Omega_{a,\epsilon}}\Delta u_{a,\epsilon}\dd x
        =-|\Omega_{a,\epsilon}|.
\]
Together with $\partial_\nu u_{a,\epsilon}\le0$ on the outer boundary, this gives
\[
        \sigma_{a,\epsilon}(B)
        =|T_{a,\epsilon}|-\int_{\Sigma_{a,\epsilon}}\partial_\nu u_{a,\epsilon}\dd \mathcal{H}^1
        \le |T_{a,\epsilon}|+|\Omega_{a,\epsilon}|=|B|.
\]
Thus the family $\{\sigma_{a,\epsilon}\}$ is uniformly bounded. By the compactness theorem for Radon measures
(see \cite{evans2015measure}), every sequence
$\epsilon_j\to0$ admits a subsequence, still denoted by
$\epsilon_j$, and there exists a nonnegative Radon measure $\sigma_a$ such that
\[
\sigma_{a,\epsilon_j}\rightharpoonup\sigma_a
\quad\text{as }j\to\infty.
\]
By \eqref{support-correct}  and  $d_{{H}}\!\left(\overline{{T_{a,\epsilon}}},\overline{S_a} \right) \to 0$  in \ref{C3}, we obtain
\begin{equation}\label{sigma-support}
        \supp\sigma_a\subset \overline{S_a}.
\end{equation}
Moreover, by Lemma \ref{smooth-convergence}, we have $H_{a,\epsilon}\to H_a$ strongly in $H^1_0(B)$.
Hence, for every $\varphi\in C_c^\infty(B)$,
\[
\int_B\varphi\,d\sigma_{a,\epsilon_j}
=
\left\langle-\Delta H_{a,\epsilon_j},\varphi\right\rangle
=
\int_B\nabla H_{a,\epsilon_j}\cdot\nabla\varphi\,dx
\to
\int_B\nabla H_a\cdot\nabla\varphi\,dx.
\]
Therefore,
\begin{equation}\label{sigma-representation}
-\Delta H_a=\sigma_a
\quad\text{in }\mathcal D'(B),
\end{equation} where $\mathcal D'(B)$ is the dual space of $C_c^\infty(B)$.
Since $-\Delta H_a=\sigma_a$ in $\mathcal D'(B)$, the measure
$\sigma_a$ is uniquely determined by $H_a$. Together with the uniform boundedness of
$\{\sigma_{a,\epsilon}\}$, this yields
\[
\sigma_{a,\epsilon}{\rightharpoonup}\sigma_a
\quad\text{as }\epsilon\to0.
\]
Let $G_B$ be the Green function of the unit disk,
\begin{equation*}
        G_B(x,\xi)=\frac1{2\pi}\log\frac{|\xi|\,|x-\xi^*|}{|x-\xi|},
        \qquad \xi^*:=\frac{\xi}{|\xi|^2}.
\end{equation*}
Combining $H_a\in H^1_0(B)$,  \eqref{sigma-support}, and \eqref{sigma-representation}, the Green representation
formula (see \cite{Ponce}) yields 
\begin{equation}\label{green-rep-ha}
        H_a(x)=\int_{\overline{S_a}}G_B(x,\xi)\dd\sigma_a(\xi) \quad \text{for } x\in D_a.
\end{equation}
The identity holds pointwise in $D_a$, since both sides are harmonic
and continuous away from $S_a$.
For $x=(0,y)$ and $\xi=(t,0)$ with $a\le |t|<1$, a direct computation gives
\begin{equation*}
        G_B\big((0,y),(t,0)\big)
        =\frac1{4\pi}\log\frac{1+t^2y^2}{t^2+y^2},
\end{equation*}
and hence
\begin{equation*}
        \partial_yG_B\big((0,y),(t,0)\big)=-yK(t,y),
\end{equation*}
where
\begin{equation*}
        K(t,y):=\frac{1-t^4}{2\pi(1+t^2y^2)(t^2+y^2)}.
\end{equation*}
 By $(1+t^2y^2)(t^2+y^2)\ge t^2$, we have
\begin{equation}\label{K-monotone}
        K(t,y)\le K(t,0)=\frac{1-t^4}{2\pi t^2} \quad \text{for all }y\in(0,1).
\end{equation}
Since \(\displaystyle \operatorname{dist}\big((0,y),S_a\big)>0\) for every \(y\in(0,1)\), differentiation of \eqref{green-rep-ha} with respect to \(y\) is justified. Therefore,
\[
        \partial_yH_a(0,y)=-y\int_{S_a}K(t,y)\dd\sigma_a(t).
\]
Combining $u_a=U-H_a$ and $\partial_yU(0,y)=-y/2$, we obtain
\begin{equation}\label{axis-derivative-formula}
        \frac1y\partial_yu_a(0,y)
        =-\frac12+\int_{S_a}K(t,y)\dd\sigma_a(t).
\end{equation}
Because $u_a$ is smooth near the origin and even in $y$,
\[
    \lim_{y\downarrow0}\frac{\partial_yu_a(0,y)}{y}
    =\partial_{yy}u_a(0)=\mu(a).
\]
Moreover, for each fixed $a>0$,
\[
    0\le K(t,y)\le K(t,0)\le \frac{1}{2\pi a^2},
    \quad t\in S_a \text{ and } y\in(0,1).
\]
Since $\sigma_a$ is finite, the dominated convergence theorem applied
to \eqref{axis-derivative-formula} gives
\[
    \mu(a)
    =-\frac12+\int_{S_a}K(t,0)\,\dd\sigma_a(t).
\]
Consequently, \eqref{axis-derivative-formula} and \eqref{K-monotone}
imply
\begin{equation}\label{axis-derivative-bound}
    \frac1y\partial_yu_a(0,y)\le \mu(a)
    \quad \text{for all } y\in(0,1).
\end{equation} Hence, if $\mu(a)<0$, \eqref{axis-derivative-bound} gives
\[
        \partial_yu_a(0,y)\le y\mu(a)<0
        \quad \text{for all }y\in(0,1).
\]
This proves the strict monotonicity on the positive  $y$-axis, and the negative side follows from the even symmetry in $y$.
\end{proof}

\begin{Prop}\label{origin-global-max-slit}
If $a\in(a^*,1)$, the origin is the unique global maximizer of $u_a$ in $D_a$.
\end{Prop}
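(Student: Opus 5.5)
The plan is to combine the two monotonicity results already established: horizontal monotonicity (Lemma~\ref{lem:horizontal-monotonicity}) and monotonicity along the $y$-axis (Lemma~\ref{green-monotonicity}). The only work is to check that the hypothesis $\mu(a)<0$ of Lemma~\ref{green-monotonicity} holds for every $a\in(a^*,1)$, and that a global maximizer of $u_a$ exists and lies in the interior of $D_a$.

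First, fix $a\in(a^*,1)$. By \eqref{critical-lambda-zero} we have $\Lambda(a)<0$. By \eqref{Lambda-formula}, $\Lambda(a)=\max\{\mu(a),-1-\mu(a)\}$, so $\mu(a)\le\Lambda(a)<0$. Lemma~\ref{green-monotonicity} then shows that $y\mapsto u_a(0,y)$ is strictly decreasing on $[0,1)$ and strictly increasing on $(-1,0]$. Hence
\[
u_a(0,y)<u_a(0,0)\quad\text{for all }0<|y|<1.
\]

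Second, we need a maximizer. By Lemma~\ref{capacity}, $u_a\in C(\overline{D_a})$ with $u_a=0$ on $\partial D_a$, and $u_a>0$ in $D_a$. Since $\overline{D_a}$ is compact, $\sup u_a$ is attained at some point of $\overline{D_a}$. Because the supremum is positive while $u_a$ vanishes on $\partial D_a$, every global maximizer lies in $D_a$.

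Third, note that $u_a\in C^2(D_a)$ because it is harmonic plus a quadratic. So Lemma~\ref{lem:horizontal-monotonicity} applies with $G=D_a$, using that each horizontal section of $D_a$ is the interval $(-\sqrt{1-y^2},\sqrt{1-y^2})$ for $y\neq0$ and $(-a,a)$ for $y=0$. It gives $u_a(x,y)<u_a(0,y)$ whenever $x\neq0$. Combining with the first step, for any $(x,y)\in D_a\setminus\{0\}$:
\begin{itemize}
\item if $x\neq0$, then $u_a(x,y)<u_a(0,y)\le u_a(0,0)$;
\item if $x=0$ and $y\neq0$, then $u_a(0,y)<u_a(0,0)$.
\end{itemize}
So the origin is the unique global maximizer.

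The main subtlety I expect is the applicability of Lemma~\ref{lem:horizontal-monotonicity} on the slit domain. It requires $u\in C(\overline G)$, which Lemma~\ref{capacity} supplies. The maximum-principle steps on $\Sigma_\lambda$ for the nonsmooth $D_a$ should go through because $w_\lambda$ is continuous up to the boundary of $\Sigma_\lambda$. Since the lemma is stated for $G=D_a$, I take it as given. Everything else is bookkeeping.
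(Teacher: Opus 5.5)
Your proposal is correct and matches the paper's argument: $\Lambda(a)<0$ forces $\mu(a)<0$, Lemma~\ref{green-monotonicity} then gives strict decrease of $u_a$ along the $y$-axis away from the origin, and Lemma~\ref{lem:horizontal-monotonicity} reduces everything to the $y$-axis. Your separate existence step is harmless but not needed, since the chain $u_a(x,y)<u_a(0,y)\le u_a(0,0)$ for $(x,y)\neq 0$ already shows that the origin is the unique global maximizer.
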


\begin{proof}
By \eqref{critical-lambda-zero},  $\Lambda(a)<0$ for all $a\in(a^*,1)$. Hence $D^2u_a(0)$ is negative definite, and  in particular    $\mu(a)<0$.  Hence Lemma \ref{green-monotonicity} implies $u_a(0,y)<u_a(0,0)$ for $0<|y|<1$.  By Lemma \ref{lem:horizontal-monotonicity},  every global maximizer of $u_a$ must lie on the  $y$-axis.  Therefore the origin is the unique global maximizer of $u_a$ in $D_a$.
\end{proof}

Let $u_{a,\eps}$ be the solution of \eqref{fc1.1}. We next transfer the uniqueness of the global maximizer from the slit domain $D_a$ to the smooth approximating 
domains $\Omega_{a,\epsilon}$.

\begin{Prop}\label{fixed-a-stability}
Let $a\in(a^*,1)$. Then there exists $\epsilon(a)>0$ such that,
for every $0<\epsilon<\epsilon(a)$, the origin is the strict global
maximizer of $u_{a,\epsilon}$ in $\Omega_{a,\epsilon}$. Moreover,
\begin{equation}\label{D^2}
D^2u_{a,\epsilon}(0)
\longrightarrow
D^2u_a(0)
\quad\text{as }\epsilon\to 0.
\end{equation}
\end{Prop}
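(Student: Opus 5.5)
The convergence \eqref{D^2} is immediate from Lemma~\ref{smooth-convergence}. Fix $r_a>0$ with $\overline{B_{2r_a}}\subset\subset D_a$ and take $K=\overline{B_{r_a}}$. Then $u_{a,\epsilon}\to u_a$ in $C^2(K)$, and in particular $D^2u_{a,\epsilon}(0)\to D^2u_a(0)$.

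For the maximizer, I reduce to the $y$-axis. By Lemma~\ref{lem:horizontal-monotonicity} applied with $G=\Omega_{a,\epsilon}$, every global maximizer of $u_{a,\epsilon}$ lies on the $y$-axis, and $u_{a,\epsilon}(0,y)>u_{a,\epsilon}(x,y)$ whenever $x\neq0$. So it suffices to show $u_{a,\epsilon}(0,y)<u_{a,\epsilon}(0,0)$ for every $y\neq0$ with $(0,y)\in\Omega_{a,\epsilon}$. By evenness in $y$, I only need $y>0$. I split the axis into a near region $0<y<r$ and a far region $y\ge r$, where $r\in(0,r_a]$ is chosen below.

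\emph{Near region.} Since $a\in(a^*,1)$, \eqref{critical-lambda-zero} gives $\Lambda(a)<0$, hence $\mu(a)<0$. Because $\partial_{yy}u_a$ is continuous at $0$, I choose $r\le r_a$ with $\partial_{yy}u_a\le \mu(a)/2$ on $\overline{B_r}$. Uniform $C^2$ convergence on $\overline{B_r}$ then gives $\partial_{yy}u_{a,\epsilon}\le\mu(a)/4<0$ on $\overline{B_r}$ for all small $\epsilon$. By symmetry $\partial_yu_{a,\epsilon}(0,0)=0$, as in \eqref{symmetry-u_e}. Integrating, $\partial_yu_{a,\epsilon}(0,y)<0$ for $0<y\le r$. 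Thus $u_{a,\epsilon}(0,y)<u_{a,\epsilon}(0,0)$ on $(0,r]$, with a uniform gap $u_{a,\epsilon}(0,0)-u_{a,\epsilon}(0,r)\ge |\mu(a)|r^2/8$.

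\emph{Far region.} This is the step that needs care, since $C^2_{\rm loc}$ convergence does not reach the boundary. The plan is to use the one-sided comparison of Lemma~\ref{Lem2.2}, $u_{a,\epsilon}\le u_a$, which avoids any uniform convergence near $\partial\Omega_{a,\epsilon}$. By Lemma~\ref{green-monotonicity}, $y\mapsto u_a(0,y)$ is strictly decreasing on $[0,1)$. By Lemma~\ref{capacity}, $u_a$ is continuous up to $(0,1)$ with value $0$. Hence
\[
\sup_{r\le y<1}u_a(0,y)=u_a(0,r)<u_a(0,0).
\]
Set $2\delta:=u_a(0,0)-u_a(0,r)>0$. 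Since $u_{a,\epsilon}(0,0)\to u_a(0,0)$, for small $\epsilon$ and every $y\in[r,1)$ with $(0,y)\in\Omega_{a,\epsilon}$,
\[
u_{a,\epsilon}(0,y)\le u_a(0,y)\le u_a(0,0)-2\delta<u_{a,\epsilon}(0,0)-\delta .
\]

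Combining the two regions, for $\epsilon<\epsilon(a)$ we get $u_{a,\epsilon}(0,y)<u_{a,\epsilon}(0,0)$ for all $y\neq0$. With Lemma~\ref{lem:horizontal-monotonicity}, this shows the origin is the unique, hence strict, global maximizer of $u_{a,\epsilon}$.
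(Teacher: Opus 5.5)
Your proof is correct. It keeps the paper's near/far structure: local $C^2$ convergence near the origin, and away from it the comparison $u_{a,\epsilon}\le u_a$ of Lemma~\ref{Lem2.2} together with a gap for $u_a$. The difference is where you apply the reflection lemma, and this changes which ingredients are needed.

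The paper first proves that the origin is the unique global maximizer of $u_a$ on $D_a$ (Proposition~\ref{origin-global-max-slit}, which uses Lemma~\ref{lem:horizontal-monotonicity} with $G=D_a$). It then argues in two dimensions. Negative definiteness of $D^2u_a(0)$ makes $u_{a,\epsilon}$ strictly concave on a ball $B_r$. Outside $B_r$ it uses the gap $\gamma_a=u_a(0)-\max_{\overline{D_a}\setminus B_r}u_a>0$, whose positivity relies on $u_a\in C(\overline{D_a})$ vanishing on $\partial D_a$ (Lemma~\ref{capacity}) and on compactness.

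You instead apply Lemma~\ref{lem:horizontal-monotonicity} directly with $G=\Omega_{a,\epsilon}$, which reduces everything to the $y$-axis. Near the origin you then need only $\partial_{yy}u_a(0)=\mu(a)<0$, not the full Hessian. Far from the origin the gap is the one-dimensional quantity $u_a(0,0)-u_a(0,r)$, which follows from the strict monotonicity in Lemma~\ref{green-monotonicity} alone. Your appeal to Lemma~\ref{capacity} at that point is superfluous, since a decreasing function on $[r,1)$ attains its supremum at $r$.

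So your route bypasses Proposition~\ref{origin-global-max-slit} and the boundary regularity of $u_a$ on the slit domain for this step, at the cost of using the horizontal convexity of $\Omega_{a,\epsilon}$. The paper's route is a more general transfer principle. It needs only $u_{a,\epsilon}\le u_a$, local $C^2$ convergence and $\nabla u_{a,\epsilon}(0)=0$ to show that a strict global maximizer of $u_a$ with negative definite Hessian persists for $u_{a,\epsilon}$.
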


\begin{proof}
By Proposition \ref{origin-global-max-slit}, the origin  is the unique global maximizer of $u_a$ and  $D^2u_a(0)$ is negative definite for any fixed $a \in (a^*,1)$. Thus there exist constants $r,c>0$ such that 
$$B_r(0)\subset \subset D_a, \text{ and } D^2u_a(z)\le -cI_2 \quad \text{for all } z\in B_r(0).$$
By Lemma~\ref{smooth-convergence}, there exists
$\epsilon_1(a)>0$ such that, for every
$0<\epsilon<\epsilon_1(a)$,
$$B_r(0)\subset \subset \Omega_{a,\epsilon},  \text{ and } \displaystyle D^2u_{a,\epsilon}(z)\le -\frac c2 I_2 \quad \text{for all } z\in B_r(0).$$
Thus $u_{a,\epsilon}$ is strictly concave in $B_r(0)$.
Since $\Omega_{a,\epsilon}$ is symmetric with respect to both
coordinate axes, we have
$\nabla u_{a,\epsilon}(0)=0$,
and hence the origin is the unique maximizer of $u_{a,\epsilon}$
in $B_r(0)$.
\vskip0.1cm
It remains to exclude maximizers outside $B_r(0)$. Since
$u_a\in C(\overline{D_a})$ and the origin is its strict global
maximizer, we have
\[
\gamma_a
:=
u_a(0)
-
\max_{\overline{D_a}\setminus B_r(0)}u_a
>0.
\]
By Lemma~\ref{smooth-convergence},
$u_{a,\epsilon}(0)\to u_a(0)$ as $\epsilon\to0$. Hence there exists
$\epsilon_2(a)>0$ such that
\[
u_{a,\epsilon}(0)
>
u_a(0)-\frac{\gamma_a}{2}
\quad
\text{for every }0<\epsilon<\epsilon_2(a).
\]
On the other hand, Lemma~\ref{Lem2.2} gives
$u_{a,\epsilon}\le u_a$ in $\Omega_{a,\epsilon}$. Therefore, for
$z\in\Omega_{a,\epsilon}\setminus B_r(0)$,
\[
u_{a,\epsilon}(z)
\le u_a(z)
\le u_a(0)-\gamma_a
<
u_{a,\epsilon}(0).
\]
Set
\[
\epsilon(a)
:=
\min\big\{\epsilon_0(a),\epsilon_1(a),\epsilon_2(a)\big\},
\]
where $\epsilon_0(a)$ is  as in \ref{C4}.
Then, for every $0<\epsilon<\epsilon(a)$, the preceding estimates
show that the origin is the unique global maximizer of
$u_{a,\epsilon}$ in $\Omega_{a,\epsilon}$.
Finally, \eqref{D^2} follows from the local $C^2$ convergence in
Lemma~\ref{smooth-convergence}.
\end{proof}

\begin{proof}[\bf{Proof of  Theorem \ref{Thm1.1}}] 
 
By Proposition \ref{fixed-a-stability},  for any fixed $a\in ({a}^{*} ,1)$, there exists $ \epsilon(a)$ such that for all $\displaystyle \epsilon \in \big(0,\epsilon(a)\big)$, the origin  is a unique global maximizer of $u_{a,\epsilon}$ in $\Omega_{a,\epsilon}$, and  
$\displaystyle D^2u_{a,\epsilon}(0)\to D^2u_a(0)$.
Therefore, 
$$\displaystyle \lim_{\epsilon\to 0}\Lambda_\epsilon(a)
=
\Lambda(a), \quad
\text{where } \Lambda_\epsilon(a)
:=
\lambda_{\max}\bigl(D^2u_{a,\epsilon}(0)\bigr).$$
Since $a\in ({a}^{*} ,1)$ and $\displaystyle \epsilon \in \big(0,\epsilon(a)\big)$, Proposition \ref{fixed-a-stability} gives $\Lambda_\epsilon(a)\le0$.   On the other hand, continuity and \eqref{critical-lambda-zero} yield  $\displaystyle \lim_{a\downarrow a^*}\Lambda(a)=0$.
Hence,
\[
\lim_{a\downarrow a^*}
\lim_{\epsilon \to 0}
\Lambda_\epsilon(a)
=
\lim_{a\downarrow a^*}
\Lambda(a)
=
0.
\]
Set  $z_*:=\left(0,1/2\right)$. Then $\overline{B_{1/4}(z_*)} \subset \subset D_a$ for any $a\in 
(0,1)$.  Indeed, if \(z=(x,y)\in \overline{B_{1/4}(z_*)}\), then
$
y\ge 1/4>0
$. Since the slit \(S_a\) is contained in the horizontal axis
\(\{y=0\}\), it follows that $z\notin S_a$. Moreover, we have
\[
|z|
\le |z_*|+|z-z_*|
\le \frac12+\frac14
=\frac34<1,
\] and therefore \(z\in B_1(0)\). Thus $\overline{B_{1/4}(z_*)} \subset \subset D_a$.
For each fixed $a \in (0,1)$, by \ref{C4}, the family $\{\Omega_{a,\epsilon}\}_{\epsilon>0}$ is increasing as
$\epsilon\to0$ and exhausts $D_a$. Since $\overline{B_{1/4}(z_*)}$ is compact,
there exists $\bar\epsilon(a)>0$ such that $\overline{B_{1/4}(z_*)}\subset\Omega_{a,\epsilon}$ holds for any $0<\epsilon<\bar\epsilon(a)$.
 We can assume that $0<\epsilon_0(a)<\bar\epsilon(a)$. Hence $\operatorname{inrad}(\Omega_{a,\epsilon})
\ge 1/4$ holds for any $0<\epsilon<\epsilon_0(a)$. Note that $\operatorname{diam}(\Omega_{a,\epsilon})
\le
\operatorname{diam}(B)
=2$. 
Therefore \eqref{inrad} holds.
\end{proof}

\appendix         
\section{Construction of domains satisfying \texorpdfstring{\eqref{quyuomiga}}{(quyuomiga)} and \texorpdfstring{\ref{C1}--\ref{C6}}{(C1)--(C6)}}\label{A}
\setcounter{equation}{0}
\renewcommand{\theequation}{A.\arabic{equation}}
\setcounter{equation}{0}
We  construct a cutoff function and then use it to define domains satisfying all assumptions.
 \begin{Lem}
\label{adapted-cutoff}
For every $\delta>0$, there exists an even function
$\phi_\delta\in C_c^\infty(\mathbb R)$ with
$\operatorname{supp}\phi_\delta\subset\subset(-1,1)$, such that $0\leq \phi_\delta\leq 1$, $\phi_\delta(0)=1$, 
\begin{equation}
\label{eq:cutoff-differential}
\phi_\delta'(s)\leq 0 \quad \text{and} \quad
\phi_\delta(s)-s\phi_\delta'(s)
\leq 1+\delta \quad\text{for }0\leq s \leq 1.
\end{equation}
\end{Lem}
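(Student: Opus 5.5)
We need an even smooth compactly supported function with $\phi(0)=1$, nonincreasing on $[0,1]$, and $\phi-s\phi'\le 1+\delta$. Since $\phi'\le0$ on $[0,1]$, we have $-s\phi'\ge0$. The constraint therefore says the drop from $1$ must happen slowly relative to $s$. Equivalently, $(\phi(s)/s)' = (s\phi'-\phi)/s^2$, so the condition is $\frac{d}{ds}(\phi/s)\ge -(1+\delta)/s^2 = \frac{d}{ds}\big((1+\delta)/s\big)$. This means $(1+\delta)/s-\phi(s)/s$ is nonincreasing. The model case is a function that equals $1$ near $0$ and then decreases with $|s\phi'|\le\delta$ wherever $\phi\le 1$. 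Using a logarithmic profile gives $s\phi'(s)=-\delta$ exactly, and a log profile drops by $\delta\log(b/a)$ over $[a,b]$. To drop by $1$ we need $\log(b/a)=1/\delta$, i.e.\ $a=b e^{-1/\delta}$. Since $b<1$, this is always possible, though $a$ is tiny.

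\textbf{Construction.}
1. Fix $b=1/2$ and $a=\tfrac12 e^{-2/\delta}$. Define a continuous model
\[
\psi(s)=\begin{cases}1,&|s|\le a,\\ 1-\tfrac{\delta}{2}\log(|s|/a),& a\le|s|\le b,\\ 0,&|s|\ge b.\end{cases}
\]
Then $\psi(b)=1-\tfrac{\delta}{2}\cdot\tfrac2\delta=0$, and $s\psi'=-\delta/2$ on $(a,b)$.

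2. Smooth it. Write $\psi=h(\log s)$ with $h$ piecewise linear and slope in $[-\delta/2,0]$. Mollify $h$ in the variable $t=\log s$ with a compactly supported nonnegative mollifier of small width. Mollifying a nonincreasing function with slopes in $[-\delta/2,0]$ gives a smooth nonincreasing function with slopes in $[-\delta/2,0]$. It stays equal to $1$ for $t$ near $-\infty$ and equal to $0$ for $t\ge \log b+\text{width}<0$. Set $\phi_\delta(s)=\tilde h(\log|s|)$ for $s\neq0$ and $\phi_\delta(0)=1$. This is smooth and even, since it is identically $1$ near $0$. It takes values in $[0,1]$, and its support lies in $[-b',b']\subset\subset(-1,1)$.

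3. Verify the inequality. On $[0,1]$ we have $s\phi_\delta'(s)=\tilde h'(\log s)\in[-\delta/2,0]$, so $\phi'\le0$. Moreover $\phi-s\phi'\le 1+\delta/2\le1+\delta$.

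The only real point is choosing the log profile so that $s\phi'$ stays bounded by $\delta$. Mollifying in logarithmic coordinates preserves this bound, and I expect no other obstacle.
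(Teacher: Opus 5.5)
Your proof is correct, but it takes a different route from the paper's.

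\textbf{The paper's construction.} It builds an almost linear profile. It integrates a smooth plateau function $w_\tau$, which equals $s/\tau$ near $0$, equals $1$ on $[\tau,1-\tau]$, and vanishes near $1$, to get $\phi_\delta(s)=C_\tau^{-1}\int_s^1 w_\tau$, so that $\phi_\delta\approx 1-|s|$. It then uses the identity $\phi_\delta-s\phi_\delta'=1+C_\tau^{-1}\bigl(sw_\tau(s)-\int_0^s w_\tau\bigr)$. The linear profile contributes exactly $1$, since the tangent-line intercept of $1-s$ is $1$. The defect is bounded by $2\tau$ via a three-case check. Smoothness at the origin comes from the explicit quadratic cap $1-s^2/(2\tau C_\tau)$.

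\textbf{Your construction.} You instead make $s\phi'$ itself small, using a logarithmic profile with $s\phi'\in[-\delta/2,0]$. The inequality then follows at once from $\phi\le 1$. Mollifying in $t=\log|s|$ preserves the slope bound, and the plateau $\phi\equiv 1$ near $0$ gives smoothness and evenness for free.

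\textbf{What each buys.} Your argument is more transparent and needs no error bookkeeping. The price is that your plateau has width of order $e^{-2/\delta}$, and $|\phi'|$ reaches size of order $\delta e^{2/\delta}$ at its edge. The paper's function instead satisfies $|\phi_\delta'|\le 1/C_\tau\le 1/(1-2\tau)$ uniformly, and its support nearly fills $(-1,1)$. The later construction of $\Omega_{a,\epsilon}$ only uses the properties listed in the lemma, so either function works there.
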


\begin{proof}
Choose $ \tau\in(0,\frac{1}{4})$ sufficiently small so that
$ \frac{2\tau}{1-2\tau}\leq \delta$. Choose $\displaystyle w_\tau\in C^\infty\big([0,1]\big)$ such that $0\leq w_\tau\leq1$ and
\begin{equation}\label{wtau}
    w_\tau(s)=\frac{s}{\tau}
\; \;\left(0\leq s\leq\frac{\tau}{4}\right), \quad
w_\tau(s)=1\; \;(\tau\leq s\leq1-\tau),
\quad
w_\tau(s)=0
\; \; \left(1-\frac{\tau}{2}\leq s\leq1\right).
\end{equation}
Let $\displaystyle C_\tau:=\int_0^1 w_\tau(r)\,dr.$
Since $w_\tau=1$ on $[\tau,1-\tau]$, we have $C_\tau\geq1-2\tau>0$.
For $s\in[0,1]$, define $$\displaystyle \phi_\delta(s)
:=
\frac{1}{C_\tau}\int_s^1 w_\tau(r)\,dr.$$
Extend $\phi_\delta$ evenly to $(-1,1)$ and   extend it by zero
outside $(-1,1)$.
For $0\leq s\leq \tau/4$, \eqref{wtau} gives
$\phi_\delta(s)
=
1-\frac{s^2}{2\tau C_\tau}$.
Hence the even extension is smooth at the origin. Since $\phi_\delta$ vanishes identically in neighborhoods of $\pm1$, its extension by zero belongs to $C_c^\infty(\mathbb R)$ and $\operatorname{supp}\phi_\delta\subset\subset(-1,1)$.
By construction, $\phi_\delta(0)=1$ and $0\leq\phi_\delta\leq1$. Moreover, $\phi_\delta'(s)=-\frac{w_\tau(s)}{C_\tau}\leq0$.
It remains to prove \eqref{eq:cutoff-differential}.  For
$0\leq s\leq1$,
\begin{equation}\label{eq:eta-computation}
  \displaystyle  \phi_\delta(s)-s\phi_\delta'(s)=
\frac{1}{C_\tau}
\left(
\int_s^1 w_\tau(r)\,dr+s w_\tau(s)
\right)
=
1+
\frac{
s w_\tau(s)-\int_0^s w_\tau(r)\,dr
}{C_\tau}.
\end{equation}
We claim that
\begin{equation}\label{eq:w-error}
s w_\tau(s)-\int_0^s w_\tau(r)\,dr
\leq2\tau,
\quad 0\leq s\leq1.
\end{equation}
Indeed, if $0\leq s\leq\tau$, the left-hand side is at
most $s\leq\tau$. If $\tau\leq s\leq1-\tau$, then
$w_\tau(s)=1$ and 
\[
s w_\tau(s)-\int_0^s w_\tau(r)\,dr
\leq
s-\int_\tau^s1\,dr
=\tau.
\]
If $1-\tau\leq s\leq1$, 
\[
s w_\tau(s)-\int_0^s w_\tau(r)\,dr
\leq
s-\int_\tau^{1-\tau}1\,dr
\leq2\tau.
\]
Combining
\eqref{eq:eta-computation}, \eqref{eq:w-error}, and
$C_\tau\geq1-2\tau>0$, we obtain
\[
\phi_\delta(s)-s\phi_\delta'(s)
\leq
1+\frac{2\tau}{C_\tau}
\leq
1+\frac{2\tau}{1-2\tau}
\leq1+\delta.
\]
\end{proof}

Take $\delta=\frac{a}{4}$ in Lemma \ref{adapted-cutoff}  and denote the corresponding cutoff function by $\phi_a$. Then $\displaystyle \phi_a\in C_c^\infty\big((-1,1)\big)$, $\phi_a(-s)=\phi_a(s)$, $0\leq\phi_a\leq1$,  $\phi_a(0)=1$ and 
\begin{equation*}
\phi_a'(s)\leq0, \quad
\phi_a(s)-s\phi_a'(s)
\leq1+\frac a4,
\quad \text{for all } s \in [0,1].
\end{equation*}
Define $b(y):=\sqrt{1-y^2}$ for $|y|<1$ and set $ x_a:=\frac{1+a}{2}$. Choose $\eps_0(a)>0$ so that
\begin{equation}
\label{eps0-choice}
0<\eps_0(a)
<
\min\left\{
 a/4,\,
{\sqrt{1-x_a^2}}/4
\right\}.
\end{equation}
For $0<\eps<\eps_0(a)$, let
\begin{equation}
\label{eq:rho-definition}
\rho_{a,\eps}(y)
:=
b(y)
-
(1-a+\eps)
\phi_a\left(\frac{y}{\eps}\right),
\quad |y|<1,
\end{equation}
where $\phi_a$ is extended by zero outside $(-1,1)$.
Define the closed right-hand slot by
\begin{equation*}
\overline{T^+_{a,\eps}}
:=
\left\{
(x,y)\in\overline B:
|y|\leq\eps,\quad
\rho_{a,\eps}(y)\leq x\leq b(y)
\right\},
\end{equation*}
set $\displaystyle \overline{T^-_{a,\eps}}
:=
\big\{(-x,y):(x,y)\in \overline{T^+_{a,\eps}}\big\}$, $\overline{T_{a,\eps}}
:=
\overline{T^+_{a,\eps}}\cup \overline{T^-_{a,\eps}}$, and define $$\Omega_{a,\eps}
:=
B\setminus \overline{T_{a,\eps}}.$$
 If $|y|\geq\eps$, then $ \phi_a\left(\frac{y}{\eps}\right)=0$
and  $\rho_{a,\eps}(y)=b(y)>0$.
If $|y|\leq\eps$, then $0\leq\phi_a\leq1$ and
$\sqrt{1-y^2}\geq1-y^2$. Thus 
\begin{equation}\label{A10}
    \rho_{a,\eps}(y)\geq
1-\eps^2-(1-a+\eps)
=
a-\eps-\eps^2.
\end{equation}
Since $\displaystyle \epsilon \in \big(0,\eps_0(a)\big)$,  \eqref{eps0-choice} gives $\eps<a/4\le 1/4$ and hence $\eps^2\leq{\eps}/{4}< a/{16}$. Consequently, \eqref{A10} yields
$\rho_{a,\eps}(y)
\geq
\frac{11a}{16}>0$. Thus $\rho_{a,\eps}$ is positive on $(-1,1)$. By the definition of $\Omega_{a,\eps}$, we have
\begin{equation}
\label{eq:Omega-cross-section}
\Omega_{a,\eps}
=
\left\{
(x,y)\in\R^2:
|y|<1,\quad
-\rho_{a,\eps}(y)<x<\rho_{a,\eps}(y)
\right\}.
\end{equation}
\noindent \textbf{Verification of \ref{C1}, \ref{C2}, and \ref{C5}:} 
If $\eps\leq y<1$,
then $ yb'(y)-b(y)
=
-\frac{1}{\sqrt{1-y^2}}<0$.
If  $0<y<\eps$, then $s:=\frac{y}{\eps}\in(0,1)$.
Hence \eqref{eq:rho-definition} gives \begin{equation}\label{A11}
    y\rho_{a,\eps}'(y)-\rho_{a,\eps}(y)
=
-\frac{1}{\sqrt{1-y^2}}
+
(1-a+\eps)
\bigl(\phi_a(s)-s\phi_a'(s)\bigr).
\end{equation}
By  \eqref{eps0-choice}, we have $1-a+\eps
\leq 1-\frac{3a}{4}$.
Therefore, 
\begin{equation}\label{A12}
(1-a+\eps)
\bigl(\phi_a(s)-s\phi_a'(s)\bigr)
\leq
\left(1-\frac{3a}{4}\right)
\left(1+\frac a4\right)
=
1-\frac a2-\frac{3a^2}{16}
<1.
\end{equation}
Combining \eqref{A11} and \eqref{A12} yields
\[y\rho_{a,\eps}'(y)-\rho_{a,\eps}(y)
\leq
-\frac a2-\frac{3a^2}{16}<0.\]
Hence, for $0<t<1$ and $0<y<1$, $\rho_{a,\eps}(ty)
>
t\rho_{a,\eps}(y)$.
By evenness, $\rho_{a,\eps}(ty)
>
t\rho_{a,\eps}(y)$ also holds with $y$ replaced by
$|y|$. Therefore,
if $(x,y)\in\overline{\Omega}_{a,\eps}$ and  $0<t<1$,
then 
\[
\displaystyle |tx|
\leq
t\rho_{a,\eps}\big(|y|\big)
<
\rho_{a,\eps}\big(t|y|\big) \quad \text{for} ~0<|y|<1, \quad \text{and thus}~ (tx,ty)\in\Omega_{a,\eps}.\]
The cases $y=0$ and $(x,y)=(0,\pm1)$ are immediate. Thus,  for any $0\leq t<1$, $t\overline{\Omega}_{a,\eps}\subset\Omega_{a,\eps}$, 
which proves \ref{C5}. In particular,
$\Omega_{a,\eps}$ is contractible and hence simply connected. 

We next verify that $\Omega_{a,\epsilon}$ is nonconvex. Recall that $ x_a=\frac{1+a}{2}$.
\eqref{eps0-choice} gives $ 4\eps^2
<
\frac{1-x_a^2}{4}$
and hence $x_a^2+4\eps^2<1$.
Thus the two points $P_\pm:=(x_a,\pm2\eps)$
belong to $B$. Since $|2\eps|>\eps$, the cutoff function vanishes at
$y=\pm2\eps$, and therefore $\rho_{a,\eps}(\pm2\eps)=b(\pm2\eps)>x_a$.
It follows that $P_+,P_-\in\Omega_{a,\eps}$.
On the other hand, we have $\rho_{a,\eps}(0)=a-\eps<x_a$, 
so their midpoint $ \frac{P_++P_-}{2}=(x_a,0)$
does not belong to $\Omega_{a,\eps}$. Hence
$\Omega_{a,\eps}$ is nonconvex. Since $\operatorname{supp}\phi_a\subset\subset(-1,1)$, the perturbation term $\phi_a\left(\frac{y}{\eps}\right)$
vanishes in neighborhoods of $y=\pm\eps$. Hence
$\rho_{a,\eps}$ agrees with
$b(y)=\sqrt{1-y^2}$ near these points and therefore belongs to
$C^\infty$ on $(-1,1)$.
The boundary consists of the two curves
\[
\Gamma^\pm_{a,\eps}
:=
\left\{
\bigl(\pm\rho_{a,\eps}(y),y\bigr):
-1<y<1
\right\}.
\]
By \eqref{A10}, the two curves do not intersect in
$-1<y<1$.  Near the points $(0,\pm1)$, the cutoff term vanishes and
$\partial\Omega_{a,\epsilon}$ agrees locally with the unit circle. Hence,
$\partial\Omega_{a,\eps}$ is a $C^\infty$ embedded Jordan curve
and $\Omega_{a,\eps}$ is a bounded $C^\infty$ domain. The symmetry of $\Omega_{a,\eps}$ with respect to both coordinate axes follows immediately from the evenness of $\rho_{a,\eps}$ and the definition of $\Omega_{a,\eps}$.
This proves \ref{C1}. Furthermore, since $\rho_{a,\eps}(0)
=
1-(1-a+\eps)
=
a-\eps<a$,
we have $S_a\subset \overline{T_{a,\eps}}$, proving \ref{C2}.
\vskip0.15cm
\noindent \textbf{Verification of \ref{C3}:} By \ref{C2}, we have $\overline{S_a}\subset \overline{T_{a,\eps}}$,
and therefore $\displaystyle \sup_{p\in\overline{S_a}}
    \dist\bigl(p,\overline{T_{a,\eps}}\bigr)=0$.
It remains to estimate the distance in the opposite direction.
Let $z=(x,y)\in\overline{T^+_{a,\eps}}$. Then
$|y|\le \eps$ and $x\ge \rho_{a,\eps}(y)$, so that \eqref{A10} yields    $x\ge a-\eps-\eps^2$.
If $x\ge a$, then $(x,0)\in\overline{S_a}$, and hence
   $\displaystyle  \dist\bigl(z,\overline{S_a}\bigr)
    \le |y|
    \le \eps$.
If $x<a$, then    $0<a-x\le \eps+\eps^2$.
Since $(a,0)\in\overline{S_a}$, 
\[
    \dist\bigl(z,\overline{S_a}\bigr)
    \le |z-(a,0)|
    \le \sqrt{(\eps+\eps^2)^2+\eps^2}
    \le 2\eps,
\]
where we used $\eps<1/4$. By symmetry, the same estimate holds for
$z\in\overline{T^-_{a,\eps}}$. Consequently,
$
    \sup\limits_{z\in\overline{T_{a,\eps}}}
    \dist\bigl(z,\overline{S_a}\bigr)
    \le 2\eps.
$
Thus
$d_H\bigl(\overline{T_{a,\eps}},\overline{S_a}\bigr)
    \le 2\eps\to0.$
\vskip0.15cm
\noindent \textbf{Verification of \ref{C4}:}  Let $0<\eps_1<\eps_2<\eps_0(a)$. Fix $y\in(-1,1)$ and set $$s_i:=\frac{|y|}{\eps_i}, \quad  A_i:=1-a+\eps_i, \quad  \text{for}~~ i=1,2.$$
Since $\phi_a$ is nonincreasing on $[0,\infty)$, 
$\phi_a(s_2)\geq\phi_a(s_1)$.
Moreover, $A_2>A_1>0$ yields
\begin{equation*}
    A_2\phi_a(s_2)-A_1\phi_a(s_1)
=
(A_2-A_1)\phi_a(s_2)
+
A_1\bigl(\phi_a(s_2)-\phi_a(s_1)\bigr)
\geq0,
\end{equation*}
thus $\rho_{a,\eps_2}(y)
\leq
\rho_{a,\eps_1}(y)$ for all $|y|<1$. \eqref{eq:Omega-cross-section} gives $\Omega_{a,\eps_2}
\subset
\Omega_{a,\eps_1}$, and therefore
$\displaystyle \bigcup_{0<\eps<\eps_0(a)}
\Omega_{a,\eps}
\subset D_a.$
Let $(x,y)\in D_a$.
If $y\neq0$, choose $\displaystyle \epsilon \in \big(0,\min\{\eps_0(a),|y|\}\big)$.
Then $|y|/\eps>1$, and  $\phi_a\left(\frac{y}{\eps}\right)=0$.
Therefore $\rho_{a,\eps}(y)=b(y)$.
Since $(x,y)\in B$, $|x|<b(y)$, and thus $(x,y)\in\Omega_{a,\eps}$.
If $y=0$, then $(x,0)\in D_a$ implies $|x|<a$. Choose $\displaystyle \epsilon \in \big(0,\min\{\eps_0(a),a-|x|\}\big)$.
Then \[|x|<a-\eps=\rho_{a,\eps}(0) \quad \text{and} \quad
 (x,0)\in\Omega_{a,\eps}.\]
Thus $\displaystyle D_a
\subset
\bigcup_{0<\eps<\eps_0(a)}
\Omega_{a,\eps}$. Together with the opposite inclusion proved above, this proves 
\ref{C4}.
\vskip0.15cm
\noindent \textbf{Verification of \ref{C6}:}  For each fixed $y\in(-1,1)$, the horizontal
section of $\Omega_{a,\epsilon}$ is the interval
\[
\bigl\{x\in\mathbb R:(x,y)\in\Omega_{a,\epsilon}\bigr\}
=
\bigl(-\rho_{a,\epsilon}(y),\rho_{a,\epsilon}(y)\bigr).
\]
Hence, if $(x_1,y),(x_2,y)\in\Omega_{a,\epsilon}$ and $t\in[0,1]$,
then
\[
-\rho_{a,\epsilon}(y)
<
(1-t)x_1+tx_2
<
\rho_{a,\epsilon}(y).
\]
Therefore, $\bigl((1-t)x_1+tx_2,y\bigr)\in\Omega_{a,\epsilon}$,
and $\Omega_{a,\epsilon}$ is convex in the $x$-direction. This proves
\ref{C6}.

 \vskip0.2cm

\section{Proof of  Lemma \ref{capacity} }\label{B}
\setcounter{equation}{0}
\renewcommand{\theequation}{B.\arabic{equation}}
\setcounter{equation}{0}

\begin{proof}[\bf{Proof of  Lemma \ref{capacity}}]
We use the planar Wiener criterion formulated in terms of relative variational 2-capacity. For the relevant capacity theory and the Wiener criterion, see \cite{Mazya2015}. If $K \subset \subset G$ is compact, set
\[
\operatorname{Cap}_2(K, G) := \inf\left\{ \int_G |\nabla \psi|^2 \, dx : \psi \in C_c^{\infty}(G), \, \psi \geq 1 \text{ in a neighborhood of } K \right\}.
\] In two dimensions, the relative 2-capacity is scale invariant:
\begin{equation}\label{36}
    \operatorname{Cap}_2(q + rK, q + rG) = \operatorname{Cap}_2(K, G).
\end{equation}
Every point of $ \partial  B$, including $(\pm1,0)$, satisfies an exterior ball condition and 
is therefore regular. It remains to consider
\[
q \in S_a = \{(x, 0) : a \leq |x| < 1\}.
\]
Fix such a point. There exist $r_q > 0$, a unit vector $e_q$ tangent to the slit, and a number $\kappa \in (0, 1/2]$ such that, for every $0 < r < r_q$, we have
\begin{equation}\label{37}
    L_{q, r} := \{ q + t e_q : 0 \leq t \leq \kappa r \} \subset (\mathbb{R}^2 \setminus D_a) \cap \overline{B_r(q)}.
\end{equation}
At either inner slit endpoint $q = (\pm a, 0)$, choose the segment in the outward direction along the
slit. At an interior slit point, either tangential direction is admissible.
We next show that a nondegenerate line segment has positive relative 2-capacity. Let
\[
L := \{ t e_1 : 0 \leq t \leq \kappa \} \subset \subset B_2(0).
\]
Choose a fixed Lipschitz rectangle $Q \subset \subset B_2(0)$ lying on one side of $L$ and having $L$ as a subsegment of one of its sides. The trace inequality on $Q$ and  Poincar\'e's inequality on $B_2$ give 
\begin{equation}\label{38}
    \int_L |\psi|^2 \, d\mathcal{H}^1 \leq C_Q \|\psi\|_{H^1(Q)}^2 \leq C \int_{B_2(0)} |\nabla \psi|^2 \, dx \quad \text{for any } \psi \in C_c^{\infty}(B_2(0)),
\end{equation}
and see \cite{grisvard2011elliptic} for the standard trace results. If $\psi \geq 1$ in a neighborhood of $L$, then the left-hand side of \eqref{38} is at least $\mathcal{H}^1(L) = \kappa$. Consequently,
\begin{equation*}
    c_0 := \operatorname{Cap}_2\big(L, B_2(0)\big) \geq \frac{\kappa}{C} > 0. 
\end{equation*}
By monotonicity of capacity, \eqref{37}, rotation invariance, and \eqref{36},
\begin{equation}\label{40}    
\operatorname{Cap}_2\big( (\mathbb{R}^2 \setminus D_a\big) \cap \overline{B_r(q)}, B_{2r}(q) \big) \geq \operatorname{Cap}_2\big(L_{q, r}, B_{2r}(q)\big) 
= \operatorname{Cap}_2\big(L, B_2(0)\big) = c_0
\end{equation}
for every $0 < r < r_q$.
For completeness, we recall the following normalized discrete form of the planar Wiener
criterion (see \cite{Mazya2015}). Set
$E_j := (\mathbb{R}^2 \setminus D_a) \cap \overline{B_{2^{-j}}(q)}$.
A boundary point $q$ is regular if and only if
\begin{equation}\label{41}
    \sum_{j = j_0}^{\infty} \frac{\operatorname{Cap}_2\big(E_j, B_{2^{1-j}}(q)\big)}{\operatorname{Cap}_2\big(\overline{B_{2^{-j}}(q)}, B_{2^{1-j}}(q)\big)} = +\infty.
\end{equation}
The denominator is independent of $j$. Indeed, the radial capacitary potential of the condenser $\displaystyle \big(\overline{B_r(q)}, B_{2r}(q)\big)$ is
\[
\Psi_{q, r}(x) = \begin{cases}
1, & |x - q| \leq r, \\
\frac{\log\big(2r / |x - q|\big)}{\log 2}, & r < |x - q| < 2r, \\
0, & |x - q| \geq 2r,
\end{cases}
\]
and radial minimization gives
\begin{equation}\label{42}
    \operatorname{Cap}_2\left( \overline{B_r(q)}, B_{2r}(q) \right) = \int_{B_{2r}(q) \setminus B_r(q)} |\nabla \Psi_{q, r}|^2 \, dx = \frac{2\pi}{\log 2}.
\end{equation}
Choose $j_0$ so that $2^{-j_0} < r_q$. By \eqref{40} and \eqref{42}, every summand in \eqref{41} is bounded below by $c_0 \log 2 / (2\pi) > 0$. Hence the Wiener series diverges, and every slit point is regular. Thus every point of $\partial D_a$ is regular.
It remains to identify the boundary values of the torsion function. Let $h_a := U - u_a$, where $U$ is defined in \eqref{UU}.
Then $h_a$ is harmonic in $D_a$ and $h_a - U = -u_a \in H_0^1(D_a)$. 
Thus $h_a$ is the variational solution of the harmonic Dirichlet
problem in $D_a$ with boundary data $U|_{\partial D_a}$. By the
standard identification of the variational and Perron solutions for
continuous boundary data, $h_a$ coincides with the Perron solution
corresponding to $U|_{\partial D_a}$. Since every point of
$\partial D_a$ is regular for the Dirichlet problem, it follows that
\[
\lim_{D_a\ni x\to q}u_a(x)
=
\lim_{D_a\ni x\to q}\bigl(U(x)-h_a(x)\bigr)
=
U(q)-U(q)
=
0
\quad\text{for every }q\in\partial D_a.
\]
Since $u_a$ is smooth in $D_a$ by interior elliptic regularity, we
conclude that
\[
u_a\in C(\overline{D_a}),
\qquad
u_a=0\quad\text{on }\partial D_a.
\]
\end{proof}

\section*{Acknowledgments}
\vskip0.1cm
 P. Luo was supported by the National Key R\&D Program (No. 2023YFA1010002) and the National Natural Science Foundation of
China (Grant No. 12422106).
\vskip0.1cm
\noindent\textbf{Data availability statement:}  No data were generated or analyzed in this study.
\vskip0.1cm

\noindent\textbf{AI assistance statement:} The authors used OpenAI tools solely for language polishing and editorial assistance. The
authors independently verified all mathematical arguments, references, and conclusions
and take full responsibility for the content.

\end{document}